\numberwithin{equation}{section}
\newtheorem{theorem}{Theorem}[section]
\newtheorem{proposition}[theorem]{Proposition}
\newtheorem{claim}[theorem]{Claim}
\newtheorem{hyp}[theorem]{Assumption}
\newtheorem{example}{Example}[section]
\renewenvironment{proof}[1][Proof]{\begin{trivlist}
\item[\hskip \labelsep {\bfseries #1}]}{\qed\end{trivlist}}
\DeclareMathOperator{\sign}{\mathrm{sign}}
\newcommand{\ind}{\mathds{1}}
\renewcommand{\ge}{\geq}
\renewcommand{\le}{\leq}
\DeclareMathSymbol{\leqslant}{\mathalpha}{AMSa}{"36} 
\DeclareMathSymbol{\geqslant}{\mathalpha}{AMSa}{"3E} 
\DeclareMathSymbol{\eset}{\mathalpha}{AMSb}{"3F}     
\renewcommand{\leq}{\;\leqslant\;}                   
\renewcommand{\geq}{\;\geqslant\;}                   
\newcommand{\dd}{\,\text{\rm d}}             
\newcommand{\sumtwo}[2]{\sum_{\substack{#1 \\ #2}}} 
\newcommand{\bP}{{\ensuremath{\mathbf P}} }
\newcommand{\bE}{{\ensuremath{\mathbf E}} }
\newcommand{\bX}{{\ensuremath{\mathbf X}} }
\newcommand{\bb}{{\ensuremath{\mathbf b}} }
\newcommand{\bd}{{\ensuremath{\mathbf d}} }
\newcommand{\bdelta}{{\ensuremath{\boldsymbol \delta}} }
\newcommand{\bbN}{{\ensuremath{\mathbb N}} }
\newcommand{\bbP}{{\ensuremath{\mathbb P}} }
\newcommand{\bbR}{{\ensuremath{\mathbb R}} }
\newcommand{\bbZ}{{\ensuremath{\mathbb Z}} }
\newcommand{\ga}{\alpha}
\newcommand{\gb}{\beta}
\newcommand{\gd}{\delta}
\newcommand{\gep}{\varepsilon}       
\newcommand{\gp}{\varphi}
\def\captionfont@{\footnotesize}
\def\captionheadfont@{\scshape}
\long\def\@makecaption#1#2{%
  \vspace{2mm}
  \setbox\@tempboxa\vbox{\color@setgroup
    \advance\hsize-6pc\noindent
    \captionfont@\captionheadfont@#1\@xp\@ifnotempty\@xp
        {\@cdr#2\@nil}{.\captionfont@\upshape\enspace#2}%
    \unskip\kern-6pc\par
    \global\setbox\@ne\lastbox\color@endgroup}%
  \ifhbox\@ne 
    \setbox\@ne\hbox{\unhbox\@ne\unskip\unskip\unpenalty\unkern}%
  \fi
  \ifdim\wd\@tempboxa=\z@ 
    \setbox\@ne\hbox to\columnwidth{\hss\kern-6pc\box\@ne\hss}%
  \else 
    \setbox\@ne\vbox{\unvbox\@tempboxa\parskip\z@skip
        \noindent\unhbox\@ne\advance\hsize-6pc\par}%
\fi
  \ifnum\@tempcnta<68 
    \addvspace\abovecaptionskip
    \moveright 3pc\box\@ne
  \else 
    \moveright 3pc\box\@ne
    \nobreak
    \vskip\belowcaptionskip
  \fi
\relax
}
\def\writefig#1 #2 #3 {\rlap{\kern #1 truecm
\raise #2 truecm \hbox{#3}}}
\newcommand{\tif}{\text{ if }}
\newcommand{\x}{\mathbf{x}}
\newcommand{\y}{\mathbf{y}}
\newcommand{\z}{\mathbf{z}}
\newcommand{\bga}{\boldsymbol{\alpha}}
\newcommand{\bS}{\mathbf{S}}
\newcommand{\bZ}{\mathbf{Z}}
\begin{document}

\title[Multivariate strong renewal theorems and local large deviations]{Strong renewal theorems and local large deviations\\
 for multivariate random walks and renewals}
\author{Quentin Berger}

\address{Q. Berger, Sorbonne Universit\'e, LPSM\\
Campus Pierre et Marie Curie, case courrier 158\\
4 place Jussieu, 75252 Paris Cedex 5, France}
\email{quentin.berger@sorbonne-universite.fr}
\date{}

\begin{abstract}
We study a random walk $\mathbf{S}_n$ on $\mathbb{Z}^d$ ($d\geq 1$), in the domain of attraction of an operator-stable distribution with index $\boldsymbol{\alpha}=(\alpha_1,\ldots,\alpha_d) \in (0,2]^d$: in particular, we allow the scalings to be different along the different coordinates.  We prove a strong renewal theorem, \textit{i.e.}\ a sharp asymptotic of the Green function $G(\mathbf{0},\mathbf{x})$ as $\|\mathbf{x}\|\to +\infty$, along the ``favorite direction or scaling'': (i) if $\sum_{i=1}^d \alpha_i^{-1} < 2$ (reminiscent of Garsia-Lamperti's condition when $d=1$ \cite{cf:GL}); (ii) if a certain \emph{local} condition holds (reminiscent of Doney's \cite[Eq. (1.9)]{cf:Don97} when $d=1$).
We also provide uniform bounds on the Green function $G(\mathbf{0},\mathbf{x})$, sharpening estimates when $\mathbf{x}$ is away from this favorite direction or scaling.
These results improve significantly the existing literature, which was mostly concerned with the case $\ga_i\equiv \ga$, in the favorite scaling, and has even left aside the case $\ga\in[1,2)$ with non-zero mean.
Most of our estimates rely on new general (multivariate) local large deviations results, that were missing in the literature and that are of interest on their own.
\end{abstract}

\keywords{Multivariate random walks, strong renewal theorems, local large deviations}

\subjclass[2010]{
60G50, 
60K05, 
60F15, 
60F10. 
}

\thanks{The author acknowledges the support of grant ANR-17-CE40- 0032-02 of the French National Research Agency.}

\maketitle

\tableofcontents

\section{Setting of the paper}

\subsection{Multivariate random walks, domains of attraction}
We consider a \emph{$d$-dimensional} random walk $\bS = (\bS_n)_{n\geq 0}$: $\bS_0 = \mathbf{0}$, and  $\bS_n:=\sum_{j=1}^n \bX_j$, where $(\bX_j)_{j\geq 0}$ is an i.i.d.\ sequence of $\bbZ^d$-valued random variables (we treat only the case of a lattice distribution for the simplicity of exposition, but non-lattice counterparts should hold).
We assume that $\bX_1$ is non-defective, \textit{i.e.}\ $\bP(\|X_1\| <+\infty) = 1$ (let $\|\cdot\|$ denote the $L^1$ norm).
If $\bX_1\in \bbN^d$, we then call $\bS_n$ a \emph{multivariate renewal process}, and $\bS=\{\bS_0, \bS_1, \bS_2, \ldots\}$ is interpreted as a random subset of $\bbN^d$ (with a slight abuse of notations).

We assume that $\bS$ is aperiodic and in the domain of attraction of a non-degenerate multivariate stable distribution with index $\bga:=(\ga_1,\ldots,\ga_d) \in(0,2]^d$: there is a \emph{recentering sequence} $\mathbf{b}_n =(b_n^{(1)}, \ldots, b_n^{(d)})$ and \emph{scaling sequences} $a_n^{(1)}, \ldots, a_n^{(d)}$ such that, setting $A_n$ the diagonal matrix with $A_n(i,i)= a_n^{(i)}$, we have as $n\to +\infty$
\begin{equation}
\label{cf:stableconv}
A_n^{-1} (\bS_n -\mathbf{b}_n)= \Big(\frac{S_n^{(1)}- b_n^{(1)}}{a_n^{(1)}} , \ldots, \frac{S^{(d)}_n - b_n^{(d)}}{a_n^{(d)}} \Big)  \Rightarrow  \bZ \quad \text{ in distribution}.
\end{equation}
Here, $\bZ$ is a multivariate stable law, whose non-degenerate density is denoted $g_{\bga}(\x)$.
As in \cite{cf:RG79,cf:Don91, cf:Meer91}, we allow the scaling sequences to be different along different coordinates. 
The case where $a_n^{(i)} \equiv a_n$ for all $1\leq i\leq d$ (that is $A_n= a_n {\rm I}_d$) was considered by L\'evy \cite{cf:Levy} and Rvaceva \cite{cf:Rva62}, and will be referred to as the \emph{balanced} case.
We refer to Appendix~\ref{appA} for further discussion on \emph{generalized} domains of attractions (here we only consider the case where  $A_n$  is diagonal), and for a brief description of multivariate regular variation.



\subsection{First notations}

For every $i\in\{1,\ldots,d\}$, $S_n^{(i)}$ has to be in the domain of attraction of a $\ga_i$-stable distribution.
Let us set $F_i (x) := \bP(X_1^{(i)} \le x) $ and $\bar F_i (x) := \bP(X_1^{(i)} >x)$.

When $\ga_i \in(0,2)$, there exist some slowly varying function $L_i(\cdot)$, and constants $p_i,q_i \geq 0$ (with $p_i+q_i=1$) such that
\begin{equation}
\label{hyp:XY}
\bar F_i (x) \sim   p_i L_i(x) x^{-\ga_i}  \quad \text{and} \quad  F_i (-x) \sim   q_i L_i(x) x^{-\ga_i} \quad \text{ as } x\to+\infty\, ,
\end{equation}
 and when $p_i=0$ or $q_i=0$, we interpret this as $o(L_i(x)x^{-\ga_i})$. Note that \eqref{hyp:XY} is equivalent to $S_n^{(i)}$ being in the domain of attraction of an $\ga_i$-stable law, $\ga_i\in(0,2)$, see \cite[IX.8, Eq. (8.14)]{cf:Feller}.
When $\ga_i =2$, then we set
\begin{equation}
\label{def:sigmax}
\sigma_i(x):=\bE \big[ \big(X^{(i)}_1\big)^2 \ind_{\{| X_1^{(i)} | \leq x\}} \big] \, .
\end{equation}
By \cite[IX.8, Thm.~1]{cf:Feller}, having $\sigma_i(x)$ slowly varying is equivalent to $S_n^{(i)}$ being in the domain of attraction of the normal distribution.

The scaling sequence $a_n^{(i)}$ is then characterized by the following relation
\begin{equation}
\label{def:an}
\begin{split}
L_i(a_n^{(i)}) (a_n^{(i)})^{-\ga_i} &\sim  1/n \quad \text{as } n\to +\infty, \ \tif \ga_i\in(0,2) ;\\
\sigma_i(a_n^{(i)}) (a_n^{(i)})^{-2} &\sim 1/n  \quad \text{as } n\to +\infty, \  \tif \ga_i=2.
\end{split}
\end{equation}
Note that in any case, $a_n^{(i)}$ is regularly varying with exponent $1/\ga_i$.

Regarding the recentering sequences $b_n^{(i)}$,  we set (see \cite[IX.8, Eq.~(8.15)]{cf:Feller}): 
\begin{equation}
\label{def:bn}
b_n^{(i)} \equiv 0 \ \ \tif \ga_i\in(0,1) ; \quad b_n^{(i)} := n\mu_i \ \ \tif \ga_i>1 ; \quad b_n^{(i)}=n \mu_i(a_n^{(i)}) \ \ \tif \ga_i=1.
\end{equation}
We defined $\mu_i:=\bE[X^{(i)}_1]$ when $X_1^{(i)}$ is integrable, and $\mu_i(x) := \bE[ X_1^{(i)} \ind_{\{|X_1^{(i)}|\leq x\}}]$.

\subsection{Overview of the literature and of our results}

The main focus of our paper is the behavior of the Green's function $G(\mathbf{0},\x)=G(\x) := \sum_{n=1}^{+\infty} \bP(\bS_n = \x)$, as $\|\x\| \to+\infty$. 
The literature is vast in the case of dimension $d=1$, see e.g.\ \cite{cf:GL,cf:Erik,cf:Don97} or \cite{cf:CD16} for some landmarks. It has also been studied in a variety of papers in the case of dimension $d\geq 2$, but only in the balanced case ($\ga_i\equiv \ga$), and in some specific cases. Let us now present an overview of the conditions under which the asymptotic behavior of $G(\x)$ is known ($d\geq 2$):

\smallskip
$\ast$  In the case $\ga=2$ (Normal domain of attraction), with non-zero mean: with some moment conditions and along the correct angle $\x =(t, \lfloor t \mu_2/\mu_1\rfloor)$, see \cite{cf:NS} (this has been improved in \cite{cf:Don66} and \cite{cf:Stam69}),
with an exponential tail condition,  in a small cone around the mean vector, see~\cite{cf:CW84}. 
Some estimates away from the favorite directions are provided in \cite[Lem.~5]{cf:Nag80}, under a zero mean, finite variance condition.

$\ast$ For $\ga\in(0,2)$, in the \emph{centered} case (\textit{i.e.}\ $b_n^{(i)} \equiv 0$): if $d/2<\ga<2$ and along a given angle, see \cite{cf:Will68}; if $\ga\in(0,1)$ and along a given angle, with an additional \emph{local} condition, see \cite[Cor.~3.B]{cf:Will68}.
This has also been proven more recently in \cite{cf:C14} under an integro-local condition.
We also mention \cite[Prop.~26.1]{cf:Spitzer} and \cite{cf:U98} for simple moment conditions to obtain the asymptotic behavior of $G(\x)$, in the case $\ga=2$.

 \smallskip
 The contribution of the present paper is threefold:
 (i) we give the sharp behavior of $G(\x)$ in the case $\ga\in [1,2)$ with non-zero mean, in a cone around the mean vector (we call it favorite direction): this was missing in the literature---we also treat the case $\ga=1$ with infinite mean;
 (ii) we give uniform bounds on $G(\x)$, giving improved estimates when $\x$ is outside the favorite direction;
 (iii) we extend the results to the case of random walks in the domain of attraction of an operator stable distribution, allowing for different scalings along the different components (and we weaken Williamson's condition~\cite[Eq.~(3.10)]{cf:Will68} in the case $\ga\in(0,1)$).

As a central tool, we prove some multivariate local large deviations estimates, \textit{i.e.}\ we go beyond the local limit theorem in a large deviation regime. This is of its own interest since such estimates were missing in the literature, and appear central in controlling the small-$n$ contribution to $G(\x)$. We prove a local large deviation in the general setting, see Theorem~\ref{thm:locallimit1}. Then we propose a new (and natural) multivariate Assumption~\ref{hyp:2}, which extends Doney's condition \cite[Eq.~(1.9)]{cf:Don97} to the multivarate settind, and generalizes Williamson's condition \cite[Eq.~(3.10)]{cf:Will68}: we obtain a better local large deviation result under this assumption. 

\smallskip
Let us now give a brief overview on how the rest of the paper is organized. First, we present our local large deviations estimates and our Assumption~\ref{hyp:2} (that gives a sharper result), in Section~\ref{sec:locallimit}. In Section~\ref{sec:diag}, we state our strong renewal theorems (along the favorite direction or scaling), that we divide into three parts: the centered case, \textit{i.e.}\ when $\bb_n\equiv 0$; the non-zero mean case with $\ga_i>1$; the case $\ga_i=1$, that we set aside because it needs additional care.
In Section~\ref{sec:away}, we present the uniform bounds on $G(\x)$ (in dimension $d=2$ for simplicity).
The rest of the paper, Sections~\ref{sec:prooflocallimit} to \ref{sec:away}, is devoted to the proofs: Section~\ref{sec:prooflocallimit} for the local large deviations, Sections~\ref{sec:casI}-\ref{sec:casII}-\ref{sec:casIII} for the strong renewal theorems, and Section~\ref{sec:away} for the estimates when $\x$ is away from  the favorite direction or scaling.
Finally, we collect in the appendix some useful comments: in Appendix~\ref{appA}, we recall some definitions and results about multivariate regular variation and generalized domains of attraction;
in Appendix~\ref{appB}, we discuss further on our Assumption~\ref{hyp:2}.

\subsection{A general working assumption}

We assume in the rest of the paper, mostly for simplicity of notations, that the left and right tail distributions of $\bX_1$, $F_i(-x)$ and $\bar F_i(x)$, are dominated by subexponential distributions.

\begin{hyp}
\label{hyp:1}
There exists some slowly varying functions $(\gp_i)_{i\leq d}$, and some $\gamma_i\ge \alpha_i$ such that for all $x\in\bbN$ and  $i\in\{1,\ldots, d\}$
\begin{equation}
\label{hyp:XY2}
 F_i(-x) + \bar F_i(x) := \bP(X_1^{(i)} \le -x)  + \bP(X_1^{(i)}>x) \leq  \gp_i(x) x^{-\gamma_i} .
\end{equation}
When $\bE[(X^{(i)}_1)^2]=+\infty$, we may take $\gamma_i  =\ga_i$, and $\gp_i(\cdot)$ a constant multiplicative of $L_i(\cdot)$.\\
When $\bE[(X_1^{(i)})^2]<+\infty$, we may take $\gp_i(\cdot)$ and $\gamma_i$ such that $\sum_{n\geq 1} \gp_i(n) n^{1-\gamma_i} <+\infty$.
\end{hyp}

This assumption is essentially used to generalize \eqref{hyp:XY} to the case $\ga_i=2$: the exponent $\gamma_i$ gives further information on the left and right tail distribution. It does not appear to be a real restriction (components are allowed to have a much stronger tail, having formally $\gamma_i=\infty$), but is easier for presenting the results.
Also, we used the same exponent for the left and right tail distribution for simplicity, but all results can be adapted to the case of different tail behaviors.
A typical example we have in mind is when the distribution of $\bX_1$ is regularly varying in $\bbR^{d}$ with exponent $-(\gamma_1, \ldots, \gamma_d)$. We refer to Appendix~\ref{appA} for a definition of multivariate regular variation, see in particular~\eqref{def:regvarmeas}---we also present two examples (Examples~\ref{ex:indep}-\ref{ex:depend}) of distribution of $\bX_1$ we keep in mind.

\section{Local large deviations}
\label{sec:locallimit}

Let us start by stating the local limit theorem obtained by Griffin in \cite{cf:Grif} in our setting, and disentangled by Doney \cite{cf:Don91} (it is proven in dimension $2$, but as stressed by Doney its proof is valid in any dimension): uniformly for $\x =(x_1,\ldots,x_d)\in\bbZ^d$,
\begin{equation}
\label{LLT}
 a_n^{(1)} \cdots a_n^{(d)} \bP\big( \bS_n=\x \big) -  g_{\bga}\big( \x_n \big)  \to 0  \quad \text{as } n\to +\infty \, ,
\end{equation}
with $\x_n:= A_n^{-1} (\x - \mathbf{b}_n)=\big(\frac{x_1 -b_n^{(1)}}{a_n^{(1)}} ,\ldots, \frac{x_d -b_n^{(d)}}{a_n^{(d)}}\big) .$

Our first set of results  concerns local large deviation estimates, which improve~\eqref{LLT} in the case $\| \x_n\| \to +\infty$. But let us start by reviewing some of the existing literature. A great part of it focuses on the balanced case ($A_n = a_n {\rm I}_d$):
in \cite{cf:HLMS05}, large deviations are proven, and in \cite{cf:Zai99,cf:NZ05}, some sufficient conditions (that we do not detail here) are given to obtain a local limit theorem of the type $\bP(\bS_n\in A) \sim n \bP(\bX_1 \in A)$ ---the case $\ga=1$ is left aside. 
As far as the ``non-balanced'' case is concerned, we refer to  \cite[Ch. 9]{cf:MSbook} for large deviations estimates, see for example Theorem~9.1.3, where it is shown that $\bP \big( \langle \bS_n, \theta \rangle > x_n \big) $ is of the order of $n \bP \big( \langle \bX_1 ,\theta \rangle >  x_n \big)$ when in the domain of attraction of an operator stable distribution with no normal component.

To summarize, there exists no general result that would treat ``mixed'' Normal and stable cases, and that would give a good (and general) local large deviation, under a weak assumption. Our aim is therefore to provide simple local large deviation estimates, that will be a crucial tool  for our renewal results of Sections~\ref{sec:diag}-\ref{sec:resultsaway}.
 We also give an improved result below, under some more local assumption on the distribution of $\bX_1$. The proof of the local large deviation results are presented in Section~\ref{sec:prooflocallimit}.

\subsection{A first local limit theorem}

Let us denote $\hat \bS_n := \bS_n - \lfloor \mathbf{b}_n \rfloor$ the \emph{recentered} walk (we take the integer part of $\mathbf{b}_n$ simply so that $\hat \bS_n$ is still $\bbZ^d$ valued).
As far as a large deviation estimate is concerned, univariate large deviation estimates already give (we recall these results in Section~\ref{sec:univariate} below) that there is a constant $C_0$ such that for any $\x\geq 0$, 
\begin{equation}
\label{eq:largedevmulti}
\bP\big( \hat  \bS_n \ge \x \big)  \leq C_0 \min_{i\in\{1,\ldots d\}} \Big\{ n \gp\big(x_i \big) \, \big( x_i \big)^{-\gamma_i}   +  \exp\big( -   \tfrac{c x_i^2 }{n \sigma(x_i)} \big) \ind_{\{\ga_i= 2\}}\Big\}\, ,
\end{equation}
where the inequality $\hat \bS_n \ge \x $ is componentwise.
We now give a local version of it.

\begin{theorem}
\label{thm:locallimit1}
Assume that Assumption \ref{hyp:1} holds.
There exist constants $c_1,C_1$ such that for any fixed $i\in\{1,\ldots, d\}$ and $\x \in \bbZ$ with $|x_i|\ge a_n^{(i)}$, we have
\begin{equation}
\label{eq:localCD}
a_n^{(1)} \cdots a_n^{(d)} \times\bP\big( \hat \bS_n = \x \big) \leq  
C_1  n \gp_i ( |x_i| )\,  |x_i|^{-\gamma_i} +  C_1 \exp\big( -  \tfrac{c_1 |x_i|^2}{n \sigma_i(|x_i|)} \big)  \ind_{\{\ga_i=2\}}.
\end{equation}
\end{theorem}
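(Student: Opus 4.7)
My plan is to derive the local bound from the (non-local) univariate large deviation estimate~\eqref{eq:largedevmulti} via a splitting trick that decouples the local density estimate (which comes from the LLT~\eqref{LLT}) from the marginal large-deviation estimate on coordinate~$i$. Set $n_1:=\lfloor n/2\rfloor$, $n_2:=n-n_1$, and decompose $\bS_n=\bS^{(1)}_{n_1}+\bS^{(2)}_{n_2}$ as a sum of two independent partial sums; write $\hat\bS^{(j)}_{n_j}:=\bS^{(j)}_{n_j}-\lfloor\bb_{n_j}\rfloor$ for their recenterings. Convolving,
\begin{equation*}
\bP(\hat\bS_n=\x)=\sum_{\y\in\bbZ^d}\bP\big(\bS^{(1)}_{n_1}=\y\big)\,\bP\big(\bS^{(2)}_{n_2}=\x+\lfloor\bb_n\rfloor-\y\big).
\end{equation*}

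Looking only at the $i$-th coordinate, every $\y$ appearing in the sum satisfies $|(\hat\bS^{(1)}_{n_1})^{(i)}|\geq|x_i|/3$ or $|(\hat\bS^{(2)}_{n_2})^{(i)}|\geq|x_i|/3$ (I use the slack $|x_i|/3$ rather than $|x_i|/2$ in order to absorb the subadditivity error $b_n^{(i)}-b_{n_1}^{(i)}-b_{n_2}^{(i)}$, which vanishes identically for $\ga_i\neq 1$ and is $O(a_n^{(i)})=O(|x_i|)$ for $\ga_i=1$ by the slow variation of $\mu_i(\cdot)$). Splitting the convolution along these two alternatives and bounding the non-deviating factor by the uniform density bound coming from~\eqref{LLT}, namely $\sup_\y\bP(\bS^{(j)}_{n_j}=\y)\leq C/(a_{n_j}^{(1)}\cdots a_{n_j}^{(d)})\leq C'/(a_n^{(1)}\cdots a_n^{(d)})$ (this uses boundedness of $g_\bga$ and regular variation of each $a_n^{(k)}$), the summation over $\y$ collapses to a marginal tail probability and yields
\begin{equation*}
\bP(\hat\bS_n=\x)\leq\frac{2C'}{a_n^{(1)}\cdots a_n^{(d)}}\max_{j=1,2}\bP\Big(\big|(\hat\bS^{(j)}_{n_j})^{(i)}\big|\geq|x_i|/3\Big).
\end{equation*}

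It remains to apply the (two-sided) univariate Fuk--Nagaev estimate recalled in~\eqref{eq:largedevmulti}: since Assumption~\ref{hyp:1} bounds the left and right tails of $X_1^{(i)}$ symmetrically, using~\eqref{eq:largedevmulti} both for $\hat S_n^{(i)}$ and for its negative gives
\begin{equation*}
\bP\Big(\big|(\hat\bS^{(j)}_{n_j})^{(i)}\big|\geq|x_i|/3\Big)\leq C\Big[\, n\gp_i(|x_i|)|x_i|^{-\gamma_i}+\exp\!\big(-c\,|x_i|^2/(n\sigma_i(|x_i|))\big)\ind_{\{\ga_i=2\}}\Big],
\end{equation*}
where the replacements $n_j\to n$ and $|x_i|/3\to |x_i|$ have been absorbed via the regular variation of $\gp_i$ and the slow variation of $\sigma_i$. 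Plugging this back delivers~\eqref{eq:localCD}.

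The main technical point is the compatibility of the centering $\bb_n$ under the splitting $n=n_1+n_2$: for $\ga_i\neq 1$ the splitting is exact ($b_n^{(i)}$ is either $0$ or linear in $n$), whereas for $\ga_i=1$ the choice $b_n^{(i)}=n\mu_i(a_n^{(i)})$ is only subadditive up to an error $O(a_n^{(i)})$, which is controlled precisely by working with the threshold $|x_i|/3$ rather than $|x_i|/2$. A secondary point is ensuring that the uniform upper bound coming from~\eqref{LLT} is valid for $n_2\sim n/2$, which follows from the pointwise convergence of $a_{n_2}^{(1)}\cdots a_{n_2}^{(d)}\bP(\bS^{(2)}_{n_2}=\cdot)$ to the continuous bounded density $g_\bga$. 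Beyond these points, the entire argument is shifted onto the one-dimensional large deviation~\eqref{eq:largedevmulti}, whose proof (classical truncation combined with Bennett's inequality for the Gaussian $\ga_i=2$ contribution and the ``one-big-jump'' estimate for the heavy-tail contribution) is recalled separately in the paper.
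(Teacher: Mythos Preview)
Your proof is correct and follows the same approach as the paper: split $n$ into two halves, observe that at least one half must exhibit a large deviation in coordinate~$i$, bound the other half uniformly via the LLT, and conclude with the univariate large-deviation estimate~\eqref{eq:largedev}. The paper phrases the splitting slightly differently (comparing $S^{(i)}_{\lfloor n/2\rfloor}$ to $\tfrac12\lfloor b_n^{(i)}\rfloor$ rather than to $\lfloor b^{(i)}_{n_1}\rfloor$, and then invoking a separate claim that $\tfrac12\lfloor b_n^{(i)}\rfloor-b_{\lfloor n/2\rfloor}^{(i)}\geq -c\,a_n^{(i)}$), but the content is identical. One minor slip worth flagging: for $\ga_i=1$ the subadditivity error in the centering is $O(a_n^{(i)})$ with an \emph{uncontrolled} constant, so merely replacing the threshold $|x_i|/2$ by $|x_i|/3$ does not absorb it when $|x_i|$ is only barely $\geq a_n^{(i)}$; the paper handles this by first proving the bound for $|x_i|\geq C_4 a_n^{(i)}$ with $C_4$ large and then noting that the range $a_n^{(i)}\leq|x_i|<C_4 a_n^{(i)}$ is trivial since there $n\gp_i(|x_i|)|x_i|^{-\gamma_i}$ is bounded below by a positive constant.
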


\noindent
The idea of this result is similar to that of \cite[Theorem 1.1]{cf:CD16} for the univariate case (where only the case $\ga\in(0,1)\cup(1,2)$ is treated), and we give the details  in Section~\ref{sec:simplelocal}.

\subsection{A \emph{local} multivariate assumption for an improved local limit theorem}

In dimension $d=1$, better local large deviations can be  obtained under a \emph{local} assumption on the distribution of $\bX_1$, see \cite{cf:Don97} in the case $\ga\in(0,1)$ and \cite[Thm~2.7]{cf:B17} in the case $\ga\in (0,2)$.
We present here an assumption which can be thought as the analogous of Doney's condition \cite[Eq.~(1.9)]{cf:Don97} to the multivariate setting, and generalizes Williamson's condition \cite[Eq.~(3.10)]{cf:Will68}. We comment on that Assumption below.


\begin{hyp}
\label{hyp:2}
There exist a constant $\mathrm{C}_d$, slowly varying functions $(\varphi_i)_{1\leq i \leq d}$ and exponents $(\gamma_i)_{1\le i\le d}$ (the same as in Assumption \ref{hyp:1})  such that for any fixed~$i\in\{1,\ldots, d\}$
\begin{align}
\label{eq:hyp2}
\bP(\bX_1 = \x) &\leq  \frac{ {\rm C}_d \gp_i(|x_i|) \big( 1+|x_i| \big)^{-\gamma_i}}{ \prod_{j=1}^d (1+|x_j|)} \times \prod_{j\ne i} h^{(i)}_{|x_{i}|} (|x_j|) \, ,
\end{align}
where the functions $h^{(i)}_u(v)$  ($u,v\in \bbN$) for $i\in\{1,\ldots, d\}$ verify:
\begin{equation}
\label{cond:h}
\text{(i)} \   h^{(i)}_u(v) \le 1  \quad ; \quad \text{(ii)} \  \sup_{u\ge 0}  \sum_{v\ge 0} \frac{h^{(i)}_u(v)}{1+|v|} <+\infty \quad  ; \quad 
\text{(iii)} \  \sup_{u ,v \ge 0 \, ;\,  u'\in[u,2u]}  \frac{ h^{(i)}_{u'}(v)}{h^{(i)}_u(v)} <+\infty \, .
\end{equation}
\end{hyp}
First of all, we present two important examples that verify Assumption \ref{hyp:2}: they are local versions of Example~\ref{ex:indep} (independent case) and Example~\ref{ex:depend} (dependent case). 

\begin{example}
\label{ex1}
There are positive exponents $\gamma_i$ and slowly varying functions $\gp_i(\cdot)$ ($i\in\{1,\ldots,d\}$), such that
$\bP(\bX_1 = \x) = \prod_{i=1}^d \gp_i(x_i) \,  x_i^{-(1+\gamma_i)}$, for $\x\in \bbN^d$.
\end{example}

\begin{example}
\label{ex2}
There are positive exponents $\beta,(\beta_i)_{1\le i\le d}$ with $\gb> \sum_{i=1}^d \gb_i^{-1}$, and $\psi(\cdot)$ a slowly varying function, such that
$\bP(\bX_1= \x) =  \psi\big( \sum_{i=1}^d  x_i^{\beta_i}\big) \times  \big( \sum_{i=1}^d  x_i^{\beta_i}\big)^{-\beta}$, for $ \x\in\bbN^d$.\\
Assumption~\ref{hyp:2} is verified with $\gamma_i :=\gb_i \big( \gb - \sum_{i=1}^d \gb_i^{-1} \big)$, see Appendix~\ref{appB}.
\end{example}
\noindent
We mention that a two-dimensional, \emph{balanced}, version of Example~\ref{ex2} is used in~\cite{cf:GK} (it comes from the biophysics literature, see~\cite{cf:GO}):  the dimension is $d=2$, $\beta_i\equiv 1$, and $\beta=2+\alpha$, $\ga>0$.

\smallskip
Let us now give a general idea behind the choice of Assumption~\ref{hyp:2}---assume for simplicity that all $x_i$'s are positive.
We start with writing
\begin{align*}
\bP(\bX_1 = \x)  = \bP&\big( \bX_1 = \x \ \, \big|\,  X_1^{(i)} \in[x_i,2x_i] \ \forall i\in \{1,\ldots, d\} \big) \\
& \quad \times \bP\big(X_1^{(i)} \in [x_i, 2x_i] \big)  \times \bP\big( X_1^{(j)} \in [x_j, 2x_j]\  \forall j\neq i  \, \big|\, X_1^{(i)} \in [x_i, 2x_i]  \big)\, .
\end{align*}
First, conditioned on the event that $\bX_1$ is in the rectangle $ [x_1,2x_1] \times \cdots \times [x_d,2x_d]$, a natural assumption is that the probability of being at one particular site is bounded by $c \big( \prod_{i=1}^d x_i \big)^{-1}$ (\textit{i.e.}\ uniform on the rectangle): this gives the first denominator of \eqref{eq:hyp2}.
 Then, $\bP(X_1^{(i)} \in [x_i,2x_i] )$ is  bounded by a constant times $\gp(x_i) x_i^{-\gamma_i}$ by Assumption~\ref{hyp:1}: it gives the first numerator in \eqref{eq:hyp2}.
 The last term is, by H\"older's inequality, bounded by
\[ \prod_{j\neq i}\bP\big( X_1^{(j)} \in [x_j, 2x_j]  \, \big|\, X_1^{(i)} \in [x_i, 2x_i]  \big)^{1/(d-1)} \, ,  \]
which accounts for the product of the $h_{x_i}^{(i)}(x_j)$. 
We keep in mind two cases: (i) when the coordinates are independent (see Example~\ref{ex1}), we recover $h_{x_i}^{(i)}(x_j) \le  x_j^{- a}$ for some $a>0$; (ii) when the coordinates are dependent (see Example~\ref{ex2}), there is some threshold $t(x_i)$ such that $h^{(i)}_{x_i} (x_j) \le  \big( \frac{x_j}{t(x_i)} \vee \frac{t(x_i)}{x_j} \big)^{-a}$ for some $a>0$, and this  satisfies the conditions~\eqref{cond:h} (we refer to Appendix~\ref{appB} for more details, see~\eqref{rewriteex2}-\eqref{realh} and below).

\smallskip
We stress that the term $h^{(i)}_{|x_i|}(|x_j|)$ in \eqref{eq:hyp2} is central: in particular, item (ii) in~\eqref{cond:h} insures that there is a constant $\mathrm{C}$ such that for any $i$,
\begin{equation}
\label{hyp:Doney1}
\bP(X_1^{(i)} = x_i) \le \mathrm{C}\, \gp_i(|x_i|) (1+|x_i|)^{-(1+\gamma_i)}\, ,
\end{equation}
which is Doney's condition \cite[Eq.~(1.9)]{cf:Don97} for each component (generalized to the case $\ga_i\ge 1$).
Also, we point out that Assumption~\ref{hyp:2} is similar in spirit but \emph{weaker} than Williamson's condition \cite[Eq.~(3.10)]{cf:Will68}, which considers the balanced case $\ga_i \equiv \ga < \min( d,2)$, and says that there is a constant $K_0<+\infty$ such that for any $\x \in \bbZ^d$,
\begin{equation}
\label{hyp:Williamson}
\bP(\bX_1 = \x)  \leq K_0 \, ( 1+\| x\|)^{-d}  \bP\big( \|\bX_1\|> \| \x \| \big) \, .
\end{equation}
(\eqref{hyp:Williamson} does not include the case of independent $X^{(i)}$'s, whereas our Assumption~\ref{hyp:2} does.)


Under Assumption~\ref{hyp:2}, we are able to improve Theorem~\ref{thm:locallimit1}.

\begin{theorem}
\label{thm:locallimit2}
Suppose that Assumption \ref{hyp:2} holds.
Then there are constants $c_2,C_2$ such that for any $\x \in \bbZ^d$
\begin{align*}
\bP\big( \hat  \bS_n = \x \big) \leq  \frac{C_2}{\prod_{i=1}^d  \max\{|x_i|, a_n^{(i)}\}} \times
\min_{i\in\{1,\ldots d\}} \Big\{ n \gp_i\big( |x_i| \big) \,  |x_i|^{-\gamma_i}   +  e^{ - c_2 ( |x_i|/ a_n^{(i)} )^2  }\ind_{\{\ga_i= 2\}} \Big \}  \, .
\end{align*}
\end{theorem}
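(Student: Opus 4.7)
It suffices to exhibit, for each $\x$, some $i \in \{1,\ldots,d\}$ for which the $i$-th summand of the minimum on the right-hand side, divided by $\prod_j \max\{|x_j|, a_n^{(j)}\}$, dominates $\bP(\hat\bS_n = \x)$ up to the constant $C_2$; the minimum is then automatically an upper bound. If $|x_j| \leq a_n^{(j)}$ for every $j$ then $\prod_j \max\{|x_j|, a_n^{(j)}\} = \prod_j a_n^{(j)}$ and the bound follows from Griffin's local limit theorem~\eqref{LLT}, because the bracketed factor is bounded below by a positive constant for any $i$ (via the exponential if $\alpha_i = 2$, via the defining relation~\eqref{def:an} of $a_n^{(i)}$ if $\gamma_i = \alpha_i < 2$). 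Otherwise we pick $i$ maximizing $|x_i|/a_n^{(i)}$; after relabeling coordinates and using that Assumption~\ref{hyp:2} is symmetric with respect to the sign of $x_i$, we may assume $i = 1$, $x_1 \geq 0$ and $x_1 > a_n^{(1)}$.

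\textbf{One-big-jump split.} With $M_n^{(1)} := \max_{k \leq n} X_k^{(1)}$, exchangeability gives
\[
\bP(\hat\bS_n = \x) \leq \bP\bigl(\hat\bS_n = \x,\, M_n^{(1)} < x_1/2\bigr) + n\,\bP\bigl(\hat\bS_n = \x,\, X_1^{(1)} \geq x_1/2\bigr).
\]
On the first event no jump exceeds $x_1/2$ along coordinate~$1$, and exponential-tilt / Fuk--Nagaev arguments in the same spirit as in the proof of Theorem~\ref{thm:locallimit1} produce the exponential factor $e^{-c_2 (x_1/a_n^{(1)})^2} \ind_{\{\alpha_1 = 2\}}$ (a power-law contribution, in the non-Gaussian case, being absorbed into the second summand); the local limit theorem along the orthogonal coordinates, combined with a Theorem~\ref{thm:locallimit1}-type decay in any coordinate $j$ with $|x_j| \geq a_n^{(j)}$, supplies the refined denominator $\prod_j \max\{|x_j|, a_n^{(j)}\}$.

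\textbf{Convolution argument on the big-jump term.} Conditioning on $\bX_1 = \y$,
\[
\bP(\hat\bS_n = \x,\, X_1^{(1)} \geq x_1/2) = \sum_{\y\, :\, y_1 \geq x_1/2} \bP(\bX_1 = \y)\, \bP\bigl(\hat\bS_{n-1} = \x - \y + O(1)\bigr),
\]
the $O(1)$ accounting for the shift $\lfloor \bb_n \rfloor - \lfloor \bb_{n-1} \rfloor$. Apply Assumption~\ref{hyp:2} with $i = 1$ to $\bP(\bX_1 = \y)$ and partition the sum according to whether $\y$ lies in the central box $R(\x) := \prod_j [x_j - a_n^{(j)},\, x_j + a_n^{(j)}]$ or in its complement. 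On $R(\x)$ we have $y_1 \asymp x_1$, so property (iii) of~\eqref{cond:h} and the slow variation of $\gp_1$ let us replace $\gp_1(y_1) y_1^{-\gamma_1}$ and $h_{y_1}^{(1)}(|y_j|)$ by their values at $x_1$; bounding $\bP(\hat\bS_{n-1} = \cdot) \leq C/\prod_j a_n^{(j)}$ and summing coordinate-by-coordinate, the $y_1$-sum of $1/(1+y_1)$ over $\sim a_n^{(1)}$ values near $x_1$ is at most $C a_n^{(1)}/x_1$, while for each $j \neq 1$ the sum $\sum_{y_j} h_{x_1}^{(1)}(|y_j|)/(1+|y_j|)$ is at most $C \min\{1,\, a_n^{(j)}/|x_j|\}$ by items (i)-(ii) of~\eqref{cond:h}. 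The product telescopes into $C\, \gp_1(x_1) x_1^{-\gamma_1}/\prod_j \max\{|x_j|, a_n^{(j)}\}$, which, multiplied by $n$, is exactly the claimed bound.

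\textbf{Main obstacle.} The delicate step is the contribution of the complement of $R(\x)$. One partitions it by a coordinate $j^\star$ for which $|y_{j^\star} - x_{j^\star}| > a_n^{(j^\star)}$ and according to whether $y_1 \leq 2 x_1$ or $y_1 > 2 x_1$. In the former sub-case property (iii) of~\eqref{cond:h} still permits replacing $h_{y_1}^{(1)}$ by $h_{x_1}^{(1)}$, and Theorem~\ref{thm:locallimit1} applied to $\bP(\hat\bS_{n-1} = \x - \y)$ along coordinate $j^\star$ supplies the missing decay; in the latter sub-case the tails $\gp_1(y_1) y_1^{-\gamma_1 - 1}$ integrate against the Theorem~\ref{thm:locallimit1}-bound via Karamata-type estimates. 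Verifying that every such off-central contribution is uniformly dominated by the central one---so that the coordinate-by-coordinate telescoping survives---is the main technical hurdle of the proof.
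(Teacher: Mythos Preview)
Your opening reduction contains a logical slip: exhibiting \emph{one} coordinate $i$ for which the $i$-th bracket dominates $\bP(\hat\bS_n=\x)$ does not yield the bound with $\min_i$, since the minimum is smaller than any particular term. (A concrete counterexample: with $\gamma_1=3$, $\gamma_2=1/2$ and suitable $x_1,x_2$, the coordinate maximizing $|x_i|/a_n^{(i)}$ need not minimize the bracket.) What is actually needed---and what the paper does---is to prove the bound for an \emph{arbitrary} fixed $i$ with $|x_i|>a_n^{(i)}$, so that by symmetry it holds for every such $i$. This is easily repaired, but the more serious issue is the refined denominator $\prod_j \max\{|x_j|,a_n^{(j)}\}$. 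Your central-box estimate is correct and does produce this factor. However, in both remaining regions---the event $\{M_n^{(1)}<x_1/2\}$ and the complement of $R(\x)$---your only inputs are the local limit theorem and Theorem~\ref{thm:locallimit1}, each of which carries the unrefined denominator $\prod_j a_n^{(j)}$. For instance, when $d=2$ and both $x_1,x_2$ are large, on the off-central region $y_1\in[x_1/2,2x_1]$, $|y_2-x_2|>a_n^{(2)}$, bounding $\bP(\hat\bS_{n-1}=\x-\y)$ by Theorem~\ref{thm:locallimit1} along coordinate~$2$ and then summing $h^{(1)}_{x_1}(|y_2|)/(1+|y_2|)$ via item~(ii) of~\eqref{cond:h} yields only $Cn\gp_1(x_1)x_1^{-\gamma_1}/(a_n^{(1)}a_n^{(2)})$, which is off from the target by the factor $x_1x_2/(a_n^{(1)}a_n^{(2)})$. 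The decay you hope to extract from Theorem~\ref{thm:locallimit1} is in $|x_2-y_2|$, not in $|x_2|$, and there is no mechanism in your sketch to convert one into the other.

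The paper's route is genuinely different here. Rather than a single threshold at $x_1/2$ and a central box, it performs a full dyadic decomposition $M_n^{(1)}\in[2^kx_1,2^{k+1}x_1)$ and, when several coordinates are large, also splits on $M_n^{(2)}$ (and on whether the two maxima occur in the same increment or in distinct ones). The key device that produces the refined denominator is a \emph{local} two-coordinate Fuk--Nagaev bound (Claim~\ref{claim:localfuknag}, equation~\eqref{claim2}), obtained by halving $n$ twice and applying Cauchy--Schwarz to separate the coordinates: it gives
\[
\bP\bigl(\hat\bS_n=\x,\ M_n^{(1)}\le y_1,\ M_n^{(2)}\le y_2\bigr)\ \le\ \frac{C}{a_n^{(1)}a_n^{(2)}}\,\bP(\cdot)^{1/2}\bP(\cdot)^{1/2},
\]
and the exponential decay of each marginal Fuk--Nagaev factor in $x_i/a_n^{(i)}$ then converts $1/a_n^{(i)}$ into $1/x_i$. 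This coupling of coordinates is the missing ingredient in your sketch; without it (or an equivalent), neither the no-big-jump term nor the off-central contribution can be controlled at the required precision.
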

The case of dimension $d=1$ with $\ga_1\in (0,2)$ is proven in \cite[Theorem~2.7]{cf:B17}: Theorem~\ref{thm:locallimit2} therefore generalizes it to the case $\ga_1=2$, and to the multivariate, non-balanced case.
It is a significant improvement of Theorem~\ref{thm:locallimit1}, in particular when (several)  $x_i$'s are much larger than $a_n^{(i)}$.

\subsection{About the balanced case, and Williamson's condition}
We may obtain another bound if we consider the balanced case, and assume that there is a positive exponent~$\gamma$, and some slowly varying $\gp(\cdot)$ such that
\begin{equation}
\label{eq:Will}
\bP(\bX_1=\x) \leq \gp(\|\x\|) \| \x \|^{-(d+\gamma)}\, .
\end{equation}
This is a natural extension of Williamson's condition~\eqref{hyp:Williamson} to the case $\ga=2$, and as seen in Appendix~\ref{appB} (when treating Example~\ref{ex2}), it implies Assumption~\ref{hyp:2}. 

\begin{theorem}
\label{thm:locallimit3}
Suppose that $a_n^{(i)}\equiv a_n$ (balanced case) and that \eqref{eq:Will} holds.
Then there are constants $c_3,C_3$, such that  for $\|\x\| \geq a_n$ we have
\[
\bP\big( \hat \bS_n =\x \big) \leq  C_3  n \gp(\|\x\|)\, \|\x\|^{-(d+\gamma)} + \frac{1}{(a_n)^d}\, e^{ - c_3 ( \|\x\|/a_n)^2} \ind_{\{\ga =2\}}\, .
\]
\end{theorem}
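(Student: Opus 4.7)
My plan is to follow the classical ``one big jump'' decomposition underpinning the proofs of Theorems~\ref{thm:locallimit1}--\ref{thm:locallimit2}, now tailored to the balanced, Williamson-type assumption \eqref{eq:Will}. I fix the truncation level $M:=\|\x\|/2$, set $\bX_j':=\bX_j\ind_{\{\|\bX_j\|\leq M\}}$, and let $N:=\#\{j\leq n:\|\bX_j\|>M\}$ denote the number of ``big'' jumps. Then I split $\bP(\hat\bS_n=\x)$ according to $N=0,1,\geq 2$ and control each piece separately.

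The dominant contribution comes from $N=1$. By independence and a union bound over the index $j$ of the big jump and over its value $\y$,
\[
\bP(\hat\bS_n=\x,\,N=1) \leq n\sum_{\|\y\|>M}\bP(\bX_1=\y)\,\bP(\hat\bS_{n-1}=\x+\mathbf{b}_n-\mathbf{b}_{n-1}-\y).
\]
The key observation is that \eqref{eq:Will}, together with slow variation of $\gp$, yields the \emph{uniform} pointwise estimate $\bP(\bX_1=\y)\leq c\,\gp(\|\x\|)\|\x\|^{-(d+\gamma)}$ for every $\|\y\|\geq \|\x\|/2$. Factoring this out of the sum and bounding the remaining $\sum_\y \bP(\hat\bS_{n-1}=\cdot)\leq 1$ produces precisely the first term $C_3\,n\gp(\|\x\|)\|\x\|^{-(d+\gamma)}$ in the theorem.

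For $N\geq 2$, a crude union bound over pairs of indices together with the LLT~\eqref{LLT} gives $\bP(\hat\bS_n=\x,N\geq 2)\leq C n^2\bP(\|\bX_1\|>M)^2\,(a_n)^{-d}$; using the tail bound $\bP(\|\bX_1\|>M)\leq C\gp(\|\x\|)\|\x\|^{-\gamma}$ (summing \eqref{eq:Will} over the exterior of a ball) and the scaling \eqref{def:an}, which forces $n\bP(\|\bX_1\|>a_n)\lesssim 1$, one checks that this piece is absorbed into the first term when $\|\x\|\geq a_n$. For $N=0$ one has $\bP(\hat\bS_n=\x,N=0)\leq \bP(\sum_j \bX_j'=\x+\mathbf{b}_n)$, and the truncated walk enjoys exponential moments because its increments have $L^1$-norm at most $\|\x\|/2$. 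A Chernoff argument combined with Fourier inversion (shifting the contour to the optimal tilt) then delivers a bound of the form $(a_n)^{-d}\exp(-c\,\psi_n(\|\x\|))$, where $\psi_n(\|\x\|)=(\|\x\|/a_n)^2$ when $\ga=2$ (producing the second term of the statement via $\bE[(X_1')^2]\asymp\sigma(\|\x\|)$ and \eqref{def:an}), while $\psi_n(\|\x\|)\asymp(\|\x\|/a_n)^\ga$ when $\ga<2$, whose stretched-exponential decay is swallowed into the first term for $\|\x\|\geq a_n$.

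\textbf{Main obstacle.} The $N=0$ step is the most delicate: identifying the exact Chernoff exponent $(\|\x\|/a_n)^2$ in the Gaussian regime requires a careful Laplace-transform and second-moment analysis of the truncated walk through \eqref{def:an}, and the absorption argument for $\ga<2$ must be verified uniformly in the whole range $\|\x\|\geq a_n$ (the borderline case $\|\x\|\asymp a_n$ being the tightest). Here the balanced hypothesis $a_n^{(i)}\equiv a_n$ is crucial, since it allows one to apply a single Chernoff tilt simultaneously along all coordinates---a luxury not available in the non-balanced Theorems~\ref{thm:locallimit1}--\ref{thm:locallimit2}.
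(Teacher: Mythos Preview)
Your decomposition by the number $N$ of big jumps (threshold $\|\bX_j\|>\|\x\|/2$) is natural, and the $N=1$ piece indeed yields the main term via \eqref{eq:Will}. However, your $N\geq 2$ bound has a genuine gap: the estimate $Cn^2\bP(\|\bX_1\|>M)^2(a_n)^{-d}$ does \emph{not} absorb into $Cn\gp(\|\x\|)\|\x\|^{-(d+\gamma)}$. The ratio of the former to the latter is $\big(n\gp(\|\x\|)\|\x\|^{-\gamma}\big)\cdot(\|\x\|/a_n)^d$, and for $\ga<2$ (so $\gamma=\ga$) this is of order $(\|\x\|/a_n)^{d-\ga}$, which diverges whenever $d>\ga$---already for $d=2$ and any $\ga\in(0,2)$. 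The LLT prefactor $(a_n)^{-d}$ is simply too large compared with the target $\|\x\|^{-d}$ when $\|\x\|\gg a_n$. A repair is to apply \eqref{eq:Will} to \emph{one} of the two big jumps (producing $\|\x\|^{-(d+\gamma)}$ directly) and sum freely over the remaining walk and the second big jump; this gives $Cn\cdot n\bP(\|\bX_1\|>M)\cdot\gp(\|\x\|)\|\x\|^{-(d+\gamma)}$, and now $n\bP(\|\bX_1\|>M)\leq C$ for $\|\x\|\geq a_n$ suffices. Separately, your $N=0$ step is under-specified: with truncation at $\|\x\|/2$ the increments can still far exceed $a_n$, and a bare Chernoff tilt does not deliver a \emph{local} bound with prefactor $(a_n)^{-d}$; the ``contour shift'' you invoke amounts to reproving a multivariate Fuk--Nagaev local estimate from scratch.

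The paper sidesteps both issues by decomposing not on $N$ but on the size of the \emph{coordinate} maximum $M_n^{(1)}$ (for the dominant coordinate $|x_1|\asymp\|\x\|$), into the three ranges $[x_1/8,\infty)$, $(C_9 a_n,\,x_1/8)$, and $[0,C_9 a_n]$. The large-max range gives the main term directly via \eqref{eq:Will} and free summation over the remaining walk---like your $N=1$, but never needing to isolate ``$N\geq 2$''. The small-max range is handled by Proposition~\ref{prop:CD} (the split-at-$n/2$ trick combined with the already-established univariate Fuk--Nagaev bound \eqref{consequencefuknag}), which immediately yields the Gaussian correction term. The intermediate range uses a dyadic decomposition of $M_n^{(1)}$: one extracts the maximal jump, applies \eqref{eq:Will} to it, and bounds the remaining walk by Theorem~\ref{thm:fuknagaev}; the dyadic sum converges because Fuk--Nagaev gives super-exponential decay in the dyadic index. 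No multivariate tilting is needed.
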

In practice, we will not use assumption~\eqref{eq:Will} in the rest of the paper: it requires to work in the balanced case, and would not improve our renewal results. We however include Theorem~\ref{thm:locallimit3} since it is an important improvement of Theorem~\ref{thm:locallimit2}, and may reveal useful (in particular in the setting of \cite{cf:GK} and \cite{cf:BGKiid} where \eqref{eq:Will} is verified).

\subsection{Some conventions for the rest of the paper}
\label{sec:convention}
First of all, all regularly varying quantities ($a_n^{(i)},b_n^{(i)}, L_i(\cdot),  \mu_i(\cdot), \gp_i(\cdot)$...) will be interpreted as  functions of positive real numbers, which may be taken infinitely differentiable (see~\cite[Th. 1.8.2]{cf:BGT}).

As we may work along subsequences and exchange the role of the $X^{(i)}$'s, we  assume that $a_n^{(1)} \le  \cdots \le  a_n^{(d)}$ (insuring in particular that $\ga_1\ge \cdots \ge \ga_d$)---the first coordinate is the one with the less fluctuations. 
Finally, assume that $a_n^{(j)}/a_n^{(i)} \to a_{i,j}\in \{0,1\}$ for $j\leq i$ (if $a_n^{(i)}/a_n^{(j)} \to a\in(0,1)$ then rescale the limiting law by~$a$). Having $a_{i,1}=1$ for all $i$ corresponds to the \emph{balanced} case.
We will also assume that: either $b_n^{(i)} \equiv 0$ (as it is the case when $\ga_i<1$; $\ga_i>1$ with $\mu_i=0$;  in the symmetric case for $\ga_i=1$), or that $b_n^{(i)}/a_n^{(i)} \to \pm\infty$ (as it is the case when $\ga_i\geq 1$ with $\mu_i \in \bbR^*$ or $\ga_i=1$ with $p_i\neq q_i$)---the only case where subtleties may arise is when $\ga_i=1$ with $|\mu_i|=0$ or $+\infty$ and $p_i=q_i$. (If $b_n^{(i)}/a_n^{(i)} \to b_i \in \bbR$, then we can reduce to the case $b_n^{(i)}\equiv 0$, at the expense of a translation of the limiting law.)

In the rest of the paper, we denote $u\vee v = \max(u,v)$ and $u\wedge v = \min (u,v)$. For two sequences $(u_n)_{n\ge 0}$, $(v_n)_{n\geq 0}$, we write $u_n \sim v_n$ is $u_n/v_n \to 1$ as $n\to+\infty$,  $u_n= O(v_n)$ if $u_n/v_n$ stays bounded, and $u_n \asymp v_n$ if $u_n= O(v_n)$ and $v_n = O(u_n)$.

\section{Strong renewal theorems}
\label{sec:diag}

We now consider the Green function
$G(\x):=\sum_{n =1}^{\infty} \bP(\bS_n = \x)$,
and we study its behavior as $\| \x\|\to+\infty$.
If $(\bS_n)_{n\geq 0}$ is a (multivariate) renewal process, 
we interpret $G(\x)$ as the renewal mass function, $\bP(\x\in\bS)$.

\subsection{About the favorite direction or scaling}
In the sum $\sum_{n =1}^{\infty} \bP(\bS_n = \x)$, the main contribution comes from some typical number of jumps: identifying that number allows us to determine a favorite direction or scaling along which we will get sharp asymptotics of $G(\x)$.
Let us define $n_i := n_i(\x)$ for $i \in\{ 1, \ldots, d\}$ by the relation
\begin{equation}
\label{def:ni}
 \begin{split}
b_{n_i}^{(i)} &= x_i  \quad \ \ \tif  |b_{n_i}^{(i)}|/a_n^{(i)} \to +\infty \quad (\text{$b_n$ and $x_i$ need to have the same sign})\, , \\
a_{n_i}^{(i)} &= |x_i|  \quad \tif   b_n^{(i)} \equiv 0 \, .
 \end{split}
\end{equation}
Then $n_i$ is the typical number of steps for the $i$-th coordinate to reach $x_i$.
This definition might not give a unique $n_i$, but any choice will work, and $n_i$ is unique up to asymptotic equivalence. If $\ga_i >1$ with $\mu_i \neq 0$, then we have $n_i = |x_i | /|\mu_i|$; if $\ga_i =1$ and $\mu_{i}\in \bbR^*$ or $\ga_i=1$ and $p_i\neq q_i$ then we have $n_i \sim |x_i|/|\mu_i(|x_i|)|$ (see details below, in Section~\ref{sec:casIII-prelim}); and if  $b_n^{(i)}\equiv 0$ then $n_{i}\sim x_{i}^{-\ga_{i}} \phi_{i}(x_{i})^{-1}$  with $\phi_{i}= L_{i}$ if $\ga_{i}\in (0,2)$ and $\phi_{i}=\sigma_{i}$ if $\ga_{i}=2$, thanks to the definition~\eqref{def:an} of $a_n^{(i)}$.

There are mainly three regimes that we consider, 
\begin{enumerate}
\item[I.] Centered case: $\bb_n \equiv \mathbf{0}$. The typical number of steps to reach $\x$ is $n_{i_0} = \min_i n_i$; the \textit{favorite scaling} are the points $\x$ with $x_i \asymp a_{n_{i_0}}^{(i)}$ for all~$i$, see \eqref{def:favdir2} below.
\item[II.] Non-zero mean case: $\mu_i\in \bbR^*$ for some $i$, with $\ga_i>0$. Let  $i_0 = \min\{i , \mu_i\neq 0\}$: the typical number of steps to reach $\x$ is  $n_{i_0} + O(a_{n_{i_0}}^{(i_0)})$;  the \emph{favorite direction} are the points $\x$ with $x_i = b^{(i)}_{n_{i_0}} + O(a_{n_{i_0}}^{(i)})$ for all $i$, see \eqref{def:favdir} below.
\item[III.] Case $\ga_{i_0}=1$, where $i_0 = \min\{i, b_n^{(i)}\not\equiv 0\}$. Assume that either $\mu_{i_0}\in\bbR^*$ or $p_{i_0}\neq q_{i_0}$. The typical number of steps to reach $\x$ is $n_{i_0} + O(m_{i_0})$ with $m_{i_0}:=a_{n_{i_0}}^{(i_0)} / |\mu_{i_0} (a_{n_{i_0}}^{(i_0)}) |$ (see Section~\ref{sec:casIII-prelim}); the \textit{favorite direction} are the points $\x$ with $x_i = b^{(i)}_{n_{i_0}} + O(a_{n_{i_0}}^{(i)})$ for all~$i$, see \eqref{def:favdir3} below. Some more subtleties arise in that case.
\end{enumerate} 

We now present strong renewal theorems, \textit{i.e.}\ sharp asymptotics of $G(\x)$, in cases I-II-III, along the favorite direction or scaling (the proofs are presented in Sections~\ref{sec:casI}-\ref{sec:casII}-\ref{sec:casIII}).
Recall that $g_{\bga}(\cdot)$ is the density of the limiting multivariate stable law.

\subsection{Case I (\emph{centered}): $\mathbf{b}_n \equiv \mathbf{0}$}

We assume here that $\mathbf{b}_n\equiv \mathbf{0}$, and that $\sum_{i=1}^d \ga_i^{-1} >1$, so that $\sum_{n\geq 1} (a_n^{(1)} \cdots a_n^{(d)})^{-1} <+\infty$, and $\bS_n$ is transient.
We leave aside for the moment the case $d=1$, $\ga_1=1$ (considered in \cite{cf:B17}), and the case $d=2$, $\bga=(2,2)$, which are marginal cases---the transience of the random walk depends on the slowly varying functions $L_i(\cdot)$.

\begin{theorem}
\label{thm:ga<1}
Suppose $\bb_n \equiv \mathbf{0}$ and $\sum_{i=1}^d \ga_i^{-1} >1$, and that (i) $\sum_{i=1}^{d} \ga_i^{-1}<2$ or (ii) Assumption~\ref{hyp:2} holds. Recall the definition~\eqref{def:ni} of $n_i$.
 If $\| \x \|\to +\infty$ such that for all~$1\leq i\leq d$
\begin{equation}
\label{def:favdir2}
x_{i} / a_{n_{1}}^{(i)} \to t_i\in \bbR^* \qquad \text{ as } |x_1| \to +\infty \quad  (t_1=\sign(x_1))\, ,
\end{equation}
then we have that, 
\begin{equation}
\label{casIIfavorite}
G(\x) \sim  \frac{\mathtt{C}_{\bga} n_{1}}{a_{n_{1}}^{(1)} \cdots a_{n_{1}}^{(d)}}\, ,\quad \text{ with } \mathtt{C}_{\bga} = \int_{0}^{\infty} u^{-2+\sum\ga_i^{-1}} g_{\bga } \big(t_1 u^{1/\ga_1}, \ldots, t_d u^{1/\ga_d} \big) \dd u \, .
\end{equation}
Recall $n_1 \sim |x_1|^{\ga_1} \phi_1(|x_1|)^{-1}$ with $\phi_1=L_1$ if $\ga_1\in(0,2)$ and $\phi_1=\sigma_1$ if $\ga_1=2$.
\end{theorem}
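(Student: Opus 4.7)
Split $G(\x) = \Sigma_1 + \Sigma_2 + \Sigma_3$ by cutting the sum at $n\in[1,\epsilon n_1]$, $[\epsilon n_1, Kn_1]$ and $[Kn_1,+\infty)$ respectively, and let $\epsilon\to 0$, $K\to+\infty$ after $|x_1|\to+\infty$. The central block $\Sigma_2$ should produce $\mathtt{C}_{\bga}$, while both tails must be shown to be $o(n_1/(a_{n_1}^{(1)}\cdots a_{n_1}^{(d)}))$; the genuinely delicate piece is $\Sigma_1$, where the dichotomy (i)/(ii) enters.

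\smallskip
For $\Sigma_2$, set $u = n_1/n \in [1/K,1/\epsilon]$: the regular variation of the scalings gives $a_n^{(i)}/a_{n_1}^{(i)} \to u^{-1/\ga_i}$, hence $x_i/a_n^{(i)} \to t_i u^{1/\ga_i}$, and combining~\eqref{LLT} with a Riemann-sum argument yields
\[
\Sigma_2(\epsilon,K) \sim \frac{n_1}{a_{n_1}^{(1)}\cdots a_{n_1}^{(d)}} \int_{1/K}^{1/\epsilon} u^{-2+\sum \ga_i^{-1}} g_{\bga}\big(t_1 u^{1/\ga_1},\ldots,t_d u^{1/\ga_d}\big) \dd u,
\]
which converges to $\mathtt{C}_{\bga}\cdot n_1/(a_{n_1}^{(1)}\cdots a_{n_1}^{(d)})$: integrability near $0$ follows from $\sum\ga_i^{-1}>1$, and near $+\infty$ from the polynomial decay of the multivariate stable density. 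For $\Sigma_3$, the uniform LLT~\eqref{LLT} gives $\bP(\bS_n=\x) \leq C/(a_n^{(1)}\cdots a_n^{(d)})$; since this denominator is regularly varying of index $\sum\ga_i^{-1}>1$, Karamata yields $\Sigma_3(K) \leq C K^{1-\sum\ga_i^{-1}}\, n_1/(a_{n_1}^{(1)}\cdots a_{n_1}^{(d)})$, vanishing as $K\to+\infty$.

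\smallskip
The main obstacle is $\Sigma_1$, controlled via the local large deviation estimates of Section~\ref{sec:locallimit}. In all cases, $|x_1|\asymp a_{n_1}^{(1)}$ yields $\gp_1(|x_1|)|x_1|^{-\gamma_1} = O(1/n_1)$ (by~\eqref{def:an} when $\ga_1<2$; by Assumption~\ref{hyp:1} when $\ga_1=2$). Under~(i), Theorem~\ref{thm:locallimit1} with $i=1$ (using that $|x_1|\geq a_n^{(1)}$ for $n\leq \epsilon n_1$, and that the exponential term when $\ga_1=2$ is negligible since $|x_1|^2/(n\sigma_1(|x_1|)) \gtrsim 1/\epsilon$) gives $\bP(\bS_n=\x) \leq Cn/(n_1\, a_n^{(1)}\cdots a_n^{(d)})$; then $n\mapsto n/(a_n^{(1)}\cdots a_n^{(d)})$ is regularly varying of index $1-\sum\ga_i^{-1}>-1$ \emph{precisely when} $\sum\ga_i^{-1}<2$, so Karamata delivers $\Sigma_1(\epsilon) = O(\epsilon^{2-\sum\ga_i^{-1}})\cdot n_1/(a_{n_1}^{(1)}\cdots a_{n_1}^{(d)})$, which vanishes with $\epsilon$. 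Under~(ii), Theorem~\ref{thm:locallimit2} provides a denominator $\prod_i \max\{|x_i|,a_n^{(i)}\} \asymp \prod_i a_{n_1}^{(i)}$ uniformly in $n\leq\epsilon n_1$, yielding the stronger bound $\bP(\bS_n=\x) \leq C n/(n_1\, a_{n_1}^{(1)}\cdots a_{n_1}^{(d)})$; this sums trivially to $O(\epsilon^2)\, n_1/(a_{n_1}^{(1)}\cdots a_{n_1}^{(d)})$, and the constraint $\sum\ga_i^{-1}<2$ is no longer needed. Letting $\epsilon\to 0$ then $K\to+\infty$ concludes the proof.
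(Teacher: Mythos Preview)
Your proof is correct and follows essentially the same strategy as the paper: the three-block decomposition with cutoffs at $\epsilon n_1$ and $K n_1$ (the paper takes $K=\epsilon^{-1}$), the Riemann-sum/LLT treatment of the central block, the Karamata bound on the upper tail, and the dichotomy Theorem~\ref{thm:locallimit1} vs.\ Theorem~\ref{thm:locallimit2} on the lower tail all match the paper's Section~\ref{sec:casI} line by line. Your handling of the exponential correction when $\ga_1=2$ is slightly more compressed than the paper's, but the argument is the same.
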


We refer to \eqref{def:favdir2} as $\x$ going to infinity along the \emph{favorite scaling}.
Note that under \eqref{def:favdir2} we have $n_i \sim |t_i|^{\ga_i} n_1$, so we can exchange the role of the coordinates if needed.

\subsubsection*{Comments on the balanced case}
In the balanced case, $a_{n_{1}}^{(i)} \equiv  |x_1|$ and $\ga_{i}\equiv \ga$: we obtain that if  either $\ga>2/d$ or  Assumption~\ref{hyp:2} holds and if $x_i/x_1 \to t_i\in \bbR^*$ for all $i\in \{1,\ldots, d\}$,
\begin{equation}
\label{renewal:Will}
G(\x) \sim \mathtt{C}_{\bga}  |x_1|^{\ga - d } \phi(|x_1|)^{-1},\ \ \ \text{ with } \mathtt{C}_{\bga} = \ga \int_{0}^{\infty} v^{d-1-\ga}  g_{\bga } \big(t_1 v, \ldots, t_d v \big) \dd u \, , 
\end{equation}
with $\phi=L$ if $\ga\in (0,2)$ and $\phi =\sigma$ if $\ga=2$ ($d\neq 2$). 
This recovers Williamson's result \cite{cf:Will68} (we used a change of variable for the integral), under weaker conditions if $\ga\le d/2$.

\subsubsection*{The marginal case $d=2$, $\bga=(2,2)$}
\label{sec:marginal}

In the same spirit as for the case $d=1,\ga_1=1,b_n^{(1)}\equiv 0$ (studied in \cite[Sect.~3.2]{cf:B17}), we treat here the case  $d=2$ with $\bga=(2,2)$ and $\mathbf{b}_n \equiv \mathbf{0}$. We give here a renewal theorem (along the favorite scaling) in the case where $\bS_n$ is transient, \textit{i.e.}\ if  $\sum_{n=1}^{+\infty} (a_n^{(1)} a_n^{(2)})^{-1} <+\infty$.

\begin{theorem}
\label{thm:marginald=2}
Suppose that $d=2$ with $\bga=(2,2)$ and $\mu_1=\mu_2=0$ ($\mathbf{b}_n \equiv \mathbf{0}$),  and assume also $\sum_{n=1}^{+\infty}(a_n^{(1)} a_n^{(2)})^{-1} <+\infty $. Recall the definition~\eqref{def:ni} of $n_1,n_2$. If $\| \x \| \to + \infty$  such that $x_2/a_{n_1}^{(2)}$ (equivalently $n_1/n_2$) stays bounded away from $0$ and $+\infty$, we have that
\[G(\x) \sim g_{\bga}(0,0) \sum_{n \ge n_1 }  \frac{1}{a_n^{(1)} a_n^{(2)}}  .\]
Note that $n_1 \mapsto \sum_{n \ge n_1 }  (a_n^{(1)} a_n^{(2)})^{-1} $ vanishes as a slowly varying function.
\end{theorem}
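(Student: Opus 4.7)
The plan is to split $G(\x)=\sum_{n\ge 1}\bP(\bS_n=\x)$ into three ranges, and evaluate each separately. Set $f(n):=(a_n^{(1)}a_n^{(2)})^{-1}$; since $\ga_1=\ga_2=2$, $a_n^{(i)}$ is regularly varying of exponent $1/2$, so $f$ is regularly varying of exponent $-1$ and $\ell(n):=nf(n)$ is slowly varying. The transience hypothesis says $L(N):=\sum_{n\ge N}f(n)<+\infty$. Karamata's theorem (applied to $L(N)\sim\int_N^{\infty}\ell(x)/x\,dx$) yields that $L$ is itself slowly varying, $L(N)\to 0$, and—crucially—$\ell(N)=o(L(N))$. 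Note also that under the favorite scaling \eqref{def:favdir2}, regular variation gives $x_i/a_n^{(i)}\sim t_i\,a_{n_1}^{(i)}/a_n^{(i)}\to 0$ as $n/n_1\to+\infty$, so $\x_n\to\mathbf{0}$.

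Fix $\eta>0$ and choose $K$ large enough that $|g_{\bga}(\y)-g_{\bga}(\mathbf{0})|\le\eta$ whenever $\|\y\|\le 2(|t_1|+|t_2|)K^{-1/2}$; pick also $\epsilon>0$ small. Decompose $G(\x)=\Sigma_{\rm tail}+\Sigma_{\rm mid}+\Sigma_{\rm small}$, the three pieces being $n\ge Kn_1$, $\epsilon n_1\le n<Kn_1$, and $n<\epsilon n_1$. For the \emph{tail}, LLT \eqref{LLT} gives $a_n^{(1)}a_n^{(2)}\bP(\bS_n=\x)=g_{\bga}(\x_n)+o(1)$ uniformly, and for $n\ge Kn_1$ we have $g_{\bga}(\x_n)=g_{\bga}(\mathbf{0})+O(\eta)$, so
\[\Sigma_{\rm tail}=\bigl(g_{\bga}(\mathbf{0})+O(\eta)\bigr)L(Kn_1)\bigl(1+o(1)\bigr)\sim\bigl(g_{\bga}(\mathbf{0})+O(\eta)\bigr)L(n_1)\]
by slow variation of $L$. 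For the \emph{middle range}, boundedness of $g_{\bga}$ and LLT give $\bP(\bS_n=\x)\le Cf(n)$ uniformly, so $\Sigma_{\rm mid}=O\bigl(\sum_{\epsilon n_1}^{Kn_1}f(n)\bigr)=O\bigl(\log(K/\epsilon)\,\ell(n_1)\bigr)=o(L(n_1))$.

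For the \emph{small range}, $|x_i|\ge a_n^{(i)}$ holds (for $\epsilon$ small), so Theorem~\ref{thm:locallimit1} applied with $i=1$ gives
\[a_n^{(1)}a_n^{(2)}\bP(\bS_n=\x)\le C n\gp_1(|x_1|)|x_1|^{-\gamma_1}+C\exp\!\bigl(-c|x_1|^2/(n\sigma_1(|x_1|))\bigr).\]
Using $|x_1|^2/\sigma_1(|x_1|)\asymp n_1$ and the substitution $y=n_1/n$, the Gaussian piece sums to $O(\ell(n_1))$. The polynomial piece sums to $\gp_1(|x_1|)|x_1|^{-\gamma_1}\sum_{n\le\epsilon n_1}n f(n)\asymp\gp_1(|x_1|)|x_1|^{2-\gamma_1}\sigma_1(|x_1|)^{-1}\ell(n_1)$; the prefactor tends to $0$ by Assumption~\ref{hyp:1} (directly when $\gamma_1>2$, and via $\gp_1(n)\to 0$ from $\sum_n\gp_1(n)/n<\infty$ when $\gamma_1=2$; recall $\sigma_1$ is slowly varying and bounded away from $0$). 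Thus $\Sigma_{\rm small}=o(\ell(n_1))=o(L(n_1))$. Letting $\eta\to 0$ then $K\to\infty$, $\epsilon\to 0$ yields the theorem.

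The main obstacle is the small-$n$ range. LLT alone would only give $\Sigma_{\rm small}\le C\sum_{n\le\epsilon n_1}f(n)\to C\bigl(L(1)-L(\epsilon n_1)\bigr)\to CL(1)$, a positive constant; since $L(n_1)\to 0$, this is \emph{not} negligible with respect to the main term. The local large deviation estimate of Theorem~\ref{thm:locallimit1} is genuinely needed to gain the factor $\ell(n_1)$, and then the key Karamata relation $\ell=o(L)$ transfers that smallness into $o(L(n_1))$.
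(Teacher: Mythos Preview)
Your proof is correct and follows essentially the same strategy as the paper's in Section~\ref{sec:marginald=2}: the same three-range split, the local limit theorem for the tail and middle ranges, Theorem~\ref{thm:locallimit1} for the small range, and the Karamata relation $\ell(n_1)=n_1 f(n_1)=o(L(n_1))$ (cited there as \cite[Prop.~1.5.9.a]{cf:BGT}) to absorb the small and middle contributions. You are a bit more explicit than the paper about why the small-range polynomial prefactor is controlled; note however that for $\gamma_1=2$ your summability argument from Assumption~\ref{hyp:1} only covers the finite second-moment case, whereas transience here forces at least one coordinate to have infinite variance---in that case the prefactor $\gp_1(|x_1|)/\sigma_1(|x_1|)$ still tends to $0$ by the standard normal-domain relation $x^2\bP(|X_1^{(1)}|>x)=o(\sigma_1(x))$ (and in any event a merely bounded prefactor already suffices, since $O(\ell(n_1))=o(L(n_1))$).
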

In the balanced case ($a_n^{(i)}\equiv a_n$), then $\bS_n$ is transient if and only if $\int_1^{+\infty} \frac{du}{ u \sigma(u)} <+\infty$ (recall the definition \eqref{def:sigmax} of $\sigma(\cdot)$), and we can rewrite the above as: if $\|\x\| \to+ \infty$ such that $|x_1|/|x_2|$  stays bounded away from $0$ and $+\infty$, then
\[G(\x) \sim 2 g_{\bga}(0,0) \int_{|x_1|}^{+\infty}  \frac{du}{ u \sigma(u)}, \quad \text{ as } |x_1|\to+\infty \, .\]

\subsection{Case II (\emph{non-zero mean}):   $\mu_i\neq 0$ for some $i$ with $\ga_i>1$}

We let $i_0$ be the first $i$ such that $\mu_i \neq 0$, and assume that $x_{i_0}$ and $\mu_{i_0}$ have the same sign. 

\begin{theorem}
\label{thm:ren1}
Assume that $\ga_{i_0}>1$, $\mu_{i_0}\neq 0$, and that $\mu_i=0$ for $i<i_0$. Assume that one among the following three conditions holds: 
\[(i) \ \sum_{i=1}^{d} \ga_i^{-1} <2 \ ; \qquad (ii)\ \gamma_{i_0} > \sum_{i\neq i_0} \ga_i^{-1} \ ;\qquad (iii)\ \text{Assumption~\ref{hyp:2}} \, .\]
Recall that $n_{i_0}= |x_{i_0}|/|\mu_{i_0}|$, see \eqref{def:ni}. If $\| \x \| \to + \infty$  such that for all $1\leq i \leq d$
\begin{equation}
\label{def:favdir}
(x_{i}- b_{n_{i_0}}^{(i)}) /a_{n_{i_0}}^{(i)} \to t_i \in \bbR \qquad \text{ as } |x_{i_0}| \to +\infty \quad  (t_{i_0}=0) ,
\end{equation}
(if (i) or (ii) does not hold, assume that $t_i\neq 0$ for $i$'s with $b_n^{(i)}\equiv 0$), then we have that
\begin{equation}
\label{casIfavorite}
G(\x) \sim  \frac{  \mathtt{C}'_{\bga} a_{n_{i_0}}^{(i_0)}}{a_{n_{i_0}}^{(1)} \cdots a_{n_{i_0}}^{(d)}}\, ,\quad \text{ with } \mathtt{C}'_{\bga} = \int_{-\infty}^{\infty} g_{\bga } \big(t_1+  \kappa_1 u, \ldots , t_d + \kappa_d u  \big) \dd u \, .
\end{equation}
where we set $\kappa_i= \mu_i a_{i,{i_0}} \ind_{\{i\ge i_0\}}$ (recall Section~\ref{sec:convention}, $a_{i,i_0}=0$ if ${\ga_i<\ga_{i_0}}$). 
\end{theorem}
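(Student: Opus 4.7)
The idea is to localise $G(\x)=\sum_{n\geq 1}\bP(\bS_n=\x)$ on the typical scale $n\asymp n_{i_0}$, and to identify the asymptotic via a Riemann-sum approximation based on the local limit theorem~\eqref{LLT}. Fix a large $K>0$ and write
\[
G(\x) \,=\, \Sigma_K(\x) + R_K(\x), \qquad \Sigma_K(\x) \,:=\, \sum_{|n - n_{i_0}|\leq K a_{n_{i_0}}^{(i_0)} } \bP(\bS_n=\x),
\]
where $K a_{n_{i_0}}^{(i_0)}$ is the natural fluctuation scale of the number of steps needed for the $i_0$-th coordinate to hit $x_{i_0}$. The plan is to extract the sharp asymptotics of $\Sigma_K$ from~\eqref{LLT} and to show $R_K=o\big(a_{n_{i_0}}^{(i_0)}/\prod_i a_{n_{i_0}}^{(i)}\big)$, first as $\|\x\|\to+\infty$ and then as $K\to+\infty$.

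For the main term, parametrize $n = n_{i_0}-s$, $|s|\leq K a_{n_{i_0}}^{(i_0)}$, and set $u=s/a_{n_{i_0}}^{(i_0)}\in[-K,K]$. A direct computation shows that under~\eqref{def:favdir} the rescaled point $\x_n=A_n^{-1}(\x-\bb_n)$ converges uniformly on the window to $(t_1+\kappa_1 u,\ldots,t_d+\kappa_d u)$: for $i=i_0$ this uses $x_{i_0}-n\mu_{i_0}=s\mu_{i_0}$; for $i>i_0$ with $\mu_i\neq 0$, it uses $x_i-n\mu_i=(x_i-n_{i_0}\mu_i)+s\mu_i$ together with $a_{n_{i_0}}^{(i_0)}/a_{n_{i_0}}^{(i)}\to a_{i,i_0}$; for the remaining coordinates $\bb_n^{(i)}\equiv 0$ and $\kappa_i=0$. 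Regular variation gives $a_n^{(i)}\sim a_{n_{i_0}}^{(i)}$ on the window, so~\eqref{LLT} together with continuity of $g_{\bga}$ yields, uniformly, $\bP(\bS_n=\x)\sim g_{\bga}(t_1+\kappa_1 u,\ldots,t_d+\kappa_d u)/(a_{n_{i_0}}^{(1)}\cdots a_{n_{i_0}}^{(d)})$. Consecutive integer values of $n$ give $u$-increments of size $1/a_{n_{i_0}}^{(i_0)}\to 0$, so a Riemann-sum argument produces
\[
\Sigma_K(\x) \,\sim\, \frac{a_{n_{i_0}}^{(i_0)}}{a_{n_{i_0}}^{(1)}\cdots a_{n_{i_0}}^{(d)}}\int_{-K}^{K} g_{\bga}(t_1+\kappa_1 u,\ldots,t_d+\kappa_d u)\,\dd u,
\]
which converges to the announced leading term as $K\to+\infty$ (finiteness of the integral on $\bbR$ follows from $\kappa_{i_0}=\mu_{i_0}\neq 0$ and the polynomial/Gaussian tails of $g_{\bga}$ in that direction).

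For $R_K$, I would further split according to $K a_{n_{i_0}}^{(i_0)}<|n-n_{i_0}|\leq\eta n_{i_0}$, $n\leq \eta n_{i_0}$, and $n\geq\eta^{-1}n_{i_0}$, for a fixed small $\eta>0$. On the intermediate range the LLT~\eqref{LLT} still applies uniformly, and the same Riemann-sum comparison bounds the contribution by a constant multiple of $\big(a_{n_{i_0}}^{(i_0)}/\prod_i a_{n_{i_0}}^{(i)}\big) \int_{|u|>K}g_{\bga}(t_1+\kappa_1 u,\ldots,t_d+\kappa_d u)\dd u$, which vanishes as $K\to+\infty$. On the large range $n\geq\eta^{-1}n_{i_0}$, one has $|x_{i_0}-b_n^{(i_0)}|\geq c n|\mu_{i_0}|$, so Theorem~\ref{thm:locallimit1} applied with index~$i_0$ produces a pointwise polynomial decay in $n$ whose sum is negligible.

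The genuinely delicate piece is the small-$n$ range, since reaching $\x$ in far fewer than $n_{i_0}$ steps is a multivariate large-deviation event; this is exactly where the three alternative hypotheses intervene. Under~(i) or~(ii), I would apply Theorem~\ref{thm:locallimit1} with index~$i_0$ to bound the contribution by $C\gp_{i_0}(|x_{i_0}|)|x_{i_0}|^{-\gamma_{i_0}}\sum_{n\leq\eta n_{i_0}}n(a_n^{(1)}\cdots a_n^{(d)})^{-1}$, and compare regularly varying sequences: condition~(i), $\sum_i\ga_i^{-1}<2$, makes the sum grow only as $n_{i_0}^{2-\sum_i\ga_i^{-1}}$ which combined with $\gamma_{i_0}\geq\ga_{i_0}>1$ (Assumption~\ref{hyp:1}) yields negligibility, while condition~(ii) covers the complementary range $\sum_i\ga_i^{-1}\geq 2$ where the sum is bounded and $\gamma_{i_0}>\sum_{i\neq i_0}\ga_i^{-1}$ is exactly what is needed. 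Under~(iii), I would use the sharper Theorem~\ref{thm:locallimit2}: the denominator $\prod_i\max(|x_i|,a_n^{(i)})$ is comparable to $\prod_i a_{n_{i_0}}^{(i)}$ under~\eqref{def:favdir} (using the extra assumption $t_i\neq 0$ when $b_n^{(i)}\equiv 0$), and a similar summation yields negligibility using only $\gamma_{i_0}\geq\ga_{i_0}>1$. Combining Step~1 with these vanishing tails closes the proof.
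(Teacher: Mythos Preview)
Your overall plan---localize on the window $|n-n_{i_0}|\leq K\,a_{n_{i_0}}^{(i_0)}$, extract the main term via~\eqref{LLT} and a Riemann sum, then show the remainder is negligible---is exactly the paper's, and your treatment of the main window, of the far range $n\gg n_{i_0}$, and of the small-$n$ range (where (i)/(ii)/(iii) enter) matches Section~\ref{sec:casII}.

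There is, however, a genuine gap on your ``intermediate'' range $K\,a_{n_{i_0}}^{(i_0)}<|n-n_{i_0}|\leq\eta\,n_{i_0}$. The local limit theorem~\eqref{LLT} only gives $\bP(\bS_n=\x)=\big(g_{\bga}(\x_n)+o(1)\big)/\prod_i a_n^{(i)}$ with an $o(1)$ that is uniform in $\x$ but carries no quantitative rate in $n$. Summing this error over a window of width $\asymp\eta\,n_{i_0}$ produces a contribution of order $o(1)\cdot n_{i_0}/\prod_i a_{n_{i_0}}^{(i)}$; since $n_{i_0}/a_{n_{i_0}}^{(i_0)}\to\infty$ (because $\ga_{i_0}>1$), this is \emph{not} $o\big(a_{n_{i_0}}^{(i_0)}/\prod_i a_{n_{i_0}}^{(i)}\big)$. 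Your Riemann-sum comparison therefore controls only the $g_{\bga}$ piece, not the full probability. The paper handles this range with Theorem~\ref{thm:locallimit1} instead: for $|n-n_{i_0}|\geq C\,m_{i_0}$ one has $|x_{i_0}-b_n^{(i_0)}|=|\mu_{i_0}|\,|n-n_{i_0}|\geq a_n^{(i_0)}$, and the resulting bound $\asymp n\,\gp_{i_0}(|n-n_{i_0}|)\,|n-n_{i_0}|^{-\gamma_{i_0}}/\prod_i a_n^{(i)}$ sums over $|n-n_{i_0}|\geq K\,a_{n_{i_0}}^{(i_0)}$ to a quantity of order $K^{1-\gamma_{i_0}}\cdot a_{n_{i_0}}^{(i_0)}/\prod_i a_{n_{i_0}}^{(i)}$, which vanishes as $K\to\infty$ since $\gamma_{i_0}\geq\ga_{i_0}>1$; see \eqref{lastsplit3} and \eqref{firstsplit3}.

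A smaller bookkeeping point: your three ranges for $R_K$ do not cover $\eta\,n_{i_0}<n<(1-\eta)\,n_{i_0}$ nor $(1+\eta)\,n_{i_0}<n<\eta^{-1}n_{i_0}$. In the paper this disappears because Theorem~\ref{thm:locallimit1} is used on the whole of $\{n>n_{i_0}/2,\ |n-n_{i_0}|>K\,a_{n_{i_0}}^{(i_0)}\}$ (and on $\{n>n_{i_0}+K\,a_{n_{i_0}}^{(i_0)}\}$), with the hypotheses (i)/(ii)/(iii) needed only for $n\leq n_{i_0}/2$.
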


\noindent
As for Theorem~\ref{thm:ga<1}, we refer to \eqref{def:favdir} as $\x$ going to infinity along the \emph{favorite direction}.

\subsubsection*{Comments on the balanced case}
If $a_n^{(i)} \equiv a_n$ and $\ga_i\equiv \ga$, case II corresponds to having  $\ga>1$ and one $\boldsymbol{\mu}:=(\mu_1,\dots, \mu_d) \neq \mathbf{0}$.
If $\ga >d/2$, if $\gamma_{i_0} > (d-1)/2$ (in the case $\gamma_{i_0}>\ga=2$) or if Assumption~\ref{hyp:2} holds (put otherwise if (i),(ii) or (iii) in Theorem~\ref{thm:ren1} holds), we therefore obtain  that for $\mathbf{t} = (t_1,\ldots, t_d)$ with $t_i\neq 0$ if $\mu_i=0$, 
\begin{equation}
\label{balancedga>1}
G\big( \lfloor  r  \boldsymbol{\mu} +   a_{r} \mathbf{t} \rfloor \big) \sim  \frac{ \mathtt{C}'_{\mathbf{t}} }{ (a_r)^{d-1}} \qquad \text{ with }  \mathtt{C}'_{\mathbf{t}} =  \int_{-\infty}^{+\infty} g_{\bga} \big( \mathbf{t} + u  \boldsymbol{\mu}  \big) du \, , \quad \text{ as } r\to+\infty.
\end{equation}

In the symmetric case where we have $\mu_i \equiv \mu \neq 0$, the result simplifies: let us state it along the diagonal $\mathbf{1} = (1,\ldots, 1)$ for simplicity,
\[G ( r \mathbf{1} ) \sim \frac{|\mu|^{\frac{d-1}{\ga} -1 } }{(a_r)^{d-1}}\int_{-\infty}^{+\infty} g( v \mathbf{1} ) dv   \quad \text{ as } r\to+\infty\, .  \]
Indeed, we used that $a_{r/|\mu|} \sim |\mu|^{-1/\ga} a_r$, and a change of variable for the integral.

\subsection{Case III: $\ga_{i_0}=1$}
\label{sec:alpha=1}
Let us define $i_0 = \min\{ i , b_n^{(i)} \not \equiv  0\}$, and assume that $\ga_{i_0}=1$ with either $\mu_{i_0} \in \bbR^*$ or  $p_{i_0} \neq q_{i_0}$. 
For an overview of results and estimates on (univariate) random walks of Cauchy type, we refer to \cite{cf:B17}---many of the estimates we use below come from there.
Having $\mu_{i_0}\in \bbR^*$ or $p_{i_0}\neq q_{i_0}$ ensures in particular that $|b_n^{(i_0)}|/a_{n}^{(i_0)} \to+\infty$:

$\ast$  If $\mu_{i_0} \in\bbR^*$ then $b_n^{(i_0)} \sim \mu_{i_0} n$ and $a_n^{(i_0)} = o(n)$ ($|\mu_{i_0}|<+\infty$ implies that $L_{i_0}(x) = o(1)$).

$\ast$ If $|\mu_{i_0}| = +\infty$ then $b_n^{(i_0)} \sim (p_{i_0}-q_{i_0}) n \ell_{i_0}(a_n^{(i_0)})$ with $\ell_{i_0}(x) := \int_1^x L_{i_0} (u) u^{-1} du $ which verifies $\ell_{i_0}(x)/L_{i_0}(x) \to +\infty$ as $x\to+\infty$, see \cite[Prop. 1.5.9.a]{cf:BGT}. Since on the other hand $a_{n}^{(i_0)} \sim n L_{i_0}(a_n^{(i_0)})$, we get that $a_n^{(i_0)}=o(|b_n^{(i_0)}|)$.

$\ast$ If $\mu_{i_0} =0$, then similarly, $b_n^{(i_0)} \sim -(p_{i_0}-q_{i_0}) n \ell^\star_{i_0}(a_n^{(i_0)})$ with $\ell^{\star}_{i_0}(x) := \int_x^{\infty} L_{i_0} (u) u^{-1}$, which also verifies $\ell_{i_0}(x)/L_{i_0}(x) \to +\infty$ as $x\to+\infty$. We also get that $a_n^{(i_0)}=o(|b_n^{(i_0)}|)$.

Analogously to Section~\ref{sec:convention}, if $\ga_i=1$, we work along a subsequence such that the following limit exists  
\begin{equation}
\label{tildea}
\tilde a_{i,i_0} :=\lim_{n\to\infty} \frac{a_n^{({i_0})}}{\mu_{i_0}(a_n^{(i_0)})}  \frac{\mu_i(a_n^{(i)})}{a_n^{(i)}}  \in \bbR \quad \text{ for } i\ge i_0 \quad (\tilde a_{i_0,i_0}=1).
\end{equation}
If $\ga_{i}<1$ we set $\tilde a_{i,i_0} =0$.
We stress that it is possible to have $\tilde a_{i,i_0}>0$ even if $a_{i,i_0}=0$.  For instance, take $L_{i_0}(x)=1$ and $L_i(x)=\log x$: we get that $a_n^{(i_0)} \sim n$, and $a_n^{(i)}\sim n \log n$ so $a_{i,i_0}=0$; but we have that $\mu_{i_0}(n)\sim \log n$ and $\mu_i(n) \sim \frac12 (\log n)^2$, so $\tilde a_{i,i_0}=1/2$.

 \begin{theorem}
\label{thm:ga=1}
Assume that $\ga_{i_0}=1$ with $\mu_{i_0}\in \bbR^*$ or $p_{i_0}\neq q_{i_0}$, and that $b_n^{(i)} \equiv 0$ for $i<i_0$. Suppose that Assumption~\ref{hyp:2} holds.
Define $\tilde \kappa_i = \tilde a_{i,i_0} \ind_{\{i \ge i_0\}}$, and recall the definition~\eqref{def:ni} of~$n_{i_0}$. If $\| \x \|\to +\infty$ such that  for all $1\leq i \leq d$,
\begin{equation}
\label{def:favdir3}
(x_{i}- b_{n_{i_0}}^{(i)}) / a_{n_{i_0}}^{(i)} \to t_i\in \bbR \qquad \text{ as } |x_{i_0}| \to +\infty \quad (t_{i_0}=0),
\end{equation}
(with $t_i\neq 0$ when $b_n^{(i)}\equiv 0$)
then we have 
\begin{equation}
\label{casIIIfavorite}
G(\x)  \sim \frac{1}{|\mu_{i_0}(a_{n_{i_0}}^{(i_0)})| } \cdot \frac{ \mathtt{C}''_{\bga}  a_{n_{i_0}}^{(i_0)}}{ a_{n_{i_0}}^{(1)} \cdots a_{n_{i_0}}^{(d)}}\, ,\quad \text{ with } \mathtt{C}''_{\bga} = \int_{-\infty}^{\infty} g_{\bga } \big(t_1+  \tilde \kappa_1 u, \ldots , t_d + \tilde \kappa_d u  \big) \dd u .
\end{equation}
Note that $n_{i_0} \sim |x_{i_0}|/|\mu_{i_0}(|x_{i_0}|)|$,  and  $\mu_{i_0}(a_{n_{i_0}}^{(i_0)})\sim \mu_{i_0}(|x_{i_0}|) $ as $|x_{i_0}|\to+\infty$, see Section~\ref{sec:casIII-prelim}. 
\end{theorem}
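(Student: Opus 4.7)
The argument parallels that of Theorem~\ref{thm:ren1}, so I concentrate on the new features arising from $\alpha_{i_0}=1$. Write $G(\x)=\sum_{n\geq 1}\bP(\bS_n=\x)$ and split into a main window $I_K:=\{n:|n-n_{i_0}|\leq K m_{i_0}\}$ and its complement $I_K^c$, where $m_{i_0}=a_{n_{i_0}}^{(i_0)}/|\mu_{i_0}(a_{n_{i_0}}^{(i_0)})|$ is the correct fluctuation scale for the number of steps. This scale replaces the scale $a_{n_{i_0}}^{(i_0)}$ of Case~II, and differs from it by a slowly varying factor that is non-trivial precisely when $\mu_{i_0}=0$ or $|\mu_{i_0}|=+\infty$. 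The limit will be taken first in $|\x|\to\infty$, then in $K\to\infty$.

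\textbf{Scale identifications.} Using Karamata-type estimates, I first check that for $n=n_{i_0}+\lfloor u m_{i_0}\rfloor$ with $|u|\leq K$ one has: (a) $a_n^{(j)}/a_{n_{i_0}}^{(j)}\to 1$ for every $j$, since $m_{i_0}=o(n_{i_0})$; (b) $(x_{i_0}-b_n^{(i_0)})/a_n^{(i_0)}\to -\sign(\mu_{i_0})u$, which uses that $b_n^{(i_0)}=n\mu_{i_0}(a_n^{(i_0)})$ and a Taylor-type expansion of this regularly varying function at $n_{i_0}$ (absorbing the slowly varying correction into $m_{i_0}$); (c) $(x_i-b_n^{(i)})/a_n^{(i)}\to t_i-\sign(\mu_{i_0})\tilde a_{i,i_0}u$ for $i>i_0$, which is exactly the reason the limit \eqref{tildea} was introduced; (d) $x_i/a_n^{(i)}\to t_i$ for $i<i_0$. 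The point of this preliminary step is that the LLT argument $\x_n$ in \eqref{LLT} converges to $\mathbf{t}-\sign(\mu_{i_0})\tilde{\boldsymbol{\kappa}}u$.

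\textbf{Main contribution on $I_K$.} Apply the LLT \eqref{LLT} to each $n\in I_K$, use the continuity and boundedness of $g_{\bga}$ together with the identifications above, and convert the sum into a Riemann sum over $u\in[-K,K]$ with spacing $1/m_{i_0}$; after the substitution $u\mapsto -\sign(\mu_{i_0})u$ (a bijection of $\mathbb{R}$),
\[
\sum_{n\in I_K}\bP(\bS_n=\x) \;=\; (1+o(1))\,\frac{m_{i_0}}{a_{n_{i_0}}^{(1)}\cdots a_{n_{i_0}}^{(d)}}\int_{-K}^{K} g_{\bga}(\mathbf{t}+\tilde{\boldsymbol{\kappa}}u)\,\dd u .
\]
Letting $K\to\infty$ afterwards produces the integral $\mathtt{C}''_{\bga}$ and the prefactor $m_{i_0}/\prod_j a_{n_{i_0}}^{(j)}=(|\mu_{i_0}(a_{n_{i_0}}^{(i_0)})|)^{-1}\,a_{n_{i_0}}^{(i_0)}/\prod_j a_{n_{i_0}}^{(j)}$ announced in \eqref{casIIIfavorite}.

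\textbf{Remainder on $I_K^c$ and main obstacle.} It remains to show $\sum_{n\in I_K^c}\bP(\bS_n=\x)=o_K\!\left(m_{i_0}/\prod_j a_{n_{i_0}}^{(j)}\right)$. For the moderate regime $\eta n_{i_0}\leq n\leq \eta^{-1}n_{i_0}$ with $n\notin I_K$, I reuse \eqref{LLT} and the same Riemann-sum machinery; the resulting bound is controlled by $\int_{|u|>K}g_{\bga}(\mathbf{t}+\tilde{\boldsymbol{\kappa}}u)\,\dd u$, which vanishes as $K\to\infty$ by integrability of $g_{\bga}$. The delicate part is the small/large regimes $n\leq\eta n_{i_0}$ and $n\geq \eta^{-1}n_{i_0}$, where $|x_{i_0}-b_n^{(i_0)}|\gtrsim |x_{i_0}|\gg a_n^{(i_0)}$; here I invoke Theorem~\ref{thm:locallimit2} (and this is where Assumption~\ref{hyp:2} is essential, Theorem~\ref{thm:locallimit1} being too weak) applied with $i=i_0$ to get
\[
\bP(\bS_n=\x)\;\leq\; \frac{C_2\,n\,\gp_{i_0}(|x_{i_0}|)|x_{i_0}|^{-\gamma_{i_0}}}{\prod_{j=1}^{d}\max\{|x_j-b_n^{(j)}|,a_n^{(j)}\}},
\]
and then sum this bound over $n$. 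The main obstacle is the bookkeeping of the denominator $\prod_{j\neq i_0}\max\{|x_j-b_n^{(j)}|,a_n^{(j)}\}$ as $n$ varies over several orders of magnitude, separating $j$'s with $b_n^{(j)}\equiv 0$ (where $|x_j|\asymp a_{n_{i_0}}^{(j)}$ throughout) from $j$'s with $b_n^{(j)}\not\equiv 0$ (where the shift moves with $n$); this is more intricate than in Case~II because of the slowly varying correction hidden inside $m_{i_0}$. The summability is finally obtained using $n_{i_0}\gp_{i_0}(|x_{i_0}|)|x_{i_0}|^{-\gamma_{i_0}}\ll m_{i_0}/a_{n_{i_0}}^{(i_0)}$, which follows from the definition of $n_{i_0}$ together with Assumption~\ref{hyp:1}.
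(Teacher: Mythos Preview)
Your main-window argument is correct and matches the paper. The genuine gap is in your ``moderate regime'' $\eta n_{i_0}\leq n\leq \eta^{-1}n_{i_0}$, $n\notin I_K$, which you propose to handle via the LLT~\eqref{LLT}. Two problems arise. First, the LLT error $o(1)/(a_n^{(1)}\cdots a_n^{(d)})$ is only uniform in $\x$, with no rate; summing it over $\sim n_{i_0}$ indices yields $o(1)\cdot n_{i_0}/\prod_j a_{n_{i_0}}^{(j)}$, which is \emph{not} $o\big(m_{i_0}/\prod_j a_{n_{i_0}}^{(j)}\big)$ since $m_{i_0}=o(n_{i_0})$. Second, the identification $\x_n\approx \mathbf t+\tilde{\boldsymbol\kappa}u$ holds uniformly only on bounded $u$-intervals, whereas on your moderate regime $u=(n-n_{i_0})/m_{i_0}$ runs up to $\sim n_{i_0}/m_{i_0}\to\infty$, so the Riemann-sum reduction to $\int_{|u|>K}g_{\bga}$ is unjustified.

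The paper avoids this by never using the LLT outside the window $|n-n_{i_0}|\leq \gep^{-1}m_{i_0}$; it applies Theorem~\ref{thm:locallimit2} (hence Assumption~\ref{hyp:2}) for \emph{all} $n$ outside the window, with a purely technical further split at $n_{i_0}/2$ and $2n_{i_0}$. On the range $\gep^{-1}m_{i_0}<|n-n_{i_0}|\leq n_{i_0}$ the crucial input is $|x_{i_0}-b_n^{(i_0)}|\geq c\,|n-n_{i_0}|\,|\mu_{i_0}(a_{n_{i_0}}^{(i_0)})|$, and summing the resulting $|n-n_{i_0}|^{-2}$ decay gives a factor $O(\gep)$ in front of $m_{i_0}/\prod_j a_{n_{i_0}}^{(j)}$. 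So your separate ``moderate'' regime is neither needed nor workable as described. A smaller point: your closing inequality $n_{i_0}\gp_{i_0}(|x_{i_0}|)|x_{i_0}|^{-\gamma_{i_0}}\ll m_{i_0}/a_{n_{i_0}}^{(i_0)}$ reduces to $L_{i_0}(|x_{i_0}|)\to 0$, which fails when $|\mu_{i_0}|=+\infty$; the correct smallness is $L_{i_0}(|x_{i_0}|)/|\mu_{i_0}(|x_{i_0}|)|\to 0$, and it emerges only after exploiting the extra $|x_{i_0}-b_n^{(i_0)}|^{-1}$ coming from the $i_0$-th factor in the denominator of Theorem~\ref{thm:locallimit2}.
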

Again, we refer to \eqref{def:favdir3} as $\x$ going to infinity along the favorite direction.

\subsubsection*{Comments on the balanced case}
In the balanced and \emph{symmetric} case, we have $a_n^{(i)} \equiv a_n$ and $b_n^{(i)} \equiv b_n$ ($\mu_i(x) \equiv \mu(x)$, $\tilde \kappa_i \equiv 1$). The favorite direction is the diagonal $\mathbf{1} =(1,\ldots,1)$, and we can write, for $\mathbf{t} = (t_1,\ldots, t_d)$,
\begin{equation}
G( r \mathbf{1} + \lfloor a_{r/|\mu(r)|} \mathbf{t}\rfloor) \stackrel{r\to+\infty}{\sim}  \frac{\mathtt{C}''_{\mathbf{t}}}{ |\mu( r) |\times (a_{r/|\mu(r)|})^{d-1}} \quad \text{ with } \mathtt{C}''_{\mathbf{t}} =\int_{-\infty}^{\infty} g_{\bga } \big(\mathbf{t} +  u \mathbf{1} \big) \dd u
\, .
\label{balancedga=1}
\end{equation}
Indeed, $n_{i_0}\sim r/|\mu(r)|$, and we also used that $\mu(a_{n_{i_0}}) \sim \mu(|b_{n_{i_0}}|) =\mu(r)$, see \cite[Lemma~4.3]{cf:B17}.

As a simple example, take Example~\ref{ex2} with $\gb_i\equiv 1$, $\gb=1+d$:
$\bP(\bX_1 = \x) = \mathrm{c}_d\, \|\x\|^{-(1+d)}$ for $x\in \mathbb{N}^d$, and $\bP(X_1^{(i)}>n) \sim \mathrm{c}_1/n$.
We have $\ga_i\equiv 1$, and $a_n \sim  n/\mathrm{c}_1$, $\mu(n) \sim \mathrm{c}_1 \log n$: we therefore get that
$ G(r\mathbf{1} + ( r/\log r)\mathbf{t})  \sim \mathrm{c}_{\mathbf{t}} (\log r)^{d-2} r^{-(d-1)} $ as $r\to+\infty$.

\section{Renewal estimates away from the favorite direction or scaling}
\label{sec:resultsaway}

In this section, we provide bounds on $G(\x)$ that hold uniformly on $\x$: in particular, this sharpens our estimates when $\x$ goes away from the favorite direction or scaling (one would have $\mathbf{C}_{\bga} , \mathbf{C}'_{\bga}$ or $\mathbf{C}''_{\bga}\to 0$ in Theorems~\ref{thm:ga<1}, \ref{thm:ren1} or \ref{thm:ga=1}). 
We do not obtain sharp asymptotics for $G(\x)$, mostly because the local large deviation estimates of Section~\ref{sec:locallimit} are not sharp---first of all because our Assumption~\ref{hyp:2} does not give the precise asymptotic of $\bP(\bX_1=\x)$.
Let us stress that in \cite{cf:BGKcramer}, the authors manage to obtain the sharp asymptotic of $G(\x)$ in a specific setting (with application to a DNA model): $\bX_1 \in \bbN^2$, and the local probabilities $\bP(\bX_1=\x)$ are known asymptotically, one coordinate having a heavy-tail, the second one having an exponential tail.
One should also be able to obtain the sharp asymptotics of $G(\x)$ for instance in Example~\ref{ex2}, but we do not pursue it here to avoid 
additional lengthy and technical calculations.

We also stress that having uniform bounds on $G(\x)$ turn out to be useful, for instance when studying the intersection of two independent (multivariate) renewal processes $\bS=\{\bS_n\}_{n\geq 0}$, $\bS'=\{\bS'_n\}_{n\geq 0}$ with same distribution.
Indeed, $\bE[| \bS \cap \bS'|] = \sum_{\x \in\bbZ^d} \bP(\x \in \bS)^2$, and to known whether $\bS \cap \bS'$ is finite, good bounds on $G(\x)=\bP(\x\in\bS)$ are essential. The main contribution to $\bE[| \bS \cap \bS'|]$ will come from points along the favorite direction, and one needs to know how fast $G(\x)$ decreases when $\x$ moves away from it.
We refer to \cite[App.~A.2]{cf:BGKiid} for some results on the intersection of two independent renewal processes.

For the simplicity of the exposition, we only present the case of  dimension $d=2$.
Also, we will work under Assumption~\ref{hyp:2}.
Often, results will be sharper in the case of renewal processes, as will be outlined in our theorems. We divide our statements into three parts: $\mathbf{b}_n \equiv 0$ (centered);  $b_n^{(1)}, b_n^{(2)} \not \equiv 0$ (non-zero mean for both coordinates); $b_n^{(i_0)}\not \equiv 0$ and $b_n^{(i_1)} \equiv 0$ (mixed case).
The proofs are presented in Section~\ref{sec:away}.

\subsection{Case I (\emph{centered case}), $\mathbf{b}_n\equiv \mathbf{0}$}

Let us leave aside the marginal case $d=2$ $\bga=(2,2)$: we have $\ga_1^{-1}+\ga_2^{-1}>1$.
Recall the definition \eqref{def:ni} of $n_i$ ($n_i \sim |x_i|^{\ga_i} \phi_i(|x_i|)^{-1}$ with $\phi_i=L_i$ if $\ga_i\in(0,2)$ and $\phi_i= \sigma_i$ if $\ga_i=2$), and let $i_0,i_1$ be  such that $n_{i_0} = \min\{ n_1,n_2\}$ and  $n_{i_1} = \max\{n_1,n_2\}$.

\begin{theorem}
\label{thm:awayI}
Assume that $\bb_n \equiv 0$, and that Assumption~\ref{hyp:2} holds. 
Then for any $\gd>0$, we have a constant $C_{\gd}$ such that for any $\x\in \bbZ^2$, 
\begin{equation}
\label{casIIaway}
G(\x) \le  \frac{C_{\gd} n_{i_0}}{a_{n_{i_0}}^{(1)}  a_{n_{i_0}}^{(2)}}     \Big( \frac{n_{i_1}}{n_{i_0}} \Big)^{- \nu+ \gd} \, , \qquad \text{with } \nu= (1+\ga_{i_1}^{-1}) \frac{\ga_{1}^{-1} +\ga_{2}^{-1} -1}{\ga_{1}^{-1} +\ga_{2}^{-1} +1} \, .
\end{equation}
If $(\bS_n)_{n\ge 0}$ is a renewal process (necessarily $\ga_1,\ga_2<1$), we can replace $\nu$ by $ 1+\ga_{i_1}^{-1}$.
\end{theorem}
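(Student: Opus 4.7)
The plan is to decompose $G(\x)=\sum_{n\geq 1}\bP(\bS_n=\x)$ at a threshold $N\in[n_{i_0},n_{i_1}]$, to be optimised at the end, and bound each piece with one of two estimates. For the small-$n$ part, I would apply Theorem~\ref{thm:locallimit2} (with $\gamma_{i_1}=\ga_{i_1}$, majorising the $\min_i$ by the $i=i_1$ contribution): the two inequalities $|x_{i_1}|\geq a_n^{(i_1)}$ and $\max\{|x_{i_0}|,a_n^{(i_0)}\}\geq|x_{i_0}|$ hold throughout $n\leq n_{i_1}$ and give
\[
\bP(\bS_n=\x)\;\leq\;\frac{C\,n/n_{i_1}}{|x_1|\,|x_2|},
\]
whose partial sum up to $N$ is of order $N^2/(n_{i_1}|x_1||x_2|)$. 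For the large-$n$ part, I would use the universal concentration-function bound $\bP(\bS_n=\x)\leq C/(a_n^{(1)}a_n^{(2)})$ (a consequence of \eqref{LLT} together with $\|g_{\bga}\|_{\infty}<\infty$, plus a standard Fourier argument for moderate $n$); Karamata's theorem and transience $\sigma:=\ga_1^{-1}+\ga_2^{-1}>1$ produce a tail contribution of order $N/(a_N^{(1)}a_N^{(2)})$.

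Setting $r:=n_{i_1}/n_{i_0}\geq 1$ and $s:=N/n_{i_0}\in[1,r]$, and using $|x_j|=a_{n_j}^{(j)}$ together with the regular variation of $a_n^{(i)}$ of index $1/\ga_i$, dividing both bounds by the target $n_{i_0}/(a_{n_{i_0}}^{(1)}a_{n_{i_0}}^{(2)})$ yields after simplification
\[
G(\x)\;\leq\;\frac{C\,n_{i_0}}{a_{n_{i_0}}^{(1)}a_{n_{i_0}}^{(2)}}\Bigl(\,s^{2}\,r^{-(1+\ga_{i_1}^{-1})}\,+\,s^{\,1-\sigma}\,\Bigr).
\]
The two summands balance when $s^{\sigma+1}\asymp r^{\,1+\ga_{i_1}^{-1}}$, i.e.\ $s\asymp r^{(1+\ga_{i_1}^{-1})/(\sigma+1)}$, which lies in $[1,r]$ since $1+\ga_{i_1}^{-1}\leq\sigma+1$; at this optimum both terms are of order $r^{-(1+\ga_{i_1}^{-1})(\sigma-1)/(\sigma+1)}=r^{-\nu}$, the advertised rate. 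The arbitrary $\delta>0$ in the final exponent is there to absorb the slowly varying corrections hidden behind the $\asymp$-signs, handled via Potter-type bounds.

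For the renewal case $\bX_1\in\bbN^d$ (which forces $\ga_1,\ga_2<1$), coordinatewise monotonicity of $\bS_n$ together with a classical lower-tail large deviation shows that $\bP(\bS_n^{(i_0)}\leq x_{i_0})$ decays stretched-exponentially in $n/n_{i_0}$ once $n\gg n_{i_0}$, so the large-$n$ contribution is negligible; one may then take $N=n_{i_0}$, i.e.\ $s=1$, and only the first term survives, producing the improved exponent $1+\ga_{i_1}^{-1}$. The main obstacle will be securing the universal bound $\bP(\bS_n=\x)\leq C/(a_n^{(1)}a_n^{(2)})$ uniformly in $\x$ throughout the range $n>N$---for $n$ in the intermediate regime, \eqref{LLT} is not yet sharp and Theorem~\ref{thm:locallimit1} has to be supplemented by a concentration-function estimate---together with tracking the slowly varying corrections uniformly across the boundary cases ($\ga_{i_0}=1/2$, $\ga_i=2$) where Karamata estimates acquire logarithmic factors that justify the $+\delta$ loss.
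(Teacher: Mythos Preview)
Your proposal is correct and follows essentially the same route as the paper's own proof: split $G(\x)$ at a threshold $m\in[n_{i_0},n_{i_1}]$, bound the small-$n$ part via Theorem~\ref{thm:locallimit2} (getting a contribution of order $m^2/(n_{i_1}|x_1||x_2|)$), bound the large-$n$ part by the uniform local bound $\bP(\bS_n=\x)\le C/(a_n^{(1)}a_n^{(2)})$ and Karamata, then optimise in $m$; the renewal improvement comes from replacing the Karamata tail by a stretched-exponential lower-tail large deviation for $S_n^{(i_0)}$, exactly as you outline.

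One remark: the ``main obstacle'' you flag is not one. The uniform bound $\bP(\bS_n=\x)\le C/(a_n^{(1)}a_n^{(2)})$ for \emph{all} $n\ge 1$ and all $\x$ is a direct consequence of the local limit theorem~\eqref{LLT} as stated (it is uniform in $\x$, and $g_{\bga}$ is bounded); for the finitely many small values of $n$ the inequality is trivial since $a_n^{(i)}$ is bounded below. The paper uses this without further justification, and you may do the same.
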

Clearly, Theorem~\ref{thm:awayI} improves \eqref{casIIfavorite} in the regime $n_{i_1}/n_{i_0} \to +\infty$.

\subsubsection*{About the balanced case}
If $\ga_i\equiv \ga \in (0,2]$ and $a_n^{(i)}\equiv a_n$, we obtain that under Assumption~\ref{hyp:2}, for any $\gd>0$ there exists a constant $C_{\gd}$ such that for any $\x\in \bbZ^2$, setting $x_{i_0} = \min \{ x_1,x_2\}$ and $x_{i_1} = \max \{  x_1,x_2\}$,
\begin{equation}
G(\x) \leq C_{\gd} |x_{i_0}|^{\ga-2} \phi(|x_{i_0}|)^{-1} \, \Big( \frac{x_{i_1}}{x_{i_0}} \Big)^{- \theta + \gd}  \quad \text{ with } \ \theta := (1+\ga) \frac{2-\ga}{2+\ga} \, ,
\end{equation}
with $\phi=L$ if $\ga\in(0,2)$ and $\phi=\sigma$ if $\ga=2$.
where  (recall $n_i\sim x_{i}^{-\ga} \phi(x_i)^{-1}$). 
If  $(\bS_n)_{n\geq 1}$ is a renewal process (necessarily $\ga\in (0,1)$), then we can replace $\theta$ by $1+\ga$.

\subsection{Case II-III (\emph{non-zero mean}), subcase (a): $b_n^{(1)}, b_n^{(2)} \neq 0$}

Let us consider the case when for both $i=1,2$ we have: either $\ga_i\geq 1$ and $\mu_i\in \bbR^*$, or $\ga_i=1$ and $p_i\neq q_i$. This insures that $b_n^{(i)}\neq 0$ for $i=1,2$, and places us in the setting of cases II and III of Section~\ref{sec:diag}.

Recall the definition \eqref{def:ni} of $n_i$: we have $n_i \sim |x_i|/|\mu_i(|x_i|)|$ (both if $\ga_i>1$ or $\ga_i=1$).
Let us also define $m_i:= a_{n_i}^{(i)}/|\mu_i(a_{n_i}^{(i)})|$:  in Section~\ref{sec:casIII-prelim}, we see that $m_i=o(n_i)$, and that the typical number of steps for the $i^{\rm th}$ coordinate to reach $x_i$ is $n_i +O(m_i)$ (this is trivial if $\ga_i>1$).
Let us stress that the favorite direction ($|x_2- b_{n_1}^{(2)}| =O (a_{n_1}^{(1)})$, $|x_1- b_{n_2}^{(1)}| = O( a_{n_2}^{(2)})$, see \eqref{def:favdir}-\eqref{def:favdir3}) corresponds to having $n_1- n_2=O(m_i)$ for $i=1,2$.
We will state only the case $n_{1} \leq n_{2}$, the other case being symmetric.
\begin{theorem}
\label{thm:casII-IIIb}
Suppose that Assumption~\ref{hyp:2} holds, and that for $i=1,2$: either $\ga_i\geq 1$ and $\mu_i\in\bbR^*$, or $\ga_i=1$ and $p_i\neq q_i$.
Then for every $\gd>0$ there is a constant $C_{\gd}$ such that, for all $\x\in \bbZ^2$ (recalling the definition~\eqref{def:ni} of $n_i$, and of $m_i:= a_{n_i}^{(i)}/|\mu_i(a_{n_i}^{(i)})|$),

\smallskip
{\rm (i)} If $n_1\le n_2 \leq 2 n_1$,
\begin{align}
G(\x) \le  \frac{C_{\gd}}{a_{n_2}^{(2)} |\mu_1(a_{n_1}^{(1)})|} &\times  \Big( \frac{n_2-n_1}{m_2}\Big)^{-1+\gd} \Big(\frac{n_2- n_1}{m_1} \Big)^{\gd} \notag \\
& \times  \bigg\{ \Big( \frac{n_2-n_1}{m_1}\Big)^{-\ga_1}  R^{(1)}(n_2-n_1) +   \Big( \frac{n_2-n_1}{m_2} \Big)^{-\ga_2}  R^{(2)}(n_2-n_1)  \bigg\}  \, ,
\label{casII-IIIawaybi}
\end{align}
with $R^{(i)}(m):= \ind_{\{\ga_i\in (0,2)\}} + (m^{2-\gamma_i} + e^{-c m/m_i}) \ind_{\{\ga_i=2\}}$.

\smallskip
{\rm (ii)} If $n_2\geq 2 n_1$, 
\begin{equation}
\label{casII-IIIawaybii}
G(\x) \le  C_{\gd} \big( n_1 \vee n_2^{1\wedge (\gamma_2/\gamma_1)}  \big) \times n_2^{-(1+\gamma_2)+\gd} \leq C_{\gd} n_2^{-\gamma_2+\gd} \, . 
\end{equation}
If $\bS_n$ is a renewal process, then $G(\x)=0$ as soon as $n_2\geq |x_1|$, in particular if $n_2 \geq n_1^{1+\gd}$.
\end{theorem}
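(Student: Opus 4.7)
The plan is to estimate $G(\x)=\sum_{n\geq 1}\bP(\bS_n=\x)$ by decomposing the sum according to the position of $n$ relative to $n_1$ and $n_2$, and applying the local large deviation bound of Theorem~\ref{thm:locallimit2} to $\bP(\hat\bS_n=\tilde\x(n))$ at each $n$, where we write $\tilde x_i(n):=x_i-\lfloor b_n^{(i)}\rfloor$. The crucial quantitative input is the comparison $|\tilde x_i(n)|/a_n^{(i)}\asymp |n-n_i|/m_i$, valid for $n$ in a fixed-ratio neighbourhood of $n_i$: this follows from the regular variation of $b_n^{(i)}$ (asymptotically linear if $\ga_i>1$, of the form $n\mu_i(a_n^{(i)})$ if $\ga_i=1$, see Section~\ref{sec:casIII-prelim}) together with the defining property $x_i=b_{n_i}^{(i)}+O(a_{n_i}^{(i)})$.

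For part (i), set $K:=n_2-n_1$ and split the summation range into three blocks: $A_1:=\{n:|n-n_1|\le m_1\}$, $A_2:=\{n:|n-n_2|\le m_2\}$, and the complement (still contained in $[n_1/2,3n_2/2]$, the rest being negligible). On $A_1$ the first coordinate is in the local limit regime while $|\tilde x_2(n)|\asymp K|\mu_2(a_{n_2}^{(2)})|\gg a_n^{(2)}$, so Theorem~\ref{thm:locallimit2} applied with $i=2$ gives
\[
\bP(\bS_n=\x)\le \frac{C}{a_n^{(1)}|\tilde x_2(n)|}\Big(n\gp_2(|\tilde x_2|)|\tilde x_2|^{-\gamma_2}+R^{(2)}(|\tilde x_2|)\Big).
\]
Summing over $A_1$ (of cardinality $\asymp m_1$), using $m_1/a_{n_1}^{(1)}=1/|\mu_1(a_{n_1}^{(1)})|$ and absorbing the slowly varying $\gp_2$ into a $K^{\gd}$ factor via Potter's bound, produces the $(K/m_2)^{-1-\ga_2+\gd}$ contribution of the theorem. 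The symmetric treatment of $A_2$ yields the companion $(K/m_1)^{-1-\ga_1+\gd}$ term. On the intermediate block $[n_1+m_1,n_2-m_2]$ both deviations are large; using the $\min_i$ in Theorem~\ref{thm:locallimit2} and setting $u:=n-n_1$, the resulting integrand $\gp_i(u|\mu_i|)u^{-1-\gamma_i}/((K-u)|\mu_{3-i}|)$ is dominated by its values near the endpoints $u\asymp m_1$ and $K-u\asymp m_2$, reproducing the same orders as $A_1$ and $A_2$.

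For part (ii), $n_2\geq 2n_1$, the analysis splits at $n=2n_1$. For $n\in[1,2n_1]$ the first coordinate is reachable by typical fluctuations at $n\asymp n_1$ while $|\tilde x_2(n)|\asymp|x_2|\asymp n_2|\mu_2|$; Theorem~\ref{thm:locallimit2} with $i=2$ thus gives $\bP(\bS_n=\x)\le C\,n\gp_2(|x_2|)|x_2|^{-\gamma_2}/(a_n^{(1)}|x_2|)$, and summing using the regular variation of $n\mapsto n/a_n^{(1)}$ (index $1-1/\ga_1\geq 0$) produces the bound $\asymp n_1 n_2^{-1-\gamma_2+\gd}$. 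For $n\in[2n_1,n_2]$ the first coordinate also overshoots, $|\tilde x_1(n)|\asymp n|\mu_1|$; the $\min_i$ term in Theorem~\ref{thm:locallimit2} switches from $i=1$ to $i=2$ at the crossover $n\asymp n_2^{\gamma_2/\gamma_1}$, and integrating yields the mixed term $n_2^{1\wedge(\gamma_2/\gamma_1)}\cdot n_2^{-1-\gamma_2+\gd}$; the regime $n>n_2$ contributes only lower-order terms. The renewal-process conclusion $G(\x)=0$ for $n\geq|x_1|$ is immediate since $S_n^{(1)}\geq n$ when $\bX_1\in\bbN^2$, and together with $n_1\asymp|x_1|$ this makes the condition active as soon as $n_2\geq n_1^{1+\gd}$.

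The main obstacle is the careful book-keeping of slowly varying quantities ($L_i$, $\gp_i$, $\mu_i(\cdot)$ in the $\ga_i=1$ case, and $\sigma_i$ in the Gaussian case) together with the remainders $R^{(i)}$ in Theorem~\ref{thm:locallimit2}: Potter-type bounds are invoked repeatedly to absorb these factors into the $\gd$-corrections of the exponents, while regular-variation identities such as $\gp_i(a_{n_i}^{(i)})(a_{n_i}^{(i)})^{-\gamma_i}\asymp 1/n_i$ (when $\gamma_i=\ga_i$) and $m_i|\mu_i(a_{n_i}^{(i)})|=a_{n_i}^{(i)}$ convert the large-deviation bounds into the announced form. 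The uniform use of the parameters $m_i$ allows the cases $\ga_i>1$ and $\ga_i=1$ to be treated simultaneously, avoiding a bifurcation of the argument.
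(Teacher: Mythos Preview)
Your approach is correct and follows the same overall strategy as the paper---decompose the sum over $n$ according to its position relative to $n_1$ and $n_2$, apply Theorem~\ref{thm:locallimit2} pointwise, and sum. The paper's decomposition in part~(i) is however organized differently: instead of narrow blocks $A_1,A_2$ of widths $m_1,m_2$, it sets $m=(n_2-n_1)/2=K/2$ and uses the two wide blocks $[n_1-m,n_1+m]$ and $[n_2-m,n_2+m]$, which \emph{meet} at the midpoint (so there is no separate intermediate block). The key device that makes this work is a preliminary logarithmic estimate (equation~\eqref{sumlog1}): $\sum_{|n-n_i|\le m}\big(a_n^{(i)}\vee|x_i-b_n^{(i)}|\big)^{-1}\le C|\mu_i|^{-1}\log(1+m/m_i)$, which packages the summation over each wide block into a single logarithm, yielding the four terms \eqref{finterm1}--\eqref{finterm4}. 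This avoids the explicit crossover analysis you sketch for the intermediate block.

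Two exposition issues in your sketch are worth flagging. First, the integrand you write for the intermediate block, $\gp_i(u|\mu_i|)u^{-1-\gamma_i}/((K-u)|\mu_{3-i}|)$, is the $i=1$ choice only; if you actually used $i=1$ on the half $u\in[m_1,K/2]$ the bound would be $(K/m_2)^{-1}$, which exceeds the claimed estimate. You must use the $\min_i$ properly (i.e.\ $i=2$ near $u\asymp m_1$, $i=1$ near $K-u\asymp m_2$), and then the endpoint contributions match \eqref{finterm1}--\eqref{finterm3}. Second, your ``complement'' also contains the outer pieces $[n_1/2,n_1-m_1]$ and $[n_2+m_2,3n_2/2]$ (not just the intermediate block); these are handled in the paper by \eqref{finterm2} and \eqref{finterm4} and produce the $(K/m_2)^{-1}(K/m_1)^{-\ga_1+\gd}R^{(1)}$ and $(K/m_2)^{-1-\ga_2+\gd}R^{(2)}$ contributions---so your statement that $A_2$ gives ``$(K/m_1)^{-1-\ga_1+\gd}$'' is slightly off (it is $(K/m_2)^{-1}(K/m_1)^{-\ga_1+\gd}$). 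For part~(ii) your crossover argument at $n\asymp n_2^{\gamma_2/\gamma_1}$ is exactly the paper's choice of the splitting point $m$.
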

We stress that in the case $\ga_1,\ga_2 >1$, then we can replace $n_i$ by $x_i/\mu_i$ ($x_i$ and $\mu_i$ with the same sign) and $m_i$ by $a_{|x_i|}^{(i)}$.

\subsubsection*{About the balanced case}

In the balanced case ($a_n^{(i)} \equiv a_n$), Theorem~\ref{thm:casII-IIIa} gives the following:

\smallskip
$\ast$ If $\ga>1$,  $n_i =x_i/\mu_i$, and $|n_1-n_2| = |x_1/\mu_1 - x_2/\mu_2|$: the bound
\eqref{casII-IIIawaybi} (together with \eqref{balancedga>1} for the case $|s|\leq a_r$) gives, for any $ |s|\leq r$
\begin{equation}
\label{balancedawayga>1}
G\big( (r,  \lfloor \tfrac{\mu_2}{\mu_1} r \rfloor +s) \big) \leq 
\frac{C }{a_{r}}    \Big( 1\wedge \Big( \frac{|s|}{a_r} \Big)^{-(1+\ga)+\gd} R^{(2)}(|s|)   \Big)
\end{equation}
($R^{(2)}(|s|) = 1$ if $\ga<2$ and $R^{(2)}(|s|)= |s|^{2-\gamma_2} + e^{- c |s|/a_r}$ if $\ga=2$).
For $|s|\geq r$, then \eqref{casII-IIIawaybii} gives that $G\big((r,  \lfloor \tfrac{\mu_2}{\mu_1} r \rfloor +s) \big) \leq C_{\gd} |s|^{-\gamma_2+\gd}$.

\smallskip
$\ast$ If $\ga=1$ and $\mu_1,\mu_2 \in\bbR^*$, then we have $ |\mu_i-\mu_i(a_n)| = O(L(a_n))|$, so  $ |n_1-n_2| = |x_1 / \mu_1(a_{n_1}) -n_2/\mu_2(a_{n_2})| =  |x_1 /\mu_1 - x_2 /\mu_2|  + O(n_1 L(a_{n_1}))$, provided that $ x_1\asymp x_2$ (equivalently $n_1 \asymp n_2$)---note also that $n_1 L(a_{n_1}) =O(a_{n_1})$.
We therefore get the same conclusion as in~\eqref{balancedawayga>1}. The case $|s|\geq r$ is similar to the case $\ga>1$ above.

\smallskip
$\ast$ If $\ga=1$ with $|\mu_i|=+\infty$ or $0$, we assume additionally that the distribution is \emph{symmetric}: we have $\mu_i(n)\equiv \mu(n)$ (we actually only need this for $n$ large).
Then, using Claim~\ref{claim:mu} below, we have $|\mu(a_{n_1}) -\mu(a_{n_2}) | = O(L(a_{n_2}))$ provided that $x_1/x_2$ (hence $n_1/n_2$) is bounded away from $0$ and $+\infty$: we get
$|n_1 \mu(a_{n_1})-  n_2\mu(a_{n_2}) | = |n_1 -n_2|\mu(a_{n_1}) + O(n_2 L(a_{n_2}))$,
with  $n_2 L(a_{n_2}) =O(a_{n_2}) =O(a_{n_1})$. It gives, as long as $x_1\asymp x_2$,  that $|n_1-n_2| \leq \mu(x_1) |x_1-x_2| + O(a_{n_1})$. Using \eqref{casII-IIIawaybi} (and~\eqref{balancedga=1} for the case $s\leq a_{r/\mu(r)}$), we obtain that for any $|s|\leq r$
\begin{equation}
\label{balancedawayga=1}
G\big( (r,r+s)\big) \leq \frac{C}{|\mu(r)| a_{r/|\mu(r)|}} \Big(  \frac{s }{a_{r/|\mu(r)|}} \vee 1 \Big)^{-2+\gd} \, .
\end{equation}
We used that $m_1= a_{n_1}/|\mu(a_{n_1})| \sim a_{r/|\mu(r)|} /|\mu(r)|$ ($m_1=m_2$).
In the case $|s|\geq r$, then applying \eqref{casII-IIIawaybii} gives that $G((r,r+s))\leq C_{\gd} |s|^{-1 +\gd}$, using also that $n_2\geq c_{\gd'}\,  s^{1-\gd'}$.

\smallskip
We mention  that assumption \eqref{eq:Will} would not improve much \eqref{balancedawayga>1}-\eqref{balancedawayga=1}: the improvement would be only at the level of the slowly varying function, that are absorbed by the exponent~$\gd$. We refer to the end of Section~\ref{sec:casII-IIIawayb} for a discussion.

\subsection{Case II-III (\emph{non-zero mean, mixed}), subcase (b): $b_n^{(i_0)} \neq 0$, $b_n^{(i_1)} \equiv 0$}

Here, we consider again the setting of cases II and III of Section~\ref{sec:diag}, in the case where the second coordinate is ``centered''.

\begin{theorem}
\label{thm:casII-IIIa}
Suppose that Assumption~\ref{hyp:2} holds, and that there is $\{i_0,i_1\}= \{1,2\}$ such that: $b_n^{(i_1)}\equiv 0$ and, either $\ga_{i_0}\geq 1$ and $\mu_{i_0}\in \bbR^*$, or $\ga_{i_0}=1$ and $p_{i_0}\neq q_{i_0}$. Recall the definition~\eqref{def:ni}: we have $n_{i_0}\sim |x_{i_0}| / |\mu_{i_0}(|x_{i_0}|)|$, and $n_{i_1}\sim |x_{i_1}|^{-\ga_{i_1}} \phi_{i_1}(|x_{i_1}|)^{-1}$  with $\phi_{i_1}= L_{i_1}$ if $\ga_{i_1}\in (0,2)$ and $\phi_{i_1}=\sigma_{i_1}$ if $\ga_{i_1}=2$.
There is a constant $C$ and for any $\gd>0$ there is a constant $C_{\gd}$ such that for any $\x \in \bbZ^2$:

\smallskip
{\rm (i)} If $n_{i_1} \leq n_{i_0}$,
\begin{equation}
\label{casIaway}
G(\x) \le \frac{C}{ |\mu_{i_0}(a_{n_{i_0}}^{(i_0)})| a_{n_{i_0}}^{(i_1)} } + 
\begin{cases}
C_{\gd} \, n_{i_1}^{2-1/\ga_{i_1}}   |x_{i_0}|^{-(1+\gamma_{i_0}) +\gd} & \tif \ga_{i_1} \leq 1/2\, ,\\
0 & \tif \ga_{i_1}>1/2   \, .
\end{cases}
\end{equation}
If $\bS_n$ is a renewal process (necessarily $\ga_{i_1}\in(0,1)$), then there is an exponent $\zeta_{\gd}>0$ such that
$G(\x) \leq  C_{\gd} \,  (n_{i_1})^{2-1/\ga_{i_1}}   |x_{i_0}|^{-(1+\gamma_{i_0}) +\gd} + e^{ - c ( n_{i_0}/n_{i_1} )^{\zeta_{\gd}} }  \, .$

\smallskip
{\rm (ii)} If $n_{i_1} \geq n_{i_0}$, we set $m_\gd:= (|x_{i_1}|^{\gamma_{i_1}/\gamma_{i_0} +\gd} \vee (n_{i_0})^{1+\gd}) \wedge (n_{i_1})^{1-\gd}$ (and $m_\gd=+\infty$ if $(n_{i_0})^{1+\gd} > (n_{i_1})^{1-\gd}$),  and we have
\begin{equation}
\label{casIawayii}
G(\x) \leq \frac{C_{\gd}}{ a_{n_{i_1}}^{(1)} } \, \big(  1\wedge m_{\gd}\,  |x_{i_1}|^{-\gamma_{i_1}}   \big)
\end{equation}
If $\bS_n$ is a renewal process,
$G(\x) \leq  \frac{C_{\gd}  n_{i_0}^{\gd}}{ a_{n_{i_1}}^{(i_1)}} \big( n_{i_0} |x_{i_1}|^{-\ga_{i_1} +\gd} + e^{- c_{\gd} (n_{i_1}/n_{i_0})^{1-\gd} }  \ind_{\{\ga_{i_1}=2\}}\big)  \, . $
\end{theorem}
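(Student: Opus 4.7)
The plan is to split $G(\x)=\sum_{n\geq 1}\bP(\bS_n=\x)$ according to how $n$ compares to the two typical scales: $n_{i_0}$ (drift time for the first coordinate) and $n_{i_1}$ (fluctuation time for the second). Where both coordinates simultaneously lie in their Griffin--Doney local regime, we use the local limit theorem~\eqref{LLT}; otherwise we use the sharp local large deviation bound of Theorem~\ref{thm:locallimit2}, selecting in its minimum whichever coordinate is in a deviation regime.

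\emph{Case (i), $n_{i_1}\leq n_{i_0}$.} Set $m_{i_0}:=a_{n_{i_0}}^{(i_0)}/|\mu_{i_0}(a_{n_{i_0}}^{(i_0)})|$. In the central window $|n-n_{i_0}|\leq M m_{i_0}$ one has $|x_{i_0}-b_n^{(i_0)}|=O(a_n^{(i_0)})$ and, since $n\geq n_{i_1}$, also $|x_{i_1}|\leq a_n^{(i_1)}$; then~\eqref{LLT} yields $\bP(\bS_n=\x)\leq C/(a_n^{(i_0)}a_n^{(i_1)})$, and summing over the window of length $\asymp m_{i_0}$ produces the leading term in~\eqref{casIaway}. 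Outside this window, Theorem~\ref{thm:locallimit2} with the $i_0$-term of its minimum gives
\[
\bP(\bS_n=\x)\leq \frac{C\,n\,\gp_{i_0}(|x_{i_0}|)\,|x_{i_0}|^{-(1+\gamma_{i_0})}}{\max(|x_{i_1}|,a_n^{(i_1)})}.
\]
I split the residual sum at $n=n_{i_1}$: the portion $n\leq n_{i_1}$ contributes $\asymp n_{i_1}^{2-1/\ga_{i_1}}\gp_{i_0}(|x_{i_0}|)|x_{i_0}|^{-(1+\gamma_{i_0})}$, which matches the stated residual; the portion $n\in[n_{i_1},n_{i_0}/2]$ has summand of regular-variation index $1-1/\ga_{i_1}$, dominated by its lower endpoint when $\ga_{i_1}\leq 1/2$ (yielding the same residual) and by its upper endpoint when $\ga_{i_1}>1/2$ (in which case, since $\gamma_{i_0}\geq \ga_{i_0}\geq 1$ by Assumption~\ref{hyp:1}, it is absorbed into the leading term up to the $\gd$-loss). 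The tail $n\geq 2n_{i_0}$ is treated analogously with $|\hat x_{n,i_0}|\asymp n|\mu_{i_0}|$ in place of $|x_{i_0}|$, and is strictly smaller. For renewal processes, $\bS_n\leq\x$ componentwise forces $n\leq|x_{i_0}|\wedge|x_{i_1}|$, and a standard Cram\'er argument turns the polynomial residual into the exponentially small term announced in~\eqref{casIaway}.

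\emph{Case (ii), $n_{i_1}\geq n_{i_0}$.} No single $n$ places both coordinates in the local regime, so Theorem~\ref{thm:locallimit2} is applied throughout, now with the $i_1$-term of its minimum:
\[
\bP(\bS_n=\x)\leq \frac{C\,n\,\gp_{i_1}(|x_{i_1}|)\,|x_{i_1}|^{-\gamma_{i_1}}}{\max(|\hat x_{n,i_0}|,a_n^{(i_0)})\max(|x_{i_1}|,a_n^{(i_1)})}.
\]
For $n\in[2n_{i_0},n_{i_1}]$ the first denominator is $\asymp n|\mu_{i_0}|$ and the second is $|x_{i_1}|$, whereas for $n\geq n_{i_1}$ the second becomes $a_n^{(i_1)}$. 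Careful integration gives a bound of the form $(1/a_{n_{i_1}}^{(i_1)})\cdot N_\mathrm{eff}\,\gp_{i_1}(|x_{i_1}|)|x_{i_1}|^{-\gamma_{i_1}}$, where $N_\mathrm{eff}$ is the effective range of $n$ contributing. This $N_\mathrm{eff}$ is capped above by $(n_{i_1})^{1-\gd}$ via a Cram\'er-type bound (reaching $x_{i_1}$ in substantially fewer steps is exponentially unlikely), and is bounded below by the medium-$n$ and small-$n$ contributions $|x_{i_1}|^{\gamma_{i_1}/\gamma_{i_0}+\gd}$ and $(n_{i_0})^{1+\gd}$ respectively; combined with the trivial estimate $G(\x)\leq C/a_{n_{i_1}}^{(i_1)}$ from~\eqref{LLT} near $n_{i_1}$, this reproduces the structure $\min\{1,m_\gd|x_{i_1}|^{-\gamma_{i_1}}\}$ of~\eqref{casIawayii}. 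The range $n\leq n_{i_0}$ is of lower order and handled as in~(i). The renewal refinement again uses $\bS_n\leq\x$ together with a Cram\'er bound.

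The main technical challenge is the bookkeeping of slowly varying functions throughout these sums, and in particular verifying that all polylogarithmic factors can be absorbed uniformly into the arbitrary loss $\gd$; Potter's bounds and the smoothness conventions of Section~\ref{sec:convention} make this routine, if tedious. A secondary subtlety, essential to the renewal improvements, is the derivation of Cram\'er-type upper tail estimates for $\bP(\bS_n\geq\x)$ starting from Assumption~\ref{hyp:1}, via standard exponential-moment arguments applied to increments truncated at the appropriate scale.
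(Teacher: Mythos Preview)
Your treatment of case~(i) is essentially the paper's argument: split at $n_{i_0}/2$ (or equivalently isolate a central window of width $\asymp m_{i_0}$), use the local limit theorem and the estimates of Sections~\ref{sec:casII}--\ref{sec:casIII} to control $\sum_{n\geq n_{i_0}/2}$, and apply Theorem~\ref{thm:locallimit2} with the $i_0$-coordinate for $n\leq n_{i_0}/2$, splitting the resulting sum at $n_{i_1}$. You gloss over the range $n\in[n_{i_0}/2,2n_{i_0}]$ outside the central window (where $|x_{i_0}-b_n^{(i_0)}|\asymp |n-n_{i_0}|\,|\mu_{i_0}|$ rather than $|x_{i_0}|$), but this is precisely the content of~\eqref{eq:casII-IIIaway1111}, so the approach is the same.

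Case~(ii), however, has a genuine gap. You state that you apply Theorem~\ref{thm:locallimit2} throughout with the $i_1$-term of the minimum, but this cannot produce the component $|x_{i_1}|^{\gamma_{i_1}/\gamma_{i_0}}$ of $m_\gd$. Using only the $i_1$-term, the summand for $n\in[2n_{i_0},n_{i_1}]$ is essentially constant in $n$ (the factor $n$ cancels the denominator $n|\mu_{i_0}|$), and the resulting sum is $\asymp n_{i_1}\,\gp_{i_1}(|x_{i_1}|)\,|x_{i_1}|^{-(1+\gamma_{i_1})}\asymp 1/a_{n_{i_1}}^{(i_1)}$, i.e.\ only the trivial bound. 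The paper instead chooses a cutoff $m\in(2n_{i_0},n_{i_1}/2)$ and uses \emph{both} terms of the minimum: the $i_1$-term for $n\leq m$ (giving $\lesssim m^{1+\gd}|x_{i_1}|^{-(1+\gamma_{i_1})}$ via~\eqref{sumlog1}) and the $i_0$-term for $n>m$ (where $|x_{i_0}-b_n^{(i_0)}|\asymp b_n^{(i_0)}$, giving $\lesssim m^{1-\gamma_{i_0}}/a_{n_{i_1}}^{(i_1)}$). Balancing $m^{1+\gd}|x_{i_1}|^{-\gamma_{i_1}}$ against $m^{1-\gamma_{i_0}}$ yields $m\sim |x_{i_1}|^{\gamma_{i_1}/\gamma_{i_0}}$; the constraints $m\geq 2n_{i_0}$ and $m\leq n_{i_1}^{1-\gd}$ give the other components of $m_\gd$. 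Your attribution of the cap $(n_{i_1})^{1-\gd}$ to a ``Cram\'er-type bound'' is also incorrect: there are no exponential moments here, and this constraint is in the paper only so that the Gaussian tail $e^{-c(x_{i_1}/a_m^{(i_1)})^2}$ (present when $\ga_{i_1}=2$) becomes negligible. Finally, for the renewal refinement in~(ii) the paper does not invoke Cram\'er at all; it uses the trivial fact that $\bP(\bS_n=\x)=0$ once $n>x_{i_0}$, which immediately restricts the sum to $n\leq 2n_{i_0}\mu_{i_0}(a_{n_{i_0}}^{(i_0)})$.
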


\noindent
\textit{Notational warning}: In the rest of the paper, we use $c,C,c',C',$... as generic constants, and we will keep the dependence on parameters when necessary, writing for example $c_{\gep},C_{\gep}$ for constants depending on a parameter $\gep$.


\section{Proof of the local large deviations}
\label{sec:prooflocallimit}

In this section, we prove the local limit theorems of Section~\ref{sec:locallimit}: Theorem~\ref{thm:locallimit1} in Section~\ref{sec:simplelocal}, Theorem~\ref{thm:locallimit2} in Section~\ref{sec:localharder}, and Theorem~\ref{thm:locallimit3} in Section~\ref{sec:localWill}.
But first of all, let us  recall some univariate large deviation results.

\subsection{Univariate large deviations: a reminder of Fuk-Nagaev inequalities}
\label{sec:univariate}

We start by giving a brief reminder of useful large deviation results for univariate random walks (\textit{i.e.}\ we focus on $S^{(1)}$) in the domain of attraction of an $\ga_1$-stable distribution---this will be useful throuhout the section. Most of these estimates can be found in \cite{cf:Nag79}, but the case $\ga_1=1$ was improved recently, cf.\ \cite{cf:B17}.
This will enable us to obtain local limit theorems for multivariate random walks in the next section.

In the rest of the section, we denote $M_n^{(i)} := \max_{1\le k\le n} X_k^{(i)}$. We refer to Section~5 in \cite{cf:B17} for an overview on how to derive the following statement from \cite{cf:Nag79}.

\begin{theorem}
\label{thm:fuknagaev}
Suppose that Assumption \ref{hyp:1} holds. There are constants $c,c'$ such that

$\ast$ if $\ga_1 \in(0,1)\cup  (1,2)$,  for any $1\leq y\leq x$
\[\bP \big( S_{n}^{(1)} - b_n^{(1)}  \geq x ; M_n^{(1)} \leq y \big) \leq  \Big( c \frac{y}{x} n  L_1(y) y^{-\ga_1}  \Big)^{ x/y}  \, ; \]

$\ast$ if $\ga_1=1$, for every $\gep>0$, there is some $C_{\gep}>0$ such that, for any $x \ge C_{\gep} a_n^{(1)}$ and $1\leq y\le x$
\[
\bP \big( S_{n}^{(1)} - b_n^{(1)} \geq x ; M_n^{(1)} \leq y \big) \leq 
\Big( c \frac{y}{x} n L_1(y) y^{-1} \Big)^{ (1-\gep) x/y} + e^{ -  ( x/a_n^{(1)}  )^{1/\gep} }  \, ;
\]

$\ast$ if $\ga_1 =2$, for any $y\leq x$
\[\bP \big(  S_{n}^{(1)} - b_n^{(1)}  \geq x ; M_n^{(1)} \leq y \big) \leq 
\Big(c \frac{ y}{ x} n y^{-\gamma_1} \gp_1 (y)   \Big)^{ \frac{x}{2 y} } +   C e^{ - c \frac{x^2}{n \sigma_1(y) } } \, .
\]
\end{theorem}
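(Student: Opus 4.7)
The plan is to invoke (and, for $\ga_1=1$, adapt) the classical Fuk--Nagaev inequalities: I would rely on \cite{cf:Nag79} for $\ga_1\ne 1$ and on the refinement from \cite[Sect.~5]{cf:B17} for $\ga_1=1$. In all three regimes the starting point is the same: on the event $\{M_n^{(1)}\le y\}$ one has $S_n^{(1)}=\tilde S_n^{(1)}:=\sum_{k=1}^n (X_k^{(1)}\wedge y)$, so it suffices to bound $\bP(\tilde S_n^{(1)}-b_n^{(1)}\ge x)$ via an exponential Markov inequality applied to the truncated walk, namely
\[
\bP\big(\tilde S_n^{(1)}-b_n^{(1)}\ge x\big) \le e^{-\theta (x+b_n^{(1)})}\, \bE\big[e^{\theta (X_1^{(1)}\wedge y)}\big]^n \quad \text{ for any } \theta>0.
\]

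For $\ga_1\in(0,1)\cup(1,2)$, one expands the Laplace transform via integration by parts to get $\bE[e^{\theta(X_1^{(1)}\wedge y)}]\le 1+\theta\,\bE[X_1^{(1)}\wedge y]+C\theta\int_0^y(e^{\theta u}-1)\bar F_1(u)\dd u$, and under Assumption~\ref{hyp:1} the last integral is of order $e^{\theta y}L_1(y)y^{-\ga_1}/(\theta y)$ as soon as $\theta y$ is bounded away from $0$. Choosing $\theta$ of order $y^{-1}\log\big(cx/(nyL_1(y)y^{-\ga_1})\big)$ and absorbing the recentering (trivially if $\ga_1<1$ since $b_n^{(1)}\equiv 0$; using $|\bE[X_1^{(1)}\wedge y]-\mu_1|=O(L_1(y)y^{1-\ga_1})$ if $\ga_1>1$) yields the announced $(c(y/x)nL_1(y)y^{-\ga_1})^{x/y}$. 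The case $\ga_1=2$ needs one additional ingredient: the truncated variables have variance comparable to $\sigma_1(y)$, and a second-order Taylor expansion of the Laplace transform produces the Gaussian contribution $e^{-cx^2/(n\sigma_1(y))}$; the power-law term follows from $\bar F_1(u)\le \gp_1(u)u^{-\gamma_1}$ in Assumption~\ref{hyp:1}, which is why the exponent $\gamma_1$ appears rather than $\ga_1=2$.

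The genuinely delicate case is $\ga_1=1$, for the following reason: the recentering $b_n^{(1)}=n\mu_1(a_n^{(1)})$ involves the slowly varying sequence $a_n^{(1)}$, whereas truncating at level $y$ naturally centers $\tilde S_n^{(1)}$ by $n\mu_1(y)$. The mismatch $n(\mu_1(y)-\mu_1(a_n^{(1)}))$ is of order $nL_1(a_n^{(1)})\log(y/a_n^{(1)})$, and comparing it to $x$ is what forces the loss of the factor $(1-\gep)$ in the exponent as well as the residual term $e^{-(x/a_n^{(1)})^{1/\gep}}$, which absorbs the regime $y\ll a_n^{(1)}$ where the truncation argument breaks down. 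This is exactly the analysis carried out in \cite[Sect.~5]{cf:B17}, to which I would defer for the precise derivation, rather than reproducing it here.
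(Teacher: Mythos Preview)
Your proposal is correct and matches the paper's treatment: the paper does not reprove Theorem~\ref{thm:fuknagaev} either, but simply attributes the case $\ga_1\in(0,1)\cup(1,2)$ to \cite[Thms.~1.1--1.2]{cf:Nag79} (with a pointer to \cite[Sect.~3]{cf:CD16} for a simpler argument), the case $\ga_1=2$ to \cite[Cor.~1.7]{cf:Nag79}, and the case $\ga_1=1$ to \cite[Thm.~2.2]{cf:B17}. Your sketch of the underlying Chernoff/exponential-Markov mechanism and your diagnosis of the $\ga_1=1$ difficulty (the mismatch between the natural truncated centering $n\mu_1(y)$ and $b_n^{(1)}=n\mu_1(a_n^{(1)})$) are accurate and indeed constitute the content of those references.
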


The case $\ga_1\in (0,1)\cup(1,2)$ is given by Theorems 1.1 and 1.2 in \cite{cf:Nag79} (we also refer to Section 3 of \cite{cf:CD16}, which contains a simpler proof of that fact). The case $\ga_1 = 1$ is given in \cite[Theorem~2.2]{cf:B17}. The case $\ga_1=2$ is given by Corollary 1.7 in \cite{cf:Nag79}.

As a consequence of Theorem~\ref{thm:fuknagaev},  there is a constant $c_0$ such that, whenever $x\ge a_n^{(1)}$,
\begin{equation}
\label{eq:largedev}
\bP \big(  S_n^{(1)} -b_n^{(1)} \ge x \big) \le c_0 n \gp_1(x) x^{-\gamma_1}  +  c_0 e^{ -  \tfrac{ x^2}{c_0 n \sigma_1(x)} }\ind_{\{\ga_1=2\}} \, . 
\end{equation}
Indeed, the left-hand side is bounded by $\bP \big( M_n^{(1)} \ge x/4 \big) + \bP \big( S_{n}^{(1)} - b_n^{(1)} \geq x ; M_n^{(1)} \leq x/4 \big)$.
Using a union bound, and because of Assumption \ref{hyp:1}, the first term is bounded by a constant times $n \gp_1(x) x^{-\gamma_1} $. For the second term, we use Theorem~\ref{thm:fuknagaev}, which gives that 

- if $\ga_1\in (0,1)\cup(1,2)$, it is bounded by a constant times  $\big(  n  x^{-\ga_1} L_1(x) \big)^{4}$;

- if $\ga_1=1$, is it bounded by a constant times   $ \big(  n   L_1(x) x^{-1} \big)^{4(1-\gep)} +  e^{ - c (x/a_n^{(1)})^{1/\gep}}$ ;

- if $\ga_1=2$, it is bounded by $\big(  n  x^{-\gamma_1} \gp_1(x) \big)^{2} + e^{- c x^2/(n\sigma_1(x)) }$.

\smallskip
Another useful consequence of Theorem~\ref{thm:fuknagaev} is the following: let $C,C'$ be two (large) constants, with $C' < C/10$, then there is a constant $c''$ such that for any $x\geq C a_n^{(1)}$, we have
\begin{align}
\label{consequencefuknag}
\bP \big(  S_n^{(1)} -b_n^{(1)} \ge x ,\, M_n^{(1)} \leq C' a_n^{(1)} \big) \leq  (n \gp_1(x) x^{-\gamma_1})^2 e^{ - c'' x_1/a_n^{(1)}} + e^{- c'' (x_1/a_n^{(1)})^2} \ind_{\{\ga_1=2\}} \, .
\end{align}
We used $(n \gp_1(x) x^{-\gamma_1})^2$ for technical purposes (it is needed in the following), but the bound is also valid without the square (or even without this term), bounding $n \gp_1(x) x^{-\gamma_1}$ by $1$ if $x$ is larger than $C a_n^{(1)}$.

Indeed, Theorem~\ref{thm:fuknagaev} gives that the left-hand side is bounded by
\begin{align*}
\Big( c n \gp_1(a_n^{(1)}) &(a_n^{(1)})^{-\gamma_1} \ \frac{a_n^{(1)}}{x_1} \Big)^{c' x_1/a_n^{(1)}  } + e^{-c' x_1/a_n^{(1)}} \ind_{\{\ga_1=1\}}  +  e^{- c' x_1^2 / (n\sigma_1(a_n^{(1)}) )}  \ind_{\{\ga_1=2\}} \, .
\end{align*}
To obtain \eqref{consequencefuknag} from this, we use the following. (1) If $\ga_1\in (0,2)$ then $L_1=\gp_1$, $\gamma_1=\ga_1$ and  $n L_1(a_n^{(1)}) \sim (a_n^{(1)})^{\ga_1} L_1(a_n^{(1)})^{-1}$ so the first and second term are smaller than $\exp(-c' x_1/a_n^{(1)})$ provided that $x_1/a_n^{(1)}\geq C$. Then we use that $  \exp(-c' x_1/a_n^{(1)})$ is bounded by a constant times $(x_1/a_n^{(1)})^{- 4\ga_1 } \exp(-c'' x_1/a_n^{(1)})$ with $c''>c'$ since $x_1\geq C a_n^{(1)}$, and then that $ (x_1/a_n^{(1)})^{-4\ga_1}$ is bounded by a constant times $(n L_1 (x_1) x_1^{-\ga_1})^2$ thanks to Potter's bound \cite[Thm.~1.5.6]{cf:BGT} (recall the definition~\eqref{def:an} of $a_n^{(1)}$). (2) If $\ga_1=2$,  $\gp_1(a_n^{(1)}) (a_n^{(1)})^{-\gamma_1}$ is bounded above by a constant times $\gp(x_1) x_1^{-\gamma_1} (a_n^{(1)}/x_1)^{-1}$ (by Potter's bound, since $\gamma_1>1$).
Therefore, the first term is bounded by  $(n \gp(x_1) x_1^{-\gamma_1})^{ c x_1/a_n^{(1)}}$ times $\exp(-c'' x_1/a_n^{(1)})$ since $x_1\geq C a_n^{(1)}$. We also used that $n \sigma_1(a_n^{(1)}) \sim a_n^{(1)}$ when $\ga=2$.


\subsection{Proof of Theorem \ref{thm:locallimit1}}
\label{sec:simplelocal}

We fix $i\in\{1,\ldots, d\}$, and consider some $\x\in\bbZ^d$ with $x_i\ge a_n^{(i)}$. Recall that $\hat \bS_n = \bS_n - \lfloor \mathbf{b}_n \rfloor$.
We denote $\mathbf{d}_n := \frac{1}{2} \lfloor \mathbf{b}_n \rfloor -\mathbf{b}_{\lfloor n/2 \rfloor}$, so that $\bS_n - \frac{1}{2}\lfloor \mathbf{b}_n \rfloor  = \hat \bS_{\lfloor n/2 \rfloor} -\bd_n$.

We decompose $\bP(\hat\bS_n = \x)$ according to whether $S_{\lfloor  n/2 \rfloor}^{(i)} - \tfrac12 \lfloor b_n^{(i)} \rfloor \ge x_i/2$ or not, so that 
\begin{align}
\bP \big( \hat\bS_n &= \x \big) \notag \\
& \le \bP\big( \hat\bS_n = \x  ; S_{\lfloor  n/2 \rfloor}^{(i)} - \tfrac12 \lfloor b_n^{(i)} \rfloor \ge x_i/2   \big) + \bP \big( \hat\bS_n = \x  ;  S_n^{(i)}- S_{\lfloor  n/2 \rfloor}^{(i)} - \tfrac12 \lfloor b_n^{(i)} \rfloor \ge x_i/2   \big)\, .
\label{splithalf}
\end{align}
The two terms are treated similarly, so we only focus on the first one.
We have
\begin{align}
\label{firstpiece}
&\bP\big( \hat\bS_n = \x \,  ;\,  \hat S_{\lfloor  n/2 \rfloor}^{(i)} \ge x_i/2  +d_n^{(i)} \big)  \\
&= \sumtwo{\z \in \bbZ^d}{z_i \ge \tfrac12 \lfloor b_n^{(i)} \rfloor + x_i/2} \bP\big( \bS_{\lfloor n/2\rfloor} =\z \big) \bP \big( \bS_n - \bS_{\lfloor n/2\rfloor}  =  \lfloor \mathbf{b}_n \rfloor+ \x-\z \big)  \notag \\
& \le \frac{C}{a_{n}^{(1)} \cdots a_n^{(d)}} \!\!\!  \sumtwo{\z \in \bbZ^d}{z_i \ge \tfrac12 \lfloor b_n^{(i)} \rfloor + x_i/2} \!\!\!   \bP\big( \bS_{\lfloor n/2\rfloor} =\z \big)
 =  \frac{C}{a_{n}^{(1)} \cdots a_n^{(d)}}  \bP \big( S_{\lfloor  n/2 \rfloor}^{(i)} - \tfrac12 \lfloor b_n^{(i)} \rfloor \ge x_i/2   \big) \, ,
\notag
\end{align}
where we used the local limit theorem \eqref{LLT} to get that there is a constant $C>0$ such that for any $k\ge 1$ and $\mathbf{y} \in \bbZ^d$, we have $\bP(\bS_k = \mathbf{y}) \le C ( a_k^{(1)} \cdots a_k^{(d)} )^{-1}$.

Then, in order to use \eqref{eq:largedev} for the last probability, we need to control $\tfrac12 \lfloor b_n^{(i)} \rfloor  - b_{\lfloor n/2 \rfloor}^{(i)}$.
\begin{claim}
\label{claim:dn}
There exists a constant $c>0$ such that for all $n$
\[d_n^{(i)} := \frac{1}{2} \lfloor b^{(i)}_n \rfloor - b^{(i)}_{\lfloor n/2 \rfloor} \ge - c a_n^{(i)} \, . \]
\end{claim}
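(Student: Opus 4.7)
The plan is to distinguish cases based on the definition~\eqref{def:bn} of $b_n^{(i)}$, which has three regimes depending on the value of $\ga_i$. When $\ga_i\in(0,1)$, we have $b_n^{(i)}\equiv 0$ so $d_n^{(i)}=0$ and there is nothing to prove. When $\ga_i>1$, we have $b_n^{(i)}=n\mu_i$, and a direct computation (using $\lfloor n\mu_i\rfloor = n\mu_i + O(1)$ and $\lfloor n/2\rfloor = n/2 + O(1)$) yields $|d_n^{(i)}|\le \tfrac{1}{2}+\tfrac{1}{2}|\mu_i|$, which is bounded uniformly in $n$ and thus certainly $O(1) = o(a_n^{(i)})$ since $a_n^{(i)}$ is regularly varying of positive index $1/\ga_i$ and tends to $+\infty$.

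The only substantive case is $\ga_i=1$, where $b_n^{(i)} = n\mu_i(a_n^{(i)})$ genuinely depends on $n$ through the slowly varying truncated mean. Writing $n':=\lfloor n/2\rfloor$, I would decompose
\[
d_n^{(i)} \;=\; \frac{n}{2}\bigl(\mu_i(a_n^{(i)}) - \mu_i(a_{n'}^{(i)})\bigr) + \bigl(\tfrac{n}{2}-n'\bigr)\,\mu_i(a_{n'}^{(i)}) + O(1).
\]
The second term is controlled by the elementary bound $|\mu_i(x)|\le \bE[|X_1^{(i)}|\ind_{\{|X_1^{(i)}|\le x\}}]\le x$, hence it is $O(a_{n'}^{(i)}) = O(a_n^{(i)})$. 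The first term is the heart of the matter and requires the key estimate
\[
|\mu_i(x) - \mu_i(y)| \;=\; O(L_i(y)) \qquad \text{for } y\le x\le 3y,\ y\to +\infty.
\]
I would prove this by writing $\mu_i(x)-\mu_i(y) = \bE[X_1^{(i)}\ind_{\{y<|X_1^{(i)}|\le x\}}]$, bounding in modulus by the truncated absolute moment $\bE[|X_1^{(i)}|\ind_{\{y<|X_1^{(i)}|\le x\}}] = y\,\bP(|X_1^{(i)}|>y) - x\,\bP(|X_1^{(i)}|>x) + \int_y^x \bP(|X_1^{(i)}|>t)\,dt$ via integration by parts, and applying the tail bound $\bar F_i(t)+F_i(-t)\le CL_i(t)/t$ from~\eqref{hyp:XY} (valid for $\ga_i=1$). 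The boundary term gives $O(L_i(y))$ directly, and the integral gives $O(L_i(y)\log(x/y)) = O(L_i(y))$ on the bounded range $y\le x\le 3y$ by the uniform convergence theorem for slowly varying functions.

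To conclude, I would apply this estimate with $x=a_n^{(i)}$ and $y=a_{n'}^{(i)}$: by regular variation of $a_n^{(i)}$ with index $1/\ga_i=1$, we have $a_n^{(i)}/a_{n'}^{(i)}\to 2$, so $y\le x\le 3y$ for $n$ large (small $n$ being absorbed into the constant). This yields $\tfrac{n}{2}|\mu_i(a_n^{(i)}) - \mu_i(a_{n'}^{(i)})| = O\bigl(nL_i(a_n^{(i)})\bigr) = O(a_n^{(i)})$, where the last equality uses the defining relation $nL_i(a_n^{(i)})/a_n^{(i)}\to 1$ from~\eqref{def:an} when $\ga_i=1$. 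Combining the two contributions gives the two-sided bound $|d_n^{(i)}|\le c\,a_n^{(i)}$, of which the claim asserts only the lower half. The only real obstacle is the increment estimate for the slowly varying truncated mean in the $\ga_i=1$ regime; everything else is essentially bookkeeping.
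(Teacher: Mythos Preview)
Your proof is correct and follows essentially the same approach as the paper: the same three-case split on $\ga_i$, the same decomposition in the $\ga_i=1$ case into $\tfrac{n}{2}(\mu_i(a_n^{(i)})-\mu_i(a_{n'}^{(i)}))$ plus a bounded-coefficient remainder, and the same use of $nL_i(a_n^{(i)})\sim a_n^{(i)}$ to close. The only difference is cosmetic: where the paper invokes Claim~\ref{claim:mu} (imported from \cite{cf:B17}) for the bound $|\mu_i(u)-\mu_i(v)|\le C L_i(v)$ on bounded ratios, you prove exactly this special case directly via the integration-by-parts computation, which is a perfectly valid and self-contained substitute.
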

\begin{proof}
 When $\ga_i \in(0,1)$ we have that $b_n^{(i)}\equiv 0$ so this quantity is equal to $0$. When $\ga_i>1$ we have $b_k^{(i)} = k \mu_i$ in which case  $\tfrac12 \lfloor n\mu_i \rfloor  -  \lfloor n/2\rfloor \mu_i \ge - \mu_i $.
When $\ga_i=1$, this is more delicate but not too hard:
\begin{align*}
\frac n2 \mu_i(a_n^{(i)}) - \lfloor n/2 \rfloor \mu_i(a_{\lfloor n/2 \rfloor}^{(i)}) &\ge \frac n2 \big( \mu_i(a_n^{(i)}) - \mu_i(a_{\lfloor n/2\rfloor}^{(i)}) \big) - |\mu_i(a_{\lfloor n/2\rfloor }) | \\
&\ge  - c\, \big( n L_i(a_n^{(i)}) + |\mu_i(a_n^{(i)})| \big) \ge - c' a_n^{(i)}\, .
\end{align*}
For the second inequality we used \cite[Claim 5.3]{cf:B17} that we reproduce below (separate the positive and negative part of $X_1^{(i)}$), using also that $a_n^{(i)}/a_{\lfloor n/2\rfloor}^{(i)}$ is bounded by a constant.
\end{proof}

\begin{claim}[Claim 5.3 in \cite{cf:B17}]
\label{claim:mu}
Assume that $\ga_i=1$.
For every $\gd>0$, there is a constant $c_{\gd}$ such that for every $u\ge v \ge 1$ we have
\[   \frac{1}{L_i(v)} \big| \mu_i(u) - \mu_i(v) \big|  \le c_{\gd} (u/v)^{\gd} \, .  \]
Additionally, if $ c^{-1}\le u/v \leq c$, we have that $\frac{1}{L_i(v)} \big| \mu_i(u) - \mu_i(v) \big| \leq C |\log (u/v)| $.
\end{claim}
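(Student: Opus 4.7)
\textbf{Plan for the proof of Claim \ref{claim:mu}.}
The starting point is to write, for $u \ge v \ge 1$,
\[
\mu_i(u) - \mu_i(v) \;=\; \bE\big[ X_1^{(i)} \ind_{\{v < |X_1^{(i)}|\le u\}}\big]
\;=\; \int_{v<y\le u} y\, dF_i(y) \;+\; \int_{-u\le y<-v} y\, dF_i(y),
\]
and to apply summation/integration by parts on each piece separately. Using $dF_i(y) = -d\bar F_i(y)$ for $y>0$ and the analogous identity for $F_i(-\cdot)$, after collecting boundary terms one obtains the clean identity
\[
\mu_i(u)-\mu_i(v) \;=\; v\big[\bar F_i(v) - F_i(-v)\big] \;-\; u\big[\bar F_i(u) - F_i(-u)\big] \;+\; \int_v^u \big[\bar F_i(y) - F_i(-y)\big]\, dy.
\]
From this identity everything reduces to controlling $y[\bar F_i(y)-F_i(-y)]$ and $\int_v^u[\bar F_i(y)-F_i(-y)]\,dy$, using Assumption~\ref{hyp:1} (which gives $\bar F_i(y), F_i(-y) \le C L_i(y)/y$, since $\gp_i\propto L_i$ and $\gamma_i=\ga_i=1$) together with the sharp asymptotics \eqref{hyp:XY} that yield $y[\bar F_i(y)-F_i(-y)] = (p_i-q_i)L_i(y) + o(L_i(y))$.

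For the first inequality, I would use only the crude upper bound: $|y[\bar F_i(y)-F_i(-y)]|\le CL_i(y)$ for all $y$, so the boundary contribution is at most $CL_i(v)+CL_i(u)$, while the integral is at most $C\int_v^u L_i(y)/y\,dy$. By Potter's bound \cite[Thm.~1.5.6]{cf:BGT}, for every $\gd>0$ there is $c_\gd$ such that $L_i(y)\le c_\gd L_i(v)(y/v)^\gd$ for all $y\ge v$, which gives both $L_i(u)\le c_\gd L_i(v)(u/v)^\gd$ and $\int_v^u L_i(y)/y\,dy \le c_\gd L_i(v)(u/v)^\gd/\gd$. Combining these yields $|\mu_i(u)-\mu_i(v)|\le c'_\gd L_i(v)(u/v)^\gd$, as required.

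For the second (refined) inequality in the regime $c^{-1}\le u/v\le c$, the crude boundary bound $CL_i(v)$ is not good enough when $|\log(u/v)|$ is small, so the cancellation between $v[\bar F_i(v)-F_i(-v)]$ and $u[\bar F_i(u)-F_i(-u)]$ must be exploited. The plan is to use the smoothness convention from Section~\ref{sec:convention} together with Karamata's representation $L_i(y)=c(y)\exp(\int_1^y \epsilon(t)/t\,dt)$ where $c(y)\to c_\infty>0$ and $\epsilon(y)\to0$; this gives $yL_i'(y) = o(L_i(y))$ uniformly, hence $|L_i(u)-L_i(v)|\le C L_i(v)|\log(u/v)|$ for $u/v$ in a fixed compact subset of $(0,\infty)$ and $v$ large. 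Pairing this with the uniform convergence of the remainders in $y[\bar F_i(y)-F_i(-y)]=(p_i-q_i)L_i(y)+o(L_i(y))$ (again, uniform in $y/v\in[c^{-1},c]$) shows the boundary difference is $\le CL_i(v)|\log(u/v)|$. For the integral, uniform convergence of slowly varying functions gives $L_i(y)/L_i(v)\to1$ uniformly on $[v,cv]$, so $\int_v^u|\bar F_i(y)-F_i(-y)|\,dy \le C L_i(v)\log(u/v)$ directly. For small $v$ (below the threshold where asymptotic estimates take over), the claim is trivial as $L_i(v)$ is bounded away from $0$ and $\mu_i$ is Lipschitz on bounded intervals, up to absorbing constants into $C$.

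The main technical obstacle is the refined boundary cancellation needed for the second bound, since the individual terms $v[\bar F_i(v)-F_i(-v)]$ and $u[\bar F_i(u)-F_i(-u)]$ are each of order $L_i(v)$, whereas one needs their difference to be of the smaller order $L_i(v)|\log(u/v)|$. This is precisely the place where the smoothness convention and Karamata-type control of the slowly varying function $L_i$ are indispensable; the first inequality, by contrast, needs only Potter's bound and is essentially bookkeeping.
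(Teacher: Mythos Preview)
The paper does not prove this claim---it is quoted verbatim from \cite{cf:B17}---so there is no ``paper's proof'' to compare against. Your integration-by-parts identity is correct, and your argument for the first inequality (crude bound on each boundary term plus Potter on the integral) is clean and complete.

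For the second inequality there is a genuine gap. Writing the boundary contribution as $(p_i-q_i)[L_i(v)-L_i(u)]+[R(v)-R(u)]$ with $R(y):=y[\bar F_i(y)-F_i(-y)]-(p_i-q_i)L_i(y)=o(L_i(y))$, your Karamata argument handles the first bracket, but ``uniform convergence of the remainders'' only yields $|R(v)|,|R(u)|=o(L_i(v))$, hence $|R(v)-R(u)|=o(L_i(v))$; this is \emph{not} $O(L_i(v)|\log(u/v)|)$ when $|\log(u/v)|\to0$. In fact the literal bound can fail for the original (non-smoothed) $\mu_i$: take $L_i\equiv c$, put mass $\asymp n^{-2}$ at every $n\ge1$, and add an extra atom $a_k=k^{-2}2^{-k}$ at $n=2^k$; then $\bar F_i(y)\sim c/y$ still holds, yet $\mu_i(2^k)-\mu_i(2^k{-}1)\ge k^{-2}$ while $L_i(v)|\log(u/v)|\asymp 2^{-k}$.

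The fix is to apply the smoothness convention of Section~\ref{sec:convention} \emph{to $\mu_i$ itself}, not merely to $L_i$. Your identity together with \eqref{hyp:XY} shows $(\mu_i(\lambda v)-\mu_i(v))/L_i(v)\to(p_i-q_i)\log\lambda$, i.e.\ $\mu_i$ lies in the de~Haan class $\Pi_{L_i}$; one may then take the smooth representative $\tilde\mu_i(x)=\mu_i(1)+(p_i-q_i)\int_1^x L_i(y)y^{-1}\,dy$, which satisfies $\tilde\mu_i-\mu_i=o(L_i)$ (so $b_n^{(i)}$ changes by $o(a_n^{(i)})$) and $y\tilde\mu_i'(y)=(p_i-q_i)L_i(y)$, whence $|\tilde\mu_i(u)-\tilde\mu_i(v)|=|p_i-q_i|\,\big|\int_v^u L_i(y)y^{-1}\,dy\big|\le C L_i(v)|\log(u/v)|$ directly. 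Alternatively, note that every use of the second inequality in this paper (Sections~\ref{sec:casIImain} and~\ref{sec:casIII}) only needs the weaker consequence that $|\mu_i(u)-\mu_i(v)|/L_i(v)\to0$ when $u/v\to1$, and \emph{that} does follow from your argument, since both boundary terms divided by $L_i(v)$ tend to $p_i-q_i$ and the integral term tends to $0$.
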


Therefore, provided that $x_i \ge C_4 a_n^{(i)}$ with some constant $C_4$ large enough, Claim~\ref{claim:dn} gives that $\tfrac12 \lfloor b_n^{(i)} \rfloor  - b_{\lfloor n/2 \rfloor}^{(i)} \ge - x_i /4$, so that
\begin{equation}
\label{n/2}
\bP\big( S_{\lfloor  n/2 \rfloor}^{(i)} - \tfrac12 \lfloor b_n^{(i)} \rfloor \ge x_i/2   \big)  \le \bP\big( S_{\lfloor  n/2 \rfloor}^{(i)} -  b_{\lfloor n/2 \rfloor}^{(i)}  \ge x_i/4  \big), 
\end{equation}
and then  \eqref{eq:largedev} provides an upper bound. Plugged in \eqref{firstpiece}, this concludes the proof of Theorem~\ref{thm:locallimit1}, possibly by changing the constants to cover the range $x \geq a_n^{(i)}$, $x< C_4 a_n^{(i)}$.\qed

Note that with the same method, using Theorem~\ref{thm:fuknagaev} instead of \eqref{eq:largedev}, one is able to obtain a local version of Theorems~\ref{thm:fuknagaev}.
\begin{proposition}
\label{prop:CD}
There are some $C_4, C_5>0$ such that, for any $\x$ with $x_i \ge C_4 a_n^{(i)}$, and $1\leq y \le x_i$
\begin{equation}
\label{localfuknag}
 \bP \big(\hat\bS_n = \x \, ;\, M_n^{(i)}  \le y\big) \le \frac{C_5}{a_n^{(1)} \cdots a_n^{(d)}  } \bP\big(     \hat S_{\lfloor n/2 \rfloor}^{(i)}\ge x_i/4  ,  \, M_{\lfloor n/2 \rfloor}^{(i)}  \le y\big) 
\end{equation}
\end{proposition}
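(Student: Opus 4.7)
The plan is to mimic directly the proof of Theorem~\ref{thm:locallimit1}, but now carrying the constraint $\{M_n^{(i)} \le y\}$ through the decomposition. First, I would split according to whether the ``large'' contribution to $S_n^{(i)}-b_n^{(i)}$ comes from the first or the second half of the walk, exactly as in~\eqref{splithalf}:
\begin{align*}
\bP\big(\hat\bS_n = \x,\, M_n^{(i)} \le y\big)
&\le \bP\big(\hat\bS_n = \x,\, M_n^{(i)} \le y,\, S_{\lfloor n/2 \rfloor}^{(i)} - \tfrac12 \lfloor b_n^{(i)} \rfloor \ge x_i/2\big) \\
&\quad + \bP\big(\hat\bS_n = \x,\, M_n^{(i)} \le y,\, S_n^{(i)} - S_{\lfloor n/2 \rfloor}^{(i)} - \tfrac12 \lfloor b_n^{(i)} \rfloor \ge x_i/2\big).
\end{align*}
The two pieces are handled symmetrically (using that $\bS_n-\bS_{\lfloor n/2 \rfloor}$ has the distribution of a sum of $\lceil n/2\rceil$ copies of $\bX_1$ and that $a_{\lceil n/2\rceil}^{(i)} \asymp a_{\lfloor n/2\rfloor}^{(i)}$), so I would focus on the first one.

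For that piece, the key observation is that $\{M_n^{(i)}\le y\}$ factors as $\{M_{\lfloor n/2 \rfloor}^{(i)}\le y\}\cap \{\max_{\lfloor n/2 \rfloor < k \leq n} X_k^{(i)} \le y\}$, which allows clean conditioning on $\bS_{\lfloor n/2\rfloor}=\z$. Following the computation in~\eqref{firstpiece} with this extra event attached, I would use the uniform local-limit bound $\bP(\bS_n-\bS_{\lfloor n/2\rfloor}=\mathbf{w}) \le C(a_n^{(1)}\cdots a_n^{(d)})^{-1}$ from~\eqref{LLT} on the second-half factor (bounding the max-constraint indicator by~$1$), and then sum over $\z$ to obtain
\[
\frac{C}{a_n^{(1)}\cdots a_n^{(d)}}\, \bP\big(S_{\lfloor n/2\rfloor}^{(i)} - \tfrac12 \lfloor b_n^{(i)} \rfloor \ge x_i/2,\, M_{\lfloor n/2 \rfloor}^{(i)} \le y\big).
\]

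Finally, I would pass from the shift $\tfrac12 \lfloor b_n^{(i)} \rfloor$ to the natural centering $b_{\lfloor n/2\rfloor}^{(i)}$ by invoking Claim~\ref{claim:dn}: choosing $C_4$ large enough so that $x_i \ge C_4 a_n^{(i)}$ forces $|d_n^{(i)}| \le x_i/4$, and the event above is then contained in $\{\hat S_{\lfloor n/2\rfloor}^{(i)} \ge x_i/4,\, M_{\lfloor n/2\rfloor}^{(i)} \le y\}$, which is precisely the right-hand side of~\eqref{localfuknag}. The argument is almost entirely bookkeeping once the template of Theorem~\ref{thm:locallimit1} is in hand; the only point requiring any care is the factorization of the max event under the independent-halves decomposition, which is automatic, and the verification that Claim~\ref{claim:dn} is quantitatively strong enough to absorb the centering error for all three regimes $\ga_i\in(0,1)$, $\ga_i=1$, $\ga_i>1$ simultaneously---this is exactly what was checked in the proof of Theorem~\ref{thm:locallimit1}.
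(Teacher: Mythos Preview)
Your proposal is correct and follows exactly the approach the paper indicates: the paper states that the proof is a ``straightforward transposition of the proof of Theorem~\ref{thm:locallimit1}'' with details left to the reader, and your write-up carries out precisely that transposition, keeping track of the factorized event $\{M_n^{(i)}\le y\}=\{M_{\lfloor n/2\rfloor}^{(i)}\le y\}\cap\{\max_{\lfloor n/2\rfloor<k\le n}X_k^{(i)}\le y\}$ through the half-split~\eqref{splithalf}, applying the uniform local bound from~\eqref{LLT} on the unconstrained half, and finishing with Claim~\ref{claim:dn} exactly as in~\eqref{n/2}.
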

The proof of this proposition is a straightforward transposition of the proof of Theorem~\ref{thm:locallimit1}, we leave the details to the reader (for the univariate setting, we refer to Proposition~6 in \cite{cf:B17} and its proof).
We also state two other bounds (in dimension $d=2$ for simplicity), that will be useful in the proof of Theorem~\ref{thm:locallimit2}.

\begin{claim}
\label{claim:localfuknag}
There are constants $C_6,C_7$ such that, for any $\x = (x_1,x_2)$ with $x_1 \ge C_6 a_n^{(1)}$, and any $1\leq y\le x_1$
\begin{equation}
\label{claim1}
\bP\big( \hat S_n^{(1)} \ge x_1 , \hat S_n^{(2)} = x_2 , M_n^{(1)} \le y_1\big)
\le \frac{C_7}{a_n^{(2)}} \bP\big( \hat S_{\lfloor n/2 \rfloor}^{(1)} \ge x_1/4 , M_{\lfloor n/2 \rfloor}^{(1)} \le y_1 \big)\, .
\end{equation}
For any $\x = (x_1,x_2)$ with $x_1\ge C_6 a_n^{(1)}$, $x_2\ge C_6 a_n^{(2)}$, and any $1\leq y_1\le x_1$, $1\leq y_2 \le x_2$,
\begin{align}
\label{claim2}
\bP&\big( \hat \bS_n   = \x  , M_n^{(1)} \le y_1, M_n^{(2)} \le y_2 \big) \\
&\leq \frac{C_7}{a_n^{(1)} a_n^{(2)}} 
\bP\big( \hat S_{\lfloor n/4 \rfloor}^{(1)} \ge x_1/16 , M_{\lfloor n/4 \rfloor}^{(1)} \le y_1 \big)^{1/2} \bP\big( \hat S_{\lfloor n/4 \rfloor}^{(2)} \ge x_2/16 , M_{\lfloor n/4 \rfloor}^{(2)} \le y_2 \big)^{1/2}
\, .
\notag
\end{align}
\end{claim}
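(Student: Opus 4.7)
The first bound, \eqref{claim1}, is essentially a direct transcription of the proof of Theorem~\ref{thm:locallimit1} with the extra max-constraint carried through. I would split the walk at $\lfloor n/2\rfloor$ and bound $\bP(\hat S_n^{(1)} \geq x_1, \hat S_n^{(2)} = x_2, M_n^{(1)} \leq y_1)$ by twice the probability of the subcase ``the first half contributes $\geq \tfrac12 \lfloor b_n^{(1)}\rfloor + x_1/2$ to $S^{(1)}$''. Writing out the sum over the value $\z$ of $\bS_{\lfloor n/2\rfloor}$, the first-half factor retains both the lower bound on $\hat S_{\lfloor n/2\rfloor}^{(1)}$ and the max constraint; the second-half factor, which requires $\bS_n - \bS_{\lfloor n/2\rfloor}$ to hit a specific point, is bounded using only the univariate local limit theorem on coordinate~$2$, giving $C/a_n^{(2)}$. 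Summing $z^{(2)}$ freely collapses the first-half factor to $\bP(S_{\lfloor n/2\rfloor}^{(1)} \geq \tfrac12 \lfloor b_n^{(1)}\rfloor + x_1/2, M_{\lfloor n/2\rfloor}^{(1)} \leq y_1)$, and Claim~\ref{claim:dn} together with the assumption $x_1 \geq C_6 a_n^{(1)}$ (for $C_6$ sufficiently large) replaces the threshold by $x_1/4$, yielding~\eqref{claim1}.

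The second bound, \eqref{claim2}, requires a finer decomposition. I would split $\{1,\dots,n\}$ into four blocks $Q_1,\dots,Q_4$ of length $m = \lfloor n/4\rfloor$, write $\hat \bS_n = \sum_{k=1}^{4} \hat\Delta_k + \mathbf{e}_n$ with independent increments $\hat\Delta_k$ distributed as $\hat\bS_m$, and control $|e_n^{(i)}| \leq c\, a_n^{(i)}$ by a slowly-varying estimate analogous to Claim~\ref{claim:dn} (invoking Claim~\ref{claim:mu} when $\ga_i=1$). Taking $C_6$ large enough that $x_i \geq C_6 a_n^{(i)}$ forces $\sum_k \hat\Delta_k^{(i)} \geq x_i/2$ for both $i$, so for each coordinate $i$ separately, pigeonhole yields at least one block with $\hat\Delta_k^{(i)} \geq x_i/8$. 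I would then partition the event on which pair $(k, k')$ of blocks is first large in coordinates $1$ and $2$ respectively, producing only $O(1)$ cases.

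In each case I would sum over the end values of the distinguished block(s); the remaining blocks form a walk of length $\asymp n$ ending at a prescribed point, whose probability is bounded by $C/(a_n^{(1)} a_n^{(2)})$ via the bivariate local limit theorem~\eqref{LLT}. When $k \neq k'$, summing freely over the unconstrained coordinates $z_k^{(2)}, z_{k'}^{(1)}$ separates the two distinguished-block contributions into a product of the single-coordinate ``big + max $\leq y$'' probabilities on scale $m$; each factor being $\leq 1$, it is dominated by its square root, matching the form of~\eqref{claim2}. When $k = k'$, one block carries both lower bounds, and the elementary inequality $\bP(A \cap B) \leq \sqrt{\bP(A)\bP(B)}$ directly produces the desired product of square roots. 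The only point of care --- and the main technical annoyance --- is correctly propagating the recentering corrections through the four blocks so they are absorbed into the slack between the threshold $x_i/8$ arising from the pigeonhole step and the stated $x_i/16$; this is particularly delicate in the $\ga_i=1$ case, where $\mathbf{b}_n$ fluctuates on the slowly-varying scale $a_n^{(i)}$ rather than being sub-diagonal.
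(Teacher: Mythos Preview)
Your argument for \eqref{claim1} is exactly the paper's: split at $\lfloor n/2\rfloor$, apply the univariate local limit theorem on coordinate~$2$ to the second half, sum out, and absorb the recentering via Claim~\ref{claim:dn}.

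For \eqref{claim2} your approach is correct but organizes the argument differently. The paper performs a two-stage halving: it first splits at $\lfloor n/2\rfloor$ into four cases according to whether each coordinate is large at the half-time, treating the ``both large in the same half'' case by Cauchy--Schwarz and the mixed case by invoking Proposition~\ref{prop:CD} (the already-proven one-coordinate local bound) on the second half; a final halving step then reduces the $\lfloor n/2\rfloor$ thresholds to $\lfloor n/4\rfloor$. You instead go directly to four blocks of length $\lfloor n/4\rfloor$, pigeonhole on each coordinate separately, and union-bound over the $O(1)$ choices of $(k,k')$. When $k\neq k'$ the two distinguished blocks are independent and the leftover blocks absorb the local-limit factor, giving a genuine product which you then dominate by its square root; when $k=k'$ you use $\bP(A\cap B)\le\sqrt{\bP(A)\bP(B)}$. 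This is sound: the leftover walk always has length $\ge 2\lfloor n/4\rfloor$, so the bivariate local limit theorem still gives $C/(a_n^{(1)}a_n^{(2)})$, and the slack between the pigeonhole threshold $x_i/8$ and the stated $x_i/16$ comfortably absorbs the recentering error $\mathbf e_n=O(a_n^{(i)})$ once $C_6$ is large. The paper's route is more modular (it recycles Proposition~\ref{prop:CD} as a black box) while yours is more symmetric and avoids the recursive step; both yield the same bound with the same input (Claim~\ref{claim:dn}/\ref{claim:mu} and the local limit theorem~\eqref{LLT}).
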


Then we can use Theorem~\ref{thm:fuknagaev} to control the probabilities in the right-hand sides.

\begin{proof}[Proof of Claim~\ref{claim:localfuknag}]
We prove only \eqref{claim2}, the proof of \eqref{claim1} being identical as that of \eqref{localfuknag}.
We decompose the probability into four parts, according to whether  $S_{\lfloor n/2 \rfloor}^{(i)} - \tfrac12 \lfloor b_n^{(i)} \rfloor \ge x_i/2$ or not, for $i=1,2$: there are two terms we need to control (the other two being symmetric).

(1) The first term we need to control is
\begin{align*}
\bP&\big(\hat \bS_n = \x  , M_n^{(1)} \le y_1, M_n^{(2)} \le y_2 ,  S_{\lfloor n/2 \rfloor}^{(i)} - \tfrac12 \lfloor b_n^{(i)} \rfloor \ge \tfrac12  x_i \ \text{for } i=1,2\big) \\
& \leq  \sumtwo{z_1\ge  \frac12 x_1 + \frac12 \lfloor b_n^{(1)}\rfloor}{z_2 \ge \frac12 x_2 + \frac12 \lfloor b_n^{(2)}\rfloor }
\bP\big( \bS_{\lfloor n/2 \rfloor} = (z_1,z_2) , M_{\lfloor n/2 \rfloor}^{(1)} \le y_1 , M_{\lfloor n/2 \rfloor}^{(2)} \le y_2 \big) \\[-0.9cm]
& \hspace{7cm} \times\bP \big(  \bS_n- \bS_{\lfloor n/2 \rfloor} = (x_1-z_1 ,x_2-z_2) \big) \\[0.3cm]
& \le \frac{C}{a_n^{(1)} a_n^{(2)}} \bP\big( S_{\lfloor n/2 \rfloor}^{(1)} \ge  \tfrac12 x_1 + \tfrac12 \lfloor b_n^{(1)}\rfloor, S_{\lfloor n/2 \rfloor}^{(2)} \ge  \tfrac12 x_2 + \tfrac12 \lfloor b_n^{(2)}\rfloor , M_{\lfloor n/2 \rfloor}^{(1)} \le y_1 , M_{\lfloor n/2 \rfloor}^{(2)} \le y_2 \big)\, .
\end{align*}
For the last inequality, we used the local limit theorem~\eqref{LLT} to bound the last probability by $C/(a_n^{(1)} a_n^{(2)})$ uniformly in $x_1,x_2, z_1, z_2$, and then summed over $z_1,z_2$.
Then, we use Claim~\ref{claim:dn} to get that, provided $x_i \geq C_6 a_n^{(i)}$ with $C_{6}$ large enough, the last probability is bounded by
\begin{align*}
\bP\big( \hat S_{\lfloor n/2 \rfloor}^{(1)} \ge  \tfrac14 x_1, &\hat S_{\lfloor n/2 \rfloor}^{(2)} \ge  \tfrac14 x_2   , M_{\lfloor n/2 \rfloor}^{(1)} \le y_1 , M_{\lfloor n/2 \rfloor}^{(2)} \le y_2 \big)\\
&\le \bP\big( \hat S_{\lfloor n/2 \rfloor}^{(1)} \ge  \tfrac14 x_1, M_{\lfloor n/2 \rfloor}^{(1)} \le y_1 \big)^{1/2} \bP\big( \hat S_{\lfloor n/2 \rfloor}^{(2)} \ge  \tfrac14 x_2   , M_{\lfloor n/2 \rfloor}^{(2)} \le y_2 \big)^{1/2}\,  ,
\end{align*}
where we used Cauchy-Schwarz inequality at last.

(2) The second term we need to control is
\begin{align}
 \label{onesmallonelarge} 
\bP&\big( \hat \bS_n = \x  , M_n^{(1)} \le y_1, M_n^{(2)} \le y_2 ,  S_{\lfloor n/2 \rfloor}^{(1)} - \tfrac12 \lfloor b_n^{(1)} \rfloor \ge \tfrac12  x_1, S_{\lfloor n/2 \rfloor}^{(2)} - \tfrac12 \lfloor b_n^{(2)} \rfloor < \tfrac12 x_2 \big)  \\
& \leq  \sumtwo{z_1\ge  \frac12 x_1 + \frac12 \lfloor b_n^{(1)}\rfloor}{z_2 <  \frac12 x_2 + \frac12 \lfloor b_n^{(2)}\rfloor }
\bP\big(  \bS_{\lfloor n/2 \rfloor} = (z_1,z_2) , M_{\lfloor n/2 \rfloor}^{(1)} \le y_1 \big) \notag\\[-0.6cm]
& \hspace{5cm}\times \bP \big(  \bS_n- \bS_{\lfloor n/2 \rfloor} = (x_1-z_1 ,x_2-z_2), \max_{\lfloor n/2\rfloor \leq i\leq n} X_{i}^{(2)} \le y_2  \big) \, .\notag
\end{align}
Then, we can use Proposition~\ref{prop:CD}, say for the second probability: indeed, we have that uniformly for the range of $z_2$ considered,
\begin{align*}
\bP \big(  \bS_n- \bS_{\lfloor n/2 \rfloor} &= (x_1-z_1 ,x_2-z_2), \max_{\lfloor n/2\rfloor \leq i\leq n} X_{i}^{(2)} \le y_2  \big) \\
& = \bP \big(  \hat \bS_{\lfloor n/2 \rfloor} = (x_1-z_1-\lfloor b_{n/2}^{(1)} \rfloor ,x_2-z_2- \lfloor b_{n/2}^{(2)} \rfloor )  , M_{n-\lfloor n/2 \rfloor}^{(2)} \le y_2  \big)\\
&\le  \frac{C}{a_n^{(1)} a_n^{(2)}} \bP\big(  \hat S_{\lfloor n/4 \rfloor}^{(2)}  \ge x_2/16 , M_{\lfloor n/4 \rfloor }^{(2)} \le y_2\big)
\end{align*}
where we used that $x_2-z_2 - \lfloor b_{n/2}^{(2)} \rfloor \ge x_2/4 \ge C_4 a_n^{(2)}$ (thanks to Claim~\ref{claim:dn}).
Using this in \eqref{onesmallonelarge} and summing over $z_1$ and $z_2$ (and using again Claim~\ref{claim:dn}), we finally get that  \eqref{onesmallonelarge} is bounded by
\[\frac{C}{a_n^{(1)} a_n^{(2)}} \bP\big( \hat S_{\lfloor n/2 \rfloor}^{(1)} \ge x_1/4 , M_{\lfloor n/2 \rfloor}^{(1)} \le y_1 \big)   \bP\big(  \hat S_{\lfloor n/4 \rfloor}^{(2)}  \ge x_2/16 , M_{\lfloor n/4 \rfloor }^{(2)} \le y_2\big)\, . \]

\noindent
 Let us stress that, to obtain the statement of  Claim~\ref{claim:localfuknag}, we additionally use  that
\[\bP\big( \hat S_{\lfloor n/2 \rfloor}^{(1)} \ge   x_1/4 , M_{\lfloor n/2 \rfloor}^{(1)} \le y_1 \big) \le 2 \bP\big( \hat S_{\lfloor n/4 \rfloor}^{(1)} \ge   x_1/16, M_{\lfloor n/4 \rfloor}^{(1)} \le y_1 \big) \, .\]
This comes from splitting the left-hand side according to whether
$S_{\lfloor n/4 \rfloor}^{(1)} -\frac12 \lfloor b_{\lfloor n/2   \rfloor} \rfloor  \ge   x_1/8$ or not, and using again Claim~\ref{claim:dn} to get that $|\lfloor b_{n/4} \rfloor  -\frac12 \lfloor b_{\lfloor n/2 \rfloor} \rfloor |\ge  x_1/16$.
\end{proof}

\subsection{Proof of Theorem \ref{thm:locallimit2}}
\label{sec:localharder}

Let us write the details only in dimension $d=2$ to avoid lengthy notations, the proof works identically when $d\ge 3$.
Also, we only deal with  $\x\geq 0$. We fix a constant $C_8$ (large).
The case $x_1\leq C_8\,  a_n^{(1)}, x_2\leq C_8\, a_n^{(2)}$ falls in the range of the local limit theorem~\eqref{LLT}, so we need to consider only  two cases: $x_1 > C_8 \, a_n^{(1)}, x_2 \le C_8\,  a_n^{(2)}$ (the case $x_1\le C_8\, a_n^{(1)}, x_2> C_8\, a_n^{(2)}$ is symmetric) and $x_1 > C_8\, a_n^{(1)}, x_2 > C_8\, a_n^{(2)}$.

\subsubsection{Case $x_1\ge C_8 \, a_n^{(1)}$, $x_2 \le C_8\, a_n^{(2)}$}

We will treat three different contributions, by writing, for some $C_9>0$
\begin{align}
\label{threeterms}
\bP\big( \hat \bS_n = \x\big) = &\ \bP \big( \hat \bS_n =\x, M_n^{(1)} \ge x_1/ 8 \big) \\
&+ \bP \big( \hat \bS_n =\x, M_n^{(1)} \in (C_9\, a_n^{(1)}, x_1/ 8 ) \big) + \bP \big( \hat \bS_n =\x, M_n^{(1)} \le C a_n^{(1)} \big)\, . \notag
\end{align}

For the last term, we  use Proposition~\ref{prop:CD}, together with Theorem~\ref{thm:fuknagaev} (more precisely~\eqref{consequencefuknag}), to get that it is bounded by a constant times
\begin{align*}
\frac{C}{ a_n^{(1)} a_n^{(2)}} \Big( n \gp_1(x_1) x_1^{-\gamma_1} \  e^{- c'' x_1/a_n^{(1)}}  +  e^{- c''  (x_1 /a_n^{(1)})^2} \ind_{\{\ga_1=2\}} \Big)
\end{align*}

Then, we can use that $e^{- c' x_1/a_n^{(1)}} \leq c(a_n^{(1)}/x_1) e^{- c'' x_1/a_n^{(1)}}$ provided that $x_1/a_n^{(1)}$ is large enough (and similarly for the last term), to get that
\begin{equation}
\label{smallmax}
\bP \big( \hat \bS_n =\x, M_n^{(1)} \le C_9 a_n^{(1)} \big)
  \leq \frac{C}{x_1 a_n^{(2)}  } \Big( n \gp_1(x_1) x_1^{-\gamma_1}  +  e^{- c'  (x_1 /a_n^{(1)})^2} \ind_{\{\ga_1=2\}} \Big) \, . 
\end{equation}


\smallskip
In order to treat the  first two terms in \eqref{threeterms},  we control the probability, for $k \in \bbZ$,
\begin{align}
\label{maxk}
&\bP\big(  \hat \bS_n = \x , M_n^{(1)} \in [2^k x_1, 2^{k+1} x_1) \big) \\
&
=n \sum_{u=2^k x_1}^{2^{k+1} x_1} \sum_{v\in \bbZ} \bP(\bX_1 =(u,v)) \bP\big( \hat \bS_{n-1}  = (x_1-u, x_2-v) + \bdelta_n , M_{n-1}^{(1)} \le 2^{k+1}x_1\big) \, ,
\notag
\end{align}
where we set $\bdelta_n:=\lfloor \bb_n \rfloor - \lfloor \bb_{n-1} \rfloor$, which is uniformly bounded by a constant.
By Assumption~\ref{hyp:2}, we get that, for any $u\in [2^k x_1, 2^{k+1}x_1)$ and $v\in \bbZ$,
\begin{align}
\label{bounduv}
\bP( \bX_1 =(u,v)) & \le c  \gp_1(u) u^{-(1+\gamma_1)} \times \frac{1}{1+|v|} h_u^{(1)}(|v|) \\
&\le  c' 2^{-k(1+\gamma_1) + \eta |k|} \gp(x_1) x_1^{-(1+\gamma_1)} 
 \times \frac{h_{2^k x_1}(|v|)}{1+|v|} \, .
 \notag
\end{align}
We used Potter's bound \cite[Thm.~1.5.6]{cf:BGT} to get that for $x_1$ sufficiently large, for every $\eta>0$ there is a constant $c_{\eta}>0$ such that $\gp(2^k x_1) \le c_{\eta} 2^{\eta |k|} \gp(x_1)$ for any $k\in\bbZ$, together with item (iii) in~\eqref{cond:h}.

Since this bound is uniform over $u\in [2^k x_1, 2^{k+1}x_1)$, we may sum over $u$  the last probability in \eqref{maxk}: note that
\begin{align}
\notag
\sum_{u=2^k x_1}^{2^{k+1} x_1} \bP\big( & \hat \bS_{n-1}  = (x_1-u, x_2-v) + \bdelta_n , M_n^{(1)} \le 2^{k+1}x_1\big) \\[-0.2cm]
&
\leq
\begin{cases}
\bP\big( \hat S_{n-1}^{(2)}  = x_2 - v +\delta_n^{(2)} \big) &\quad \tif k \ge -3 \, ,\\
 \bP\big( \hat S_{n-1}^{(1)} \ge \frac12 x_1 , \hat S_{n-1}^{(2)}  = x_2 - v +\delta_n^{(2)} ,  M_n^{(1)} \le 2^{k+1}x_1\big) & \quad \tif k \le -4\, .
\end{cases}
\label{boundsumu}
\end{align}

\textbullet\ When $k\ge -3$, we therefore get from~\eqref{maxk} that (taking $\eta<\gamma_1$ in \eqref{bounduv})
\begin{align}
\bP  \big(  \hat \bS_n = \x  , M_n^{(1)} &\in [2^k x_1, 2^{k+1} x_1) \big) \\
&\le c' n 2^{-k} \gp(x_1) x_1^{-(1+\gamma_1)} 
\sum_{v\in \bbZ} \frac{h_{2^k x_1}(|v|)}{1+|v|} \bP\big( \hat S_{n-1}^{(2)}  = x_2 - v +\delta_n^{(2)} \big) \notag \\
& \le c' n 2^{-k} \gp(x_1) x_1^{-(1+\gamma_1)} \times \frac{1}{a_n^{(2)}}  \, .
\notag 
\end{align}
We used the local limit theorem to get that there is a constant $C$ such that for any $z\in \bbZ$, $\bP(\hat S_{n-1}^{(2)} = z) \le C/a_n^{(2)}$, and then that $\sum_{v\in \bbZ} h_{2^k x_1}(|v|)/(1+|v|) \le C$ for some constant $C$ not depending on $k$ or $x_1$, thanks to item (ii) in~\eqref{cond:h}. From this, we obtain that
\begin{align}
\label{klarge}
\bP  \big(  \hat \bS_n = \x ,   M_n^{(1)} \ge x_1/8 \big)  &=\sum_{k\ge -3} \bP  \big(  \hat \bS_n = \x , M_n^{(1)} \in [2^k x_1, 2^{k+1} x_1) \big)  \\
& \le \frac{C'}{a_n^{(2)}} \, n \gp(x_1) x_1^{-(1+\gamma_1)}   \, .
\notag
\end{align}

\textbullet\ When $k\le -4$,  we use Claim~\ref{claim:localfuknag}  in \eqref{boundsumu}, so that plugged in~\eqref{maxk} we obtain that
\begin{align*}
\bP & \big(  \hat \bS_n = \x , M_n^{(1)} \in [2^k x_1, 2^{k+1} x_1) \big)
\\
& \le c n 2^{-k(1+\gamma_1+ \eta)} \gp(x_1) x_1^{-(1+\gamma_1)} \sum_{v\in \bbZ}  \frac{h_{2^k x_1}(v)}{1+|v|} \,  \frac{1}{a_n^{(2)}}\bP\big( \hat S_{\lfloor (n-1)/2\rfloor } \ge \tfrac18 x_1 , M_n^{(1)} \le 2^{k+1} x_1 \big)\\
& \le \frac{ c }{a_n^{(2)}}   n \gp(x_1) x_1^{-(1+\gamma_1)}  2^{-k(2+\gamma_1)} \bP\big( \hat S_{\lfloor (n-1)/2\rfloor } \ge x_1/8 , M_n^{(1)} \le 2^{k+1} x_1 \big) \, ,
\end{align*}
where we used again item (ii) in \eqref{cond:h} to bound $\sum_{v\in \bbZ} h_{2^k x_1}(|v|)/(1+|v|)$ by a (uniform) constant, and took $\eta=1$.
Then, we can use Theorem~\ref{thm:fuknagaev} to get that there are constants $c,c'$ such that
uniformly for $k \le -4$ with $2^k x_1 \ge C_9 a_n^{(1)}$,
\begin{align}
\label{Fuksimple}
\bP&\big( \hat S_{\lfloor (n-1)/2\rfloor } \ge  x_1/8 , M_n^{(1)} \le 2^{k+1} x_1 \big) \\
&\le \Big( \frac{c}{x_1}  n \gp_1(2^{k} x_1) (2^{k} x_1)^{1-\gamma_1}  \Big)^{c' 2^{-k}} + e^{-c' x_1/a_n^{(1)}}\ind_{\{\ga_1=1\}} + e^{- c' x_1^2/ n \sigma_1(2^{k} x_1)} \ind_{\{\ga_1=2\}}
\notag \\
&\le \big( c'' 2^{-k} \big)^{- c' 2^{-k}}  + e^{-c' x_1/a_n^{(1)} } \, .
\notag
\end{align}
Indeed, we used that since $2^{k} x_1 \ge C_9 a_n^{(1)}$, we have that $n \gp_1(2^{k} x_1) (2^{k} x_1)^{-\gamma_1}$ is bounded by a constant. Also, in the case $\ga_1=2$, we used that  $\sigma_1(2^{k} x_1) \le \sigma_1(x_1)$, and that by definition of $a_n^{(1)}$ we have $x_1^2/n\sigma(x_1) \ge c (x_1/a_n^{(1)})^2 \sigma_1(a_n^{(1)})/\sigma_1(x) \ge c x_1/a_n^{(1)}$ (the last inequality comes from Potter's bound).

Therefore, summing over $k$ between $-4$ and $- \lfloor \log_2 ( x_1/C_9a_n^{(1)}) \rfloor$, we finally obtain that 
$\bP \big( \hat \bS_n =\x, M_n^{(1)} \in (C a_n^{(1)}, x_1/ 8 ) \big) $ is bounded by a constant times $ \frac{ n}{a_n^{(2)}} \gp(x_1) x_1^{-(1+\gamma_1)}$, times
\begin{align}
\label{sumoverk}
\sum_{k'=4}^{\lfloor \log_2(  x_1/ C_9 a_n^{(1)}) \rfloor } 2^{k' (2+\gamma_1)} \Big( \big( c' 2^{k'} \big)^{- c 2^{k'}}  + e^{-c x_1/a_n^{(1)}} \Big)
\le C+ \Big( \frac{ c x_1}{a_n^{(1)}}\Big)^{3+\gamma_1} e^{-c x_1/a_n^{(1)}} \, .
\end{align}
Note that the second term is bounded by a constant, uniformly for $x_1/a_n^{(1)} \ge C$.
Therefore, we conclude that
\begin{equation}
\label{kmedium}
\bP \big( \hat \bS_n =\x, M_n^{(1)} \in (C a_n^{(1)}, x_1/ 8 ) \big)
\le \frac{ C }{a_n^{(2)}}  n \gp(x_1) x_1^{-(1+\gamma_1)}\, .
\end{equation}

As a conclusion, \eqref{threeterms}, combined with \eqref{smallmax}, \eqref{klarge} and \eqref{kmedium}, gives that
\begin{align}
\notag
\bP \big( \hat \bS_n =\x \big) & \le  \frac{C}{ a_n^{(2)}} \Big( n \gp_1(x_1) x_1^{-(1+\gamma_1)}  +  \frac{1}{a_n^{(1)}}  e^{- c (x_1 /a_n^{(1)})^2} \ind_{\{\ga_1=2\}} \Big) \\
&\le \frac{C}{x_1 \, a_n^{(2)}}  \Big( n \gp_1(x_1) x_1^{-\gamma_1}  +   e^{- c' (x_1 /a_n^{(1)})^2} \ind_{\{\ga_1=2\}} \Big) \, .
\end{align}
Notice that, in the case $\ga_1<2$, we have $\gamma_1=\ga_1$, and $n   \sim (a_n^{(1)})^{\ga_1} \gp_1(a_n^{(1)})^{-1}$, so that the second term is negligible, since the first term is bounded below by a power of $x_1/a_n^{(1)}$.
We also used that $(x_1/a_n^{(1)}) \exp(- c x_1/a_n^{(1)})$ is bounded by a constant times $\exp(- c' x_1/a_n^{(1)})$ with $c'>c$, provided that $x_1\geq C_9 a_n^{(1)}$.

\subsubsection{Case $x_1\ge C_8 a_n^{(1)}$, $x_2 \ge  C_8 a_n^{(2)}$}

Again, we decompose the probability according to the value of $M_n^{(1)},M_n^{(2)}$. As a first step, we write 
\begin{align}
\notag
\bP\big( \hat \bS_n =\x \big)
= \bP\big( \hat \bS_n =\x ,  M_n^{(i)} &\le C_9\, a_n^{(i)} \  i=1,2\big) +  \bP\big( \hat \bS_n =\x , M_n^{(i)} > C_9\,  a_n^{(i)} \  i=1,2\big)\\
& + \bP\big( \hat \bS_n =\x , M_n^{(1)} > C_9\,  a_n^{(1)}, M_n^{(2)} \le  C_9\,  a_n^{(2)} \big) \notag\\
 &\qquad  +\bP\big( \hat \bS_n =\x , M_n^{(1)} \le  C_9\,  a_n^{(1)}, M_n^{(2)} >  C_9\,  a_n^{(2)} \big) \, .
\label{fourterms}
\end{align}

\smallskip
{\bf Term 1.} Let us bound the first term in \eqref{fourterms}.
 We use Claim~\ref{claim:localfuknag} (more precisely \eqref{claim2}), together with Theorem~\ref{thm:fuknagaev} (more precisely~\eqref{consequencefuknag}) to get that
\begin{align}
\label{term1}
\bP\big(  &\hat \bS_n =\x , M_n^{(i)} \le C_{9}\,  a_n^{(i)} \  i=1,2\big) \\
&\le \frac{C}{a_n^{(1)} a_n^{(2)}} \Big(   (n \gp_1(x_1) x_1^{-\gamma_1} )^2 e^{ - c'' x_1/a_n^{(1)}}  +  e^{- c'' (x_1/a_n^{(1)})^2 } \ind_{\{\ga_1=2\}} \Big)^{1/2} \Big( e^{- c x_2/a_n^{(2)}}  \Big)^{1/2}  \notag\\
& \leq \frac{C'}{x_1 x_2} \Big( n \gp_1(x_1) x_1^{-\gamma_1} +  e^{- c' (x_1/a_n^{(1)})^2 } \ind_{\{\ga_1=2\}} \Big)\, . \notag
\end{align}
Note that we also used that \eqref{consequencefuknag} is also bounded by $\exp(-c x_2/a_n^{(2)})$ for the first inequality.
Then, we used that $(a+b)^{1/2}\leq a^{1/2}+ b^{1/2}$ for any $a, b \geq 0$, and then that $\frac{1}{a_n^{(i)}} e^{-c x_i/a_n^{(i)}} \le \frac{1}{x_i} e^{-c' x_i/a_n^{(i)}}$ for $x_i/a_n^{(i)}$  large.

\smallskip
{\bf Term 3.} 
 We now bound the third term  in~\eqref{fourterms} by a constant times $ (x_1 x_2)^{-1} n \gp_1(x_1) x_1^{-\gamma_1}$.
We proceed as for the previous section \eqref{maxk}--\eqref{kmedium}. The analogous of \eqref{maxk} is, for $k\in \mathbb{Z}$
\begin{align*}
&\bP\big( \hat \bS_n =\x , M_n^{(1)} \in [2^k x_1 ,2^{k+1} x_1), M_n^{(2)} \le  C_9 \, a_n^{(2)} \big) \\
& \leq  n \sum_{u=2^k x_1}^{2^{k+1} x_1} \sum_{v\le C_9 a_n^{(2)}} \bP(\bX_1=(u,v))\\[-0.5cm]
& \hspace{3.8cm} \bP\big( \hat \bS_{n-1} =(x_1-u,x_2-v)+\bdelta_n , M_{n-1}^{(1)} \le 2^{k+1} x_1 , M_{n-1}^{(2)} \le C_9 a_n^{(2)} \big) \, .
\end{align*} 
Then, one bounds $\bP(\bX_1=(u,v))$ by using Assumption~\ref{hyp:2} (as in \eqref{bounduv}), and by summing over $u \in [2^k x_1, 2^{k+1} x_1 )$ one needs to control (analogously to \eqref{boundsumu})
\begin{equation}
\label{remainstobound}
\begin{split}
\tif k \ge -3, \quad &\bP\big( \hat S_{n-1}^{(2)} = x_2 - v + \gd_n^{(2)} , M_{n-1}^{(2)} \le C_9 a_n^{(2)} \big)  \, ,\\
\tif k \le -4 , \quad &\bP\big( \hat S_{n-1}^{(1)} \ge  x_1/2 , \hat S_{n-1}^{(2)} = x_2 - v + \gd_n^{(2)} , M_{n-1}^{(1)} \le 2^{k+1} x_1, M_{n-1}^{(2)} \le C_9 a_n^{(2)} \big) \, ,
\end{split}
\end{equation}
uniformly over $v \le C_9 a_n^{(2)}$.

The first probability in~\eqref{remainstobound} is treated by using Proposition~\ref{prop:CD}, together with~\eqref{consequencefuknag} (and the remark below): 
since $x_2-v +\gd_n^{(2)}$ is bounded below by $x_2/2$ uniformly in the range of $v$ considered (and assuming that $C_9<C_8/2$), we get that 
\begin{equation}
\label{term2-k>-3}
\bP\big( \hat S_{n-1}^{(2)} = x_2 - v + \gd_n^{(2)} , M_{n-1}^{(2)} \le C_9 a_n^{(2)} \big) \le \frac{C}{a_n^{(2)}} e^{-c x_2/a_n^{(2)}}  \leq \frac{C}{x_2}\, .
\end{equation}
We used the fact that $x_2 \geq C_8 a_n^{(2)}$ for the last inequality.
Hence, we get that for $k\ge -3$
\begin{align*}
\bP\big( \hat \bS_n = \x , M_n^{(1)} \in [2^k x_1& ,2^{k+1} x_1), M_n^{(2)} \le  C_9 a_n^{(2)} \big)\\
&\le \frac{C}{x_2}\ 2^{-k(1+\gamma_1)+ \eta |k|} n \gp(x_1) x_1^{-(1+\gamma_1)} \times \sum_{v\in \bbZ^d} \frac{h_{2^k x_1}(|v|)}{1+|v|} \, ,
\end{align*}
and the last sum is bounded by a constant uniform in $k,x_1$, thanks to item (ii) in ~\eqref{cond:h}.
Summing over $k \ge -3$, we get that, analogously to~\eqref{klarge},
\begin{equation}
\label{endterm3--1}
\bP\big( \hat \bS_n =  \x , M_n^{(1)} \ge x_1/8, M_n^{(2)} \le  C_9 \,  a_n^{(2)} \big) \le  \frac{C}{x_2}  n \gp(x_1) x_1^{-(1+\gamma_1)}  \, .
\end{equation}

For the second probability in~\eqref{remainstobound} (with $k\le -4$), we invoke Claim~\ref{claim:localfuknag}: one can easily adapt the proof of~\eqref{claim2}, using that $x_2 - v + \gd_n^{(2)} \ge x_2/2$ uniformly for the range of $v$ considered, to get that
\begin{align}
\bP\big( \hat S_{n-1}^{(1)} &\ge   x_1/2, \hat S_{n-1}^{(2)} = x_2 - v + \gd_n^{(2)} , M_{n-1}^{(1)} \le 2^{k+1} x_1, M_{n-1}^{(2)} \le C_9 \, a_n^{(2)} \big)   \notag \\
&\le \frac{C}{a_n^{(2)}} \bP \big( \hat S_{\lfloor n/4 \rfloor}^{(1)} \ge x_1/32 ,  M_{\lfloor n/4 \rfloor}^{(1)} \le 2^{k+1} x_1\big)^{1/2} 
 \bP\big( \hat S_{\lfloor n/4 \rfloor}^{(2)} \ge x_2/32 , M_{\lfloor n/4 \rfloor}^{(2)} \le C_9 a_n^{(2)} \big)^{1/2} \notag \\
 & \le \frac{C}{a_n^{(2)}} \Big(  \big( c 2^{-k}\big)^{c 2^{k}} + e^{- c x_1/a_n^{(1)} }\Big)  e^{- c x_2/a_n^{(2)}}
 \leq \frac{C}{x_2} \Big(  \big( c 2^{-k}\big)^{c' 2^{k}} + e^{- c' x_1/a_n^{(1)} }\Big) \, .
 \label{term2-k<-4}
\end{align}
For the second inequality, we used Theorem~\ref{thm:fuknagaev}, more precisely~\eqref{Fuksimple}.
Therefore, we obtain that for $k\le -4$ with $2^k x_1\ge C_9 a_n^{(1)}$,
\begin{align*}
\bP\big( \hat \bS_n &=  \x , M_n^{(1)} \in [2^k x_1 ,2^{k+1} x_1), M_n^{(2)} \le  C_9 a_n^{(2)} \big)\\
&\le \frac{C}{x_2} 2^{-k(1+\gamma_1)+ \eta |k|} n \gp(x_1) x_1^{-(1+\gamma_1)} \Big(  \big( c 2^{-k}\big)^{c' 2^{k}} + e^{- c' x_1/a_n^{(1)} }\Big) \times \sum_{v\in \bbZ^d} \frac{h_{2^k x_1}(v)}{1+|v|} \, ,
\end{align*}
with the last sum bounded by a constant uniform in $k,x_1$.
Summing over $k$ between $-4$ and $-\lfloor \log_2( x_1/C_9 a_n^{(1)} ) \rfloor$ (as done in~\eqref{sumoverk}), we get that
\begin{equation}
\label{endterm3-2}
\bP\big( \hat \bS_n =  \x , M_n^{(1)} \in (C_9\, a_n^{(1)}, x_1/8), M_n^{(2)} \le  C_9\, a_n^{(2)} \big) 
\le \frac{C}{x_2} n \gp_1(x_1) x_1^{-(1+\gamma_1)} \, .
\end{equation}

To conclude, we have that
\begin{align}
\label{term3}
\bP\big( \hat \bS_n =  \x , M_n^{(1)} \ge  C_9\, a_n^{(1)}, M_n^{(2)} \le  C_9\, a_n^{(2)} \big) &\le \frac{C}{x_2} n \gp_1(x_1) x_1^{-(1+\gamma_1)} \, .
\end{align}

\smallskip
{\bf Term 4.} 
 We now bound the fourth term in \eqref{fourterms}. We stress that the treatment is not completely symmetric to that of Term 3, since we wish to obtain a bound that depends on the tail of the first coordinate (\textit{i.e.}\ on $\gp_1(\cdot)$ and $\gamma_1$), whereas \eqref{term3} above yields the bound $\frac{C}{x_1} n \gp_2(x_2) x_1^{-(1+\gamma_2)}$.
We however proceed analogously: we control
\[ \bP\big( \hat \bS_n =  \x , M_n^{(1)} \le C_9\, a_n^{(1)}, M_n^{(2)} \in [2^k x_1, 2^{k+1}x_1] \big) \, . \]
Then, for $k\ge -3$, instead of~\eqref{term2-k>-3}, we use Proposition~\ref{prop:CD} together with~\eqref{consequencefuknag} to get that
\begin{equation}
\bP(   \hat S^{(1)}_{n-1} =  x_1-v +\gd_n^{(1)} , M_{n-1}^{(1)} \le C_9\, a_n^{(1)} ) \leq \frac{C}{x_1} \Big( n \gp(x_1) x_1^{-\gamma_1}  + e^{- c (x_1/a_n^{(1)})^2} \Big)\, .
\end{equation}
We end up with, analogously to \eqref{endterm3--1},
\begin{align*}
\bP( \hat \bS_n =  \x , M_n^{(1)} \le C_9\,  a_n^{(1)}, M_n^{(2)} &\geq x_2/8 ) \\
&\leq \frac{C}{x_1} \Big( n \gp(x_1) x_1^{-\gamma_1}  + e^{- c (x_1/a_n^{(1)})^2} \Big) \,  n \gp(x_2) x_2^{-(1+\gamma_1)} \, .
\end{align*}
Also, for $k\leq -4$, instead of \eqref{term2-k<-4}, we get 
\begin{align*}
\bP\big(\hat S_{n-1}^{(1)} = x_1 - v + \gd_n^{(1)} &, \hat S_{n-1}^{(2)} \ge  x_2/2,   M_{n-1}^{(1)} \le C_9\, a_n^{(2)} , M_{n-1}^{(2)} \le 2^{k+1} x_1,\big)   \notag \\
&\le  \frac{C}{a_n^{(1)}} \Big(   n \gp(x_1) x_1^{-\gamma_1}  + e^{- c (x_1/a_n^{(1)})^2} \Big) \Big(  \big( c 2^{-k}\big)^{c' 2^{-k}} + e^{- c' x_2/a_n^{(2)} }\Big)\, , 
\end{align*}
and, analogously to \eqref{endterm3-2}, we obtain
\begin{align*}
\bP \big( \hat \bS_n =  \x  , M_n^{(1)} \le C_9\, a_n^{(1)}, M_n^{(2)} &\in (C_9\, a_n^{(2)} , x_2/8) \big) \\
&\leq \frac{C}{x_1} \Big( n \gp(x_1) x_1^{-\gamma_1}  + e^{- c (x_1/a_n^{(1)})^2} \Big) \,  n \gp(x_2) x_2^{-(1+\gamma_1)}\, .
\end{align*}
All together, and since $n \gp(x_2) x_2^{-\gamma_1}$ is bounded by  a constant (since $x_2\geq C_8\, a_n^{(2)}$), we obtain
\begin{align}
\label{term4}
\bP\big( \hat \bS_n =  \x , M_n^{(1)} \le C_9\, a_n^{(1)}, M_n^{(2)} \ge  C_9\, a_n^{(2)} \big) 
& \le \frac{C}{x_1 x_2} \, \Big( n \gp(x_1) x_1^{-\gamma_1}  + e^{- c (x_1/a_n^{(1)})^2} \Big) \, .
\end{align}

\smallskip
{\bf Term 2.} It remains to deal with the second term in~\eqref{fourterms}, which is the most technical.
We will estimate the probabilities, for $k,j\in \mathbb{Z}$
\begin{align}
\label{lastprobability}
\bP\big( \hat \bS_n =  \x , M_n^{(1)} \in [2^k x_1,2^{k+1}x_1) , M_n^{(2)}  \in [2^j x_2, 2^{j+1} x_2) \big) =: P_1(k,j) + P_2(k,j)\, . 
\end{align}
Here, we split the probability into two contributions: either the two maxima in $M_n^{(1)}, M_n^{(2)}$ are attained in one increment (with both coordinates large), see~\eqref{probasamejump}, or the two maxima are attained by separate increments, see~\eqref{probatwojump}.

{\it Part 1.} The first contribution is, using a union bound and the exchangeability of the $\bX_i$'s
\begin{align}
\label{probasamejump}
&P_1(k,j):=\bP\Big( \hat \bS_n =  \x, \exists i\in\llbracket 1, n \rrbracket \text{ s.t. } \bX_i \in [2^k x_1,2^{k+1}x_1) \times [2^j x_2, 2^{j+1} x_2) ,\\
& \hspace{8cm} M_n^{(1)} \le 2^{k+1}x_1 , M_n^{(2)} \le 2^{j+1} x_2 \Big)  \notag\\[-0.5cm]
&\le n \sum_{u= 2^k x_1}^{2^{k+1} x_1} \sum_{v=2^{j} x_2}^{2^{j+1} x_2} \bP(\bX_1=(u,v)) \notag\\
&\hspace{1.9cm} \times\bP\big( \hat \bS_{n-1} = (x_1-u,x_2-v) +\bdelta_n , 
 M_{n-1}^{(1)} \le 2^{k+1}x_1 , M_{n-1}^{(2)} \le 2^{j+1} x_2  \big)\, .
 \notag
\end{align}
Then we use Assumption~\ref{hyp:2} (item (i) in \eqref{cond:h}) to get that there is a constant $C$ such that for any $j,k$, and any $(u,v)\in [2^k x_1,2^{k+1}x_1) \times [2^j x_2, 2^{j+1} x_2)$, we have 
\begin{align}
\label{probauv}
\bP(\bX_1=(u,v)) &\le C \gp_1(2^k x_1) (2^k x_1)^{-(1+\gamma_1)} (2^j x_2)^{-1}\\
& \le c 2^{-k(1+\gamma_1)+\eta |k|} 2^{-j} \frac{\gp_1(x_1) x_1^{-(1+\gamma_1)}}{x_2} \, . \notag
\end{align}
Therefore, in~\eqref{probasamejump}, we can sum over $u,v$ the last probability, and we treat it differently according to whether $k\ge -3$ or not and $j\ge -3$ or not (similarly to \eqref{boundsumu}): after summation over $u,v$, we obtain the following upper bound
\begin{align*}
\tif k\le -4, j\le -4, \quad &  \bP\big( \hat S_{n-1}^{(1)} \ge x_1/2 , \hat S_{n-1}^{(2)} \ge x_2/2 , M_{n-1}^{(1)} \le 2^{k+1} x_1 , M_{n-1} \le 2^{j+1} x_2 \big) \, , \\
\tif k\le -4, j\ge -3, \quad &  \bP\big( \hat S_{n-1}^{(1)} \ge x_1/2 , M_{n-1}^{(1)} \le 2^{k+1} x_1 \big) \, , \\
\tif k\ge -3, j\le -4, \quad &   \bP\big( \hat S_{n-1}^{(2)} \ge x_2/2 , M_{n-1} \le 2^{j+1} x_2 \big) \, ,\\
\tif k\ge -3, j\ge -3, \quad & 1.  
\end{align*}
Then, we can use Theorem~\ref{thm:fuknagaev} to get that for $k \le -4$ with $2^k x_1\ge C_9\, a_n^{(1)}$ we have, with the same argument as for~\eqref{Fuksimple},
\begin{equation}
\label{Fuksimple2}
 \bP\big( \hat S_{n-1}^{(1)} \ge x_1/2 , M_{n-1}^{(1)} \le 2^{k+1} x_1 \big)  \le \big( c 2^{-k}\big)^{- c' 2^{-k}} + e^{- c' x_1/a_n^{(1)}} \, ,
\end{equation}
and similarly for the second coordinate.
 In the case $k\le -4$, $j\le -4$, Cauchy-Schwarz inequality allows us to to reduce to this estimate.

Going back to~\eqref{probasamejump}, and using~\eqref{probauv}, in the case $k, j\geq -3$ we get that
\begin{align}
\sum_{k,j=-3}^{+\infty} P_1(k,j) \le  \sum_{k,j=-3}^{+\infty}  \frac{C}{x_2} n \gp_1(x_1) x_1^{-(1+\gamma_1)}  2^{-k} 2^{-j} \leq \frac{C'}{x_2}  n  \gp_1(x_1) x_1^{-(1+\gamma_1)}\, .
\end{align}
In the case $k \le -4$, $j\ge -3$ (the case $k\ge -3$, $j\le -4$ is symmetric), we get that
\begin{equation*}
P_1(k,j) \le \frac{C}{x_2} n \gp_1(x_1) x_1^{-(1+\gamma_1)} 2^{-k(2+\gamma_1)} 2^{-j} \big( ( c 2^{-k} )^{- c' 2^{-k}} + e^{- c' x_1/a_n^{(1)}} \big)\, .
\end{equation*}
Hence, we obtain (the calculation is analogous to that in~\eqref{sumoverk})
\begin{align}
\label{sumPkj}
\sum_{k=- \lfloor \log_2( x_1/C_9 a_n^{(1)}) \rfloor }^{-4} \sum_{j=-3}^{+\infty} P_1(k,j) &\le 
\frac{C}{x_2} n  \gp_1(x_1) x_1^{-(1+\gamma_1)} \Big( C +  \Big(\frac{c x_1}{a_n^{(1)}} \Big)^{3+\gamma_1}e^{- c x_1/a_n^{(1)}}\Big) \, .
\end{align}
In  the case $k \le -4$, $j\le -4$, we get that
\begin{align*}
P_1(k,j) \le \frac{C}{x_2} n & \gp_1(x_1) x_1^{-(1+\gamma_1)}\\
&\times   2^{-k(2+\gamma_1)} 2^{-j}  \big(  ( c 2^{-k} )^{- c 2^{-k}} + e^{- c x_1/a_n^{(1)}} \big) \big(  ( c 2^{-j} )^{- c' 2^{-j}} + e^{- c' x_1/a_n^{(2)}} \big)\, ,
\end{align*}
and a similar calculation as above gives
\begin{align}
\label{sumPkj2}
\sum_{k=- \lfloor \log_2( x_1/C_9 a_n^{(1)}) \rfloor}^{-4} \sum_{j=- \lfloor \log_2( x_2/C_9 a_n^{(1)}) \rfloor}^{-4} P_1(k,j) &\le 
\frac{C}{x_2}  n\gp_1(x_1) x_1^{-(1+\gamma_1)}  \, .
\end{align}

All together, we obtain that 
\begin{equation}
\label{sumP1kj}
\sum_{k \geq - \log_2( x_1/C_9 a_n^{(1)}) } \sum_{j \geq -  \log_2( x_2/C_9 a_n^{(1)}) }   P_1(k,j)\le 
\frac{C}{x_1 x_2}  n\gp_1(x_1) x_1^{-\gamma_1} \, .
\end{equation}

{\it Part 2.} It remains to control the contribution when the maxima in $M_n^{(1)}, M_n^{(2)}$ are attained by separated increments, \textit{i.e.}
\begin{align}
\label{probatwojump}
&P_2(k,j):=\bP\Big( \hat \bS_n =  \x, \exists i\neq\ell  \in\llbracket 1, n \rrbracket, \text{ s. t. } X_i^{(1)} \in [2^k x_1,2^{k+1}x_1), M_n^{(1)} \le 2^{k+1}x_1, \\
& \hspace{7.05cm}  X_\ell^{(2)} \in [ 2^{j} x_2, 2^{j+1} x_2),  , M_n^{(2)} \le 2^{j+1} x_2 \Big)  \notag\\
&\le \binom{n}{2} \sum_{u= 2^k x_1}^{2^{k+1} x_1} \sum_{v \le 2^{j+1} x_2} \sum_{s\le 2^{k+1} x_1} \sum_{t =2^j x_2}^{2^{j+1} x_2}  \bP(\bX_1=(u,v)) \bP(\bX_1= (s,t)) \notag\\
&\quad \ \  \times \bP\big( \hat \bS_{n-2} = \x - (u,v)-(s,t) +\bdelta_n + \bdelta_{n-1} , 
 M_{n-2}^{(1)} \le 2^{k+1}x_1 , M_{n-2}^{(2)} \le 2^{j+1} x_2  \big)\, .
 \notag
\end{align}

Again, we use Assumption~\ref{hyp:2} to bound the first two probabilities: for the ranges of $u,v$ and $s,t$ considered, using item (iii) in \eqref{cond:h}, we have
\begin{equation}
\label{probauvst}
\begin{split}
\bP(\bX_1=(u,v)) & \le    c 2^{-k(1+\gamma_1)+\eta |k|}  \gp_1(x_1) x_1^{-(1+\gamma_1)}    \times \frac{h_{2^k x_1}^{(1)} ( |v|)}{ 1+|v|} \, , \\
\bP(\bX_1=(s,t)) & \le c 2^{-j(1+\gamma_1)+\eta |j|}  \gp_2(x_2) x_2^{-(1+\gamma_2)}    \times \frac{h_{2^k x_2}^{(2)} ( |s|)}{ 1+|s|}\, .
\end{split}
\end{equation}

Then, we may sum the last probability in \eqref{probatwojump} over $u$ and $t$ in the range considered, and get after summation (using also that for the range of $v$ and $s$ considered we  have $v\le 2^{j+1} x_2$, $s\le 2^{k+1} x_1$)
\begin{align*}
\tif k\le -4, j\le -4, \quad &  \bP\big( \hat S_{n-2}^{(1)} \ge x_1/2 , \hat S_{n-2}^{(2)} \ge x_2/2 , M_{n-2}^{(1)} \le 2^{k+1} x_1 , M_{n-2} \le 2^{j+1} x_2 \big) \, , \\
\tif k\le -4, j\ge -3, \quad &  \bP\big( \hat S_{n-2}^{(1)} \ge x_1/2 , M_{n-2}^{(1)} \le 2^{k+1} x_1 \big) \, , \\
\tif k\ge -3, j\le -4, \quad &   \bP\big( \hat S_{n-2}^{(2)} \ge x_2/2 , M_{n-2} \le 2^{j+1} x_2 \big) \, ,\\
\tif k\ge -3, j\ge -3, \quad & 1 ,
\end{align*}
and to treat these terms, we can again use Theorem~\ref{thm:fuknagaev}, in the same way as for~\eqref{Fuksimple2}.
Then we can sum over $v$ and $s$ and use item (ii) in \eqref{cond:h} to get that $\sum_{v} h_{2^k x_1}^{(i)}(|v|) /(1+|v|)$

Going back to~\eqref{probatwojump}, and starting with the case $k,j\geq -3$, we get
\begin{align*}
\sum_{k, j = -3}^{+\infty} P_2(k,j)
&\leq  \sum_{k, j = -3}^{+\infty}  C \binom{n}{2} \gp_1(x_1) x_1^{-(1+\gamma_1)}  \gp_2(x_2) x_2^{-(1+\gamma_2)} 2^{-k} 2^{-j} \\
& \le C'   n \gp_1(x_1) x_1^{-(1+\gamma_1)}  n \gp_2(x_2) x_2^{-(1+\gamma_2)}\, .
\end{align*}

Similarly, and using~\eqref{Fuksimple2}, we get that if $k\le -4, j\ge -3$ (the  case $k\ge -3$, $j\le -4$ is symmetric)
\begin{align*}
P_2(k,j) & \le C'   n \gp_1(x_1) x_1^{-(1+\gamma_1)} n \gp_2(x_2) x_2^{-(1+\gamma_2)} \times 2^{-k(2+\gamma_1)} 2^{- j} \big( ( c 2^{-k})^{- c' 2^{-k}} + e^{- c' x_1/a_n^{(1)}} \big)\, .
\end{align*}
As above (with the same argument as in~\eqref{sumPkj}), we therefore get that
\begin{align*}
\sum_{k=- \lfloor \log_2(x_1/C_9 a_n^{(1)}) \rfloor }^{-4} \sum_{j=-3}^{+\infty} P_2(k,j) \le 
C  n \gp_1(x_1) x_1^{-(1+\gamma_1)} n \gp_2(x_2) x_2^{-(1+\gamma_2)}\, .
\end{align*}
An identical argument holds in the case $k\le -4, j\le -4$, and we end up with
\begin{align}
\label{sumP2kj}
\sum_{k \geq - \log_2( x_1/C_9 a_n^{(1)}) } \sum_{j \geq -  \log_2( x_2/C_9 a_n^{(1)}) } P_2(k,j)&\le 
C n \gp_1(x_1) x_1^{-(1+\gamma_1)} n \gp_2(x_2) x_2^{-(1+\gamma_2)}
\\
&\le \frac{C}{x_1 x_2} n \gp_1(x_1) x_1^{-\gamma_1}  \, . \notag
\end{align}
For the last inequality, we  used that $x_2\geq C_8 a_n^{(2)}$, so that $n\gp_2(x_2) x_2^{-\gamma_2}$ is bounded by a constant, thanks to the definition~\eqref{def:an} of $a_n^{(2)}$.

Therefore, going back to \eqref{lastprobability}, and using \eqref{sumP1kj}-\eqref{sumP2kj}, we obtain that
\begin{equation}
\label{term2}
\bP\big( \hat \bS_n =  \x , M_n^{(1)} \ge C_9\, a_n^{(1)}, M_n^{(2)} \ge  C_9\, a_n^{(2)} \big)  \le  \frac{C}{x_1 x_2}   n \gp_1(x_1) x_1^{-\gamma_1}  \, .
\end{equation}

\smallskip
{\bf Conclusion.}
Let us collect the estimates~\eqref{term1}, \eqref{term3}, \eqref{term4} and \eqref{term2}: plugged into~\eqref{fourterms}, we finally obtain
\begin{align}
\bP\big( \hat S_n = \x \big) \le \frac{C}{x_1 x_2} \Big(n \gp_1(x_1) x_1^{-\gamma_1}  + e^{- c (x_1/a_n^{(1)})^2}  \ind_{\{\ga_1=2\}}     \Big) \, .
\end{align}
This concludes the proof of Theorem~\ref{thm:locallimit2}, since the same bound applies to any coordinate.


\subsection{Proof of Theorem~\ref{thm:locallimit3}}
\label{sec:localWill}

Again, we prove only the case of the dimension $d=2$ for simplicity.
Recall that we work in the balanced case, so we write $a_n\equiv a_n^{(i)}$ and $\ga\equiv\ga_i$.
Let us assume that $|x_{1}|\geq |x_2|$, so that $ c |x_1| \geq \|\x\| \geq |x_1|$ (the other case is symmetric). Suppose also for simplicity that $x_1$ is positive (so we can drop the absolute value), and  $x_1>C_8\,  a_n$.
We write
\begin{align}
 \label{local3-firststep}
\bP\big( \hat \bS_n =\x \big) = 
\bP\big( \hat \bS_n =\x , M_n^{(1)}\geq  x_1/8 \big) + \bP&\big( \hat \bS_n =\x,  M_n^{(1)} \in ( C_9\, a_n,  x_1/8)  \big) \\
& \qquad + \bP\big(\hat \bS_n =\x , M_n^{(1)} \leq C_9\, a_n\big) \, .
\notag
\end{align}

The last term in \eqref{local3-firststep} can be bounded using Proposition~\ref{prop:CD}, together with~\eqref{consequencefuknag}
\begin{align}
\bP\big(\hat \bS_n =\x , M_n^{(1)} \leq C_9\, a_n\big) & \leq \frac{C}{(a_n)^2} \Big( n  \gp(x_1) x_1^{-\gamma_1}  e^{- c x_1/a_n}  + e^{- c (x_1/a_n)^2} \ind_{\{\ga_1=2\}}\Big) \notag \\
& \leq C  n  \gp(x_1) x_1^{-(2+\gamma_1)} +\frac{C}{(a_n)^2} e^{- c (x_1/a_n)^2} \ind_{\{\ga_1=2\}} \, ,
\label{term1Will}
\end{align}
where we used that $e^{- c x_1/a_n} \leq  (a_n/x_1)^2 $ provided that $x_1/a_n \geq C_8$ with $C_8$ large enough.

For the first term in \eqref{local3-firststep},  because of the exchangeability of the $\bX_i$ and thanks to a union bound, we get
\begin{align}
\label{term2Will}
\bP\big( \hat \bS_n& =\x , M_n^{(1)}\geq x_1/8 \big) \leq \sum_{\y \in \bbZ^2, y_1\geq x_1/8} n \bP(\bX_1 = \y) \bP\big( \hat \bS_{n-1} = \x-\y  +\boldsymbol{\delta}_n\big)  \\
& \leq  C \gp(x_1) x_1^{-(2+\gamma)} \sum_{\y \in \bbZ^2, y_1\geq x_1/8}\bP\big( \hat \bS_{n-1} = \x-\y  +\boldsymbol{\delta}_n\big) \leq C \gp(x_1) x_1^{-(2+\gamma)} \,.
\notag
\end{align}
Here, we used \eqref{eq:Will}: $\bP(\bX_1 = \y)$ is bounded by a constant times  $\gp(\|\y\|) \|\y\|^{-(2+\gamma)}$ for the range of $\y$ under summation (and it is bounded by a constant times $\gp(x_1) x_1^{-(2+\gamma)}$).

It remains to control the middle term in \eqref{local3-firststep}. We write
\begin{align}
\bP\big( \hat \bS_n  =\x, \,   M_n^{(1)}& \in ( C_9\, a_n,  x_1/8)  \big) = \sum_{j=3}^{\lfloor \log_2 ( x_1/C_9 a_n) \rfloor} \bP\big( \hat \bS_n =\x, M_n^{(1)} \in [ 2^{-(j+1)} x_1, 2^{-j} x_1)\big) \notag\\
\le   \sum_{j=3}^{\lfloor \log_2 ( x_1/C_9 a_n) \rfloor }&  \sum_{\y \in \bbZ^2, y_1\geq 2^{-(j+1)} x_1} n \bP(\bX_1 = \y)  \bP\big( \hat \bS_{n-1} = \x-\y  +\boldsymbol{\delta}_n , M_n^{(1)} \leq 2^{-j} x_1\big) \notag \\
\le n &\gp(x_1) x_1^{-(2+\gamma)}  \sum_{j=3}^{\lfloor \log_2 ( x_1/C_9 a_n) \rfloor } 2^{(d+1+\gamma)(j+1)} \bP\big( \hat S^{(1)}_{n-1}  \geq x_1/2 , M_n^{(1)} \leq 2^{-j} x_1\big)\, .
\label{local3:term3}
\end{align}
For the last inequality, we used \eqref{eq:Will} to bound $\bP(\bX_1 = \y) \leq c\gp(\|y\|) \|y\|^{- (2+\gamma)}$: this is bounded, for $\y$ with $y_1\geq 2^{-(j+1)} x_1$, by a constant times $\gp(2^{-(j+1)} x_1) x_1^{- (d+\gamma)} 2^{(j+1)(2+\gamma)}$, with $\gp(2^{-(j+1)} x_1) \leq 2^j \gp(x_1)$ thanks to Potter's bound.
Then, for every $j$, the sum over $\y$ with $y_1\geq 2^{-(j+1)} x_1$  gives rise to $\bP\big( \hat S^{(1)}_{n-1}  \geq x_1/2 , M_n^{(1)} \leq 2^{-j} x_1\big)$ (recall that $\gd_n^{(1)}$ is bounded by a constant).

Then, it remains to use  Theorem~\ref{thm:fuknagaev}, more precisely \eqref{Fuksimple}, to get that the last sum in \eqref{local3:term3} is bounded by 
\begin{align*}
\sum_{j=3}^{\lfloor \log_2 ( x_1/C_9 a_n) \rfloor } 2^{(d+1+\gamma)(j+1)} \big(  (c 2^j)^{- c'2^{j}}  +   e^{- c' x_1/a_n} \big) \, ,
\end{align*}
which is bounded by a constant. Therefore we have that
\begin{equation}
\bP\big( \hat \bS_n  =\x, \,   M_n^{(1)} \in ( C_9\, a_n,  x_1/8)  \big) \leq C n \gp(x_1) x_1^{-(2+\gamma)}\, .
\end{equation}
Together with~\eqref{term1Will}-\eqref{term2Will}, this conclude the proof of Theorem~\ref{thm:locallimit3} (recall $x_1 \geq c \|x \|$).


\section{Proof for case I (\emph{centered}): $\mathbf{b}_n \equiv \mathbf{0}$}
\label{sec:casI}

In this section, we prove Theorem~\ref{thm:ga<1} (and Theorem~\ref{thm:marginald=2} in Section~\ref{sec:marginald=2} below).
Recall the definition \eqref{def:ni} of $n_{i}$: it verifies $a_{n_{i}}^{(i)} \sim |x_i|$. 
We work along the favorite direction, that is we assume that $x_i /a_{n_{1}}^{(i)} \to t_i \in \bbR^*$ for any $i\in \{1,\ldots,d\}$, which is equivalent to having $n_i \sim |t_i|^{\ga_i} n_1$ (the reference coordinate is the first one, but this is only for commodity). 
We fix $\gep>0$, and we decompose $G(\x)$ into three subparts:
\begin{equation}
\label{split3-2}
G(\x)  =  \bigg( \sum_{n=1}^{\gep n_{1} -1} +   \sum_{n=\gep n_{1} }^{ \gep^{-1} n_{1} } + \sum_{n = \gep^{-1} n_{1}+1 }^{+\infty}  \bigg) \bP\big( \bS_n =\x \big) \, .
\end{equation}
The middle part gives the main contribution: we treat it first, before we show that the other two parts are negligible. 
In this section and in the rest of the paper, we often omit the integer part: for instance, we do as if $\gep n_1$ and $\gep^{-1}n_1$ were integers.

\subsection{Main contribution}
Because $n_i\sim |t_i|^{\ga_i} n_1$, we have that for $\gep \le  n/n_1 \le \gep^{-1} $ the probability $\bP(\bS_n=\x)$ falls into the range of application of the local limit theorem \eqref{LLT}.
We have
\begin{align*}
 \sum_{n=\gep n_{1} }^{\gep^{-1} n_{1} } \bP\big( \bS_n =\x \big) = \sum_{n=\gep n_{1} }^{\gep^{-1} n_{1} }\frac{1}{a_{n}^{(1)} \cdots a_n^{(d)} }  \, g_{\bga}\Big(  \frac{x_1}{a_n^{(1)}} , \ldots, \frac{x_d}{a_n^{(d)}} \Big)
+\sum_{n=\gep n_{1} }^{\gep^{-1} n_{1} } \frac{o(1) }{a_{n}^{(1)} \cdots a_n^{(d)} }\, .
\end{align*}
The second term is negligible compared to the first one, so we focus on the first term. 
Since $a_n^{(i)}$ is regularly varying with exponent $1/\ga_i$, we get that uniformly for $n/n_1 \in (\gep,\gep^{-1})$,
\[a_n^{(i)} = (1+o(1))  a_{n_{1}}^{(i)} \times \big( n/n_{1} \big)^{1/\ga_i} \, .\]
Using also that $x_i/a_{n_{1}}^{(i)} = t_i +o(1)$ as $n_{1}\to \infty$, and since $g_{\bga}(\cdot)$ is continuous, we get that
\begin{align*}
\sum_{n=\gep n_{1} }^{\gep^{-1} n_{1} } \bP\big( \bS_n =\x \big) &=  \frac{(1+o(1))}{a_{n_{1}}^{(1)} \cdots a_{n_{1}}^{(d)}}  \sum_{n=\gep n_{1} }^{\gep^{-1} n_{1} }  \Big( \frac{n_{1}}{n} \Big)^{\frac{1}{\ga_1}+\cdots +\frac{1}{\ga_d}}   g_{\bga}\Big( t_1 \big (\frac{n_{1}}{n} \big)^{1/\ga_1} , \ldots,   t_d  \big (\frac{n_{1}}{n} \big )^{1/\ga_d}\Big)
\\
& = (1+o(1)) \frac{n_{1}}{a_{n_{1}}^{(1)}  \cdots a_{n_{1}}^{(d)} } \int_{\gep}^{\gep^{-1}} u^{-\sum_{i=1}^d \ga_i^{-1}}g_{\bga} \big( t_1 u^{-1/\ga_1},\ldots, t_d u^{-1/\ga_d} \big) \dd u \, .
\end{align*}
By a change of variable, we therefore get that the first term in \eqref{split3-2} is
\begin{align}
\label{cas2:main}
(1+o(1)) \frac{n_{1}}{a_{n_{1}}^{(1)}  \cdots a_{n_{1}}^{(d)} } \int_{\gep}^{\gep^{-1}} v^{-2+\sum_{i=1}^d \ga_i^{-1}} g_{\bga} \big( t_1 v^{1/\ga_1},\ldots, t_d v^{1/\ga_d} \big) \dd v \, .
\end{align}

\subsection{Third part in \eqref{split3-2}}

Using the local limit theorem~\eqref{LLT}, we have that there is a constant $C$ such that 
\begin{align*}
\sum_{n > \gep^{-1} n_{1}} \bP(\bS_n =\x) \le C \sum_{n > \gep^{-1} n_{1}}  \frac{1}{a_n^{(1)} \cdots a_n^{(d)}} \leq C' \frac{\gep^{-1} n_{1}}{ a_{\gep^{-1}n_{1} }^{(1)} \cdots a_{\gep^{-1}n_{1} }^{(d)}} \, .
\end{align*}
For the last inequality we used that $(a_n^{(1)} \cdots a_n^{(d)})^{-1}$ is regularly varying with exponent $-\sum_{i=1}^d \ga_i^{-1} <-1$. Using again the regular variation of $a_n^{(i)}$, we  get that there is a constant $c$ such that the second term in \eqref{split3-2} is
\begin{equation}
\label{cas2:largen}
\sum_{n > \gep^{-1} n_{1}} \bP(\bS_n =\x) \le c\,  \gep^{\sum_{i=1}^d \ga_i^{-1} -1} \times \frac{ n_{1}}{ a_{n_{1} }^{(1)} \cdots a_{n_{1} }^{(d)}} \, .
\end{equation}

\subsection{First part in \eqref{split3-2}}
Let us first consider the case $\sum_{i=1}^d \ga_i^{-1} <2$.
By Theorem~\ref{thm:locallimit1}, and since $|x_1| \geq a_n^{(1)}$ for $n\le \gep n_1$, we get that
\begin{align*}
\sum_{n=1}^{\gep n_{1}} \bP(\bS_n =\x)  & \leq \sum_{n=1}^{\gep n_{1}} \frac{C}{a_n^{(1)} \cdots a_n^{(d)} } \big(  n \gp_1(|x_1|) |x_1|^{-\gamma_1} +   e^{- c( |x_1|/a_{n}^{(1)})^2} \ind_{\{\ga_1=2\}}\big) \\
&\leq  \frac{C' (\gep n_1)^2}{a_{\gep n_1}^{(1)} \cdots a_{\gep n_1}^{(d)} } \gp_1(|x_1|) |x_1|^{-\gamma_1} +  \frac{C \gep n_1}{a_{ n_1}^{(1)} \cdots a_{ n_1}^{(d)} }  e^{- c' ( |x_1|/a_{\gep n_1}^{(1)})^2} \ind_{\{\ga_1=2\}}\, .
\end{align*}
Here, we used for the first term that $n/(a_n^{(1)} \cdots a_n^{(d)})$ is regularly varying with exponent $1-\sum_{i=1}^d \ga_i^{-1}>-1$. For the second term, we bounded $(a_n^{(1)} \cdots a_n^{(d)})^{-1}$ by $(a_{n_1}^{(1)} \cdots a_{n_1}^{(d)})^{-1} (n_1/n)^{2}$  and also $e^{- c ( |x_1|/a_{n}^{(1)})^2}$ by $(n/n_1)^2 e^{- c' ( |x_1|/a_{n}^{(1)})^2}$ provided that $n_1/n$ is large enough (\textit{i.e.}\ $\gep$ small enough).
Then, we use that $(a_n^{(1)} \cdots a_n^{(d)})^{-1}$  is regularly varying with exponent $-\sum_{i=1}^d \ga_i^{-1}$, that $n_1 \gp_1(|x_1|) |x_1|^{-\gamma_1}$ is bounded above by a constant (thanks to the definition  \eqref{def:an} of $a_n^{(1)}$, together with $|x_1|=a_{n_1}^{(1)}$), and that $|x_1| /a_{\gep n_1}^{(1)} \leq c \gep^{-1/2\ga_1}$ (by Potter's bound): we finally end up with
\begin{equation}
\label{cas2:smalln1}
\sum_{n=1}^{\gep n_{1}} \bP(\bS_n =\x)  \leq \frac{C''  n_1}{a_{n_1}^{(1)} \cdots a_{ n_1}^{(d)} }\times \big(  \gep^{2- \sum_{i=1}^d \ga_i^{-1}}  + \gep e^{- c\gep^{-1/\ga_1}}\big)\, .
\end{equation}

In the case where $\sum_{i=1}^d \ga_i^{-1} \geq 2$, we need to use Assumption~\ref{hyp:2}.  For $n\le \gep n_{1}$ we have that $|x_i| \ge c |t_i| a_{n_{1}}^{(i)} \ge c' t_i \gep^{1/\ga_i} a_{n}^{(i)}$, and we get that for $n_1$ sufficiently large, by Theorem~\ref{thm:locallimit2}
\begin{align*}
\sum_{n=1}^{\gep n_{1}} \bP(\bS_n =\x) & \le \sum_{n=1}^{\gep n_{1}} \frac{c}{|x_1| \cdots |x_d|} \big( n \gp_1(|x_1|) |x_1|^{-\gamma_1}
 + e^{- c (|x_1|/a_n^{(1)})^2}  \ind_{\{\ga_1=2\}} \big) \\
 &\le  \frac{ c'}{ a_{n_{1} }^{(1)} \cdots a_{n_{1} }^{(d)}}  \big( (\gep n_{1})^2 \gp_1(|x_1|) |x_1|^{-\gamma_1} + \gep n_1 e^{-c' \gep^{-1/\ga_1} } \ind_{\{\ga_1=2\}}\big)\, .
\end{align*}
For the second inequality, we used that $|x_i| \ge c a_{n_{1}}^{(i)}$ for all $i$ (since $|t_i|> 0$), and that $|x_1|/a_n^{(1)} \geq c |x_1|/a_{\gep n_1}^{(1)} \geq c' \gep^{1/2\ga_1}$ for all $n\leq \gep n_1$ (thanks to Potter's bound). 
Then, since that $n_{1} \gp_1(|x_1|) |x_1|^{-\gamma_1}$ is bounded by a constant, we get that 
\begin{equation}
\label{cas2:smalln}
\sum_{n=1}^{\gep n_{1}} \bP(\bS_n =\x)  
\le  \frac{ c'  n_{1}}{ a_{n_{1} }^{(1)} \cdots a_{n_{1} }^{(d)}} \big( \gep^2 + \gep e^{-c \gep^{-1/\ga_1}}  \big) \, .
\end{equation}

\subsection*{Conclusion}
Collecting~\eqref{cas2:main} together with \eqref{cas2:largen} and \eqref{cas2:smalln1}-\eqref{cas2:smalln}, then letting $n\to+\infty$ and finally $\gep\downarrow 0$, we obtain~\eqref{casIIfavorite}.

\subsection{Proof in the marginal case $d=2$, $\bga=(2,2)$}
\label{sec:marginald=2}

Again, we work along the favorite direction (see \eqref{def:favdir2}), so that in particular we have $n_1 \sim \lambda n_2$ for some constant $\lambda >0$ (recall the definition~\eqref{def:ni} of $n_i$).
For $\gep>0$ fixed, we split the Green function as
\begin{equation}
\label{marginald=2}
G(\x) = \sum_{n=1}^{\gep^{-1} n_1} \bP(\bS_n = \x) + \sum_{n= \gep^{-1} n_1 +1}^{+\infty} \bP(\bS_n =\x) \, . 
\end{equation}

The main contribution comes from the second sum.  Thanks to the local limit theorem~\eqref{LLT}, we get that for $n$ sufficiently large
\begin{align*}
\bP(\bS_n=\x) = \frac{1}{a_n^{(1)} a_n^{(2)}}  \Big(  g_{\bga} \Big( \frac{ x_1}{a_n^{(1)}} ,\frac{x_2}{ a_n^{(2)}} \Big) +o(1) \Big) \, .
\end{align*}
For $n > \gep^{-1} n_1$ we have that $ |x_1| / a_n^{(1)} \le 2 \gep^{1/2}$ for $|x_1|$ large enough (thanks to the definition of $n_1$ and the fact that $a_n^{(1)}$ is regularly varying with exponent $1/2$), and also we have $n \geq \tfrac12 \lambda \gep^{-1} n_2$ so  that $|x_2|/a_n^{(2)} \leq 2 \lambda^{-1/2} \gep^{1/2}$. All together, and since $g_{\bga}$ is continuous at $0$, for every $\eta>0$ we can choose $\gep$ small enough so that for large enough $|x_1|\asymp |x_2|$
\[\frac{g_{\bga}(0,0)-\eta}{a_n^{(1)} a_n^{(2)}}  \leq \bP(\bS_n =\x) \leq  \frac{g_{\bga}(0,0)+\eta}{a_n^{(1)} a_n^{(2)}} \, .\]
Then, since $v \mapsto \sum_{n\geq v} (a_n^{(1)} a_n^{(2)})^{-1}$ is slowly varying, we get that for $n_1$ large enough
\begin{equation}
\sum_{n= \gep^{-1} n_1 +1}^{+\infty} \bP(\bS_n =\x) \le (g_{\bga}(0,0)+2\eta) \sum_{n \geq n_1} \frac{1}{a_n^{(1)} a_n^{(2)}} \, ,
\end{equation}
and a similar lower bound holds, with $2\eta$ replaced by $-2\eta$.

We now treat the first sum in \eqref{marginald=2}. 
First, for $n\leq \gep n_1$,  Theorem~\ref{thm:locallimit1} gives  that
\[\bP(\bS_n = \x) \leq  \frac{C}{a_n^{(1)} a_n^{(2)}} \times \big( n \gp_1(x_1) x_1^{-\gamma} + e^{- c (x_1/a_n^{(1)})^2} \big) \, . \]
Exactly as what is done above to obtain \eqref{cas2:smalln}, we obtain that
\begin{align*}
\sum_{n=1}^{\gep n_{1}} \bP(\bS_n =\x)  
&\le  \frac{ c'  }{ a_{n_{1} }^{(1)}  a_{n_{1} }^{(2)} } \Big( (\gep n_{1})^2 \gp_1(x_1)^{-\gamma} + \gep n_1 e^{-c \gep^{-1} (x_1/a_{n_1}^{(1)})^2 } \Big) \le \frac{ c_{\gep}  n_{1}}{ a_{n_{1} }^{(1)}  a_{n_{1} }^{(2)}}  \, .
\end{align*}
For $  \gep n_1\le n\le \gep^{-1} n_1 $, the local limit theorem~\eqref{LLT} gives that there is a constant $C>0$ such that 
\begin{equation}
\sum_{n=\gep n_1}^{\gep^{-1} n_1} \bP(\bS_n =\x)   \leq \sum_{n=\gep n_1}^{\gep^{-1} n_1} \frac{C}{a_{n }^{(1)}  a_{n }^{(2)} } \le \frac{C_{\gep} n_1}{a_{n_{1} }^{(1)}  a_{n_{1} }^{(2)} } \, .
\end{equation}
As a conclusion, we get that there is some constant $C'_{\gep}$ such that
\begin{equation}
\sum_{n=1}^{\gep^{-1}n_1} \bP(\bS_n = \x) \leq C'_{\gep} \frac{n_1}{a_{n_1 }^{(1)}  a_{n_1 }^{(2)} } = o\Big(  \sum_{n \geq n_1} \frac{1}{a_n^{(1)} a_n^{(2)}} \Big)\, .
\end{equation}
The last identity comes from \cite[Prop. 1.5.9.a.]{cf:BGT}, since $(a_n^{(1)} a_n^{(2)})^{-1}$ is regularly varying with exponent $-1$, and summable.
This concludes the proof of Theorem~\ref{thm:marginald=2}.

\subsubsection*{About the balanced case}
Let us write $a_n \equiv a_n^{(i)}$, and $L(\cdot), \sigma(\cdot)$ in place of $L_i(\cdot), \sigma_i(\cdot)$.
The walk $\bS_n$ is transient if and only if $\sum_{n=1}^{+\infty} (a_n)^{-2} <+\infty$. We may compare the sum to the integral $\int_{1}^{+\infty} (a_t)^{-2} \dd t$ which by a change of variable $u = a_t$ (by definition of $a_t$ we have $t \sim  (a_t)^2/\sigma(a_t)$), $\dd t \sim 2 u \dd u /\sigma(u) $: we get that  $\int_{1}^{+\infty} (a_t)^{-2} \dd t < +\infty$ if and only if $\int_{1}^{+\infty} \frac{ 2  \dd u }{u \sigma(u)} <+\infty$.
With the same change of variable, and using that $a_{n_1}\sim |x_1|$, we get that
\[\sum_{n\geq n_1}^{+\infty} \frac{1}{(a_n)^2} \sim \int_{n_1}^{+\infty} \frac{\dd t}{ (a_t)^2} \sim \int_{|x_1|}^{+\infty} \frac{ 2 \dd u }{u \sigma(u)}  \, , \]
which gives the announced result.

\section{Proof for case II (\emph{non-zero mean}):  $\mu_i \neq 0$ for some $i$ with $\ga_i>1$}
\label{sec:casII}

In this section, we prove Theorem~\ref{thm:ren1}.
Recall our notations:  we have $\mu_i=0$ ($b_n^{(i)} \equiv 0$) for $i>i_0$, and  $\mu_{i_0}\neq 0$ with $\ga_{i_0}>1$. We set $n_{i_0}:= x_{i_0}/\mu_{i_0}$ ($x_{i_0}$ and $\mu_{i_0}$ need to have the same sign), and we also denote $m_{i_0} := a_{n_{i_0}}^{(i_0)}$, so that the typical number of steps for the $i_0$-th coordinate to visit $x_{i_0}$ is $n_{i_0} + O(m_{i_0})$. For simplicity, we work with $\mu_{i_0}, x_{i_0}>0$.
We consider the case where  $\|\x\| \to +\infty$ along the favorite direction, recall \eqref{def:favdir}. 

We fix $\gep>0$, and decompose $G(\x)$ into three subparts:
\begin{equation}
\label{split3}
G(\x)  =  \bigg( \sum_{n<n_{i_0} - \gep^{-1} m_{i_0}} +   \sum_{n=n_{i_0} - \gep^{-1} m_{i_0} }^{n_{i_0} + \gep^{-1} m_{i_0}} + \sum_{n>n_{i_0}+ \gep^{-1} m_{i_0}}  \bigg) \bP\big( \bS_n =\x \big) \, .
\end{equation}
The main contribution is the second part, that we treat first, before we show that the two other parts are negligible.

\subsection{Main contribution}
\label{sec:casIImain}

Since the summation index ranges from $ n_{i_0} - \gep^{-1} m_{i_0}$ to $n_{i_0} + \gep^{-1} m_{i_0}$ and because we work in the favorite direction, we obtain that $\bP(\bS_n =\x)$ falls into the range of application of the local limit theorem \eqref{LLT}.
We have that
\begin{align*}
\sum_{n=n_{i_0} - \gep^{-1} m_{i_0}}^{n_{i_0} + \gep^{-1} m_{i_0}} \bP\big( \bS_n =\x \big) 
 = \sum_{n=n_{i_0} - \gep^{-1} m_{i_0}}^{n_{i_0} + \gep^{-1} m_{i_0}} \frac{1}{a_{n}^{(1)} \cdots a_n^{(d)} }  \, g_{\bga}&\Big(  \frac{x_1 - b_n^{(1)}}{a_n^{(1)}} , \ldots, \frac{x_d -b_n^{(d)}}{a_n^{(d)}} \Big)
\\
& +\sum_{n=n_{i_0} - \gep^{-1} m_{i_0}}^{n_{i_0} +\gep^{-1} m_{i_0}}  \frac{o(1) }{a_{n}^{(1)} \cdots a_n^{(d)} } \, .
\end{align*}
The second term is negligible compared to $a_{n_{i_0}}^{(i_0)}/(a_{n_{i_0}}^{(1)} \cdots a_{n_{i_0}}^{(d)})$, so we focus on the first term.
Let us consider the different terms $(x_i- b_n^{(i)} )/ a_n^{(i)}$ for the range considered. First of all, notice that $n= (1+o(1)) n_{i_0}$, since $m_{i_0}=o(n_{i_0})$: it gives in particular that $a_{n}^{(i)} = (1+o(1)) a_{n_{i_0}}^{(i)}$.

\smallskip
$\ast$ If $\ga_i>1$, then $b_n^{(i)} = \mu_i  n = b_{n_{i_0}}^{(i)} + \mu_i  (n- n_{i_0})$, and hence
\begin{equation}
\label{lltcas1}
\frac{x_i -b_n^{(i)}}{a_n^{(i)}} = (1+o(1)) \frac{x_i -b_{n_{i_0}}^{(i)} }{a_{n_{i_0}}^{(i)}} + (1+o(1)) \mu_i \frac{n-n_{i_0}}{a_{n_{i_0}}^{(i)}} = t_i + \mu_i \frac{n-n_{i_0}}{a_{n_{i_0}}^{(i)}} +o(1)
\end{equation}
uniformly for $|n- n_{i_0}| \leq \gep^{-1} m_{i_0}$. We used that $(x_i -b_{n_{i_0}}^{(i)})/a_{n_{i_0}}^{(i)} \to t_i \in \bbR$, cf.~\eqref{def:favdir}.

\smallskip
$\ast$ If $b_n^{(i)} \equiv 0$ (in particular if $i>i_0$), then more directly, using~\eqref{def:favdir}
\begin{equation}
\label{lltcas2}
\frac{x_i -b_n^{(i)}}{a_n^{(i)}} = (1+o(1)) \frac{x_i}{a_{n_{i_0}}^{(i)}} = t_i +o(1) \, .
\end{equation}

\smallskip
$\ast$ The last case we need to consider is when $\ga_i=1$. Then $b_n^{(i)} = n \mu_i(a_n^{(i)}) $ and we have
\[ \frac{1}{a_{n_{i_0}}^{(i)}} \big| b_n^{(i)} - b_{n_{i_0}}^{(i)} \big| \le   \frac{ |n-n_{i_0}| }{a_{n_{i_0}}^{(i)}}\,  |\mu_i(a_{n_{i_0}}^{(i)})| + \frac{n}{a_{n_{i_0}}^{(i)}} |\mu_i(a_n^{(i)}) - \mu_i(a_{n_{i_0}}^{(i)})| \, .\]
The first term goes to $0$ as $n_{i_0}\to+\infty$, uniformly for $|n-n_{i_0}| \le \gep^{-1}m_{i_0}$: indeed, $m_{i_0}$ is regularly varying in $n_{i_0}$ with exponent $1/\ga_{i_0} <1$, in contrast with $a_{n_{i_0}}^{(i)}$ which is regularly varying with exponent $1$ (and $|\mu_i(a_{n_{i_0}}^{(i)})|$ is  a slowly varying function). 
For the second term, we have $n\sim n_{i_0}$, and $n_{i_0}/a_{n_{i_0}}^{(i)} \sim  L_i(a_{n_{i_0}}^{(i)})^{-1}$: we can use Claim~\ref{claim:mu} to get that the ratio $|\mu_i(a_n^{(i)}) - \mu_i(a_{n_{i_0}}^{(i)}) | /  L_i(a_{n_{i_0}}^{(i)})$ goes to $0$, since $a_{n}^{(i)}/a_{n_{i_0}}^{(i)}$ goes to $1$.
We therefore obtain that uniformly for $|n-n_{i_0}| \le \gep^{-1}m_{i_0}$, using again \eqref{def:favdir},
\begin{equation}
\label{lltcas3}
\frac{x_i -b_n^{(i)}}{a_n^{(i)}} = \frac{x_i- b_{n_{i_0}}^{(i)} }{ a_{n_{i_0}}^{(i_0)}}  + o(1)  = t_i +o(1)\, .
\end{equation}

Combining all the possibles cases in \eqref{lltcas1}-\eqref{lltcas2}-\eqref{lltcas3}, and recalling Section~\ref{sec:convention} (\textit{i.e.}\ $a_{n_{i_0}}^{(i)} \sim a_{i,i_0} a_{n_{i_0}}^{(i_0)}$), we get that for all $i$, uniformly for $|n-n_{i_0}| \le \gep^{-1}m_{i_0}$,
\begin{equation}
\label{test:favdir1}
\frac{x_i -b_n^{(i)}}{a_n^{(i)}} = t_i +  \kappa_i  \frac{n-n_{i_0}}{m_{i_0}} + o(1) \quad \text{with } \kappa_i = a_{i,i_0} \mu_i \ind_{\{i\geq i_0 \}}
\end{equation}
(recall $m_{i_0}=a_{n_{i_0}}^{(i_0)}$ and that if $\ga_i<\ga_{i_0}$ we have  $a_{i,i_0}=0$).
Because $g_{\bga}$ is continuous, we obtain that
\begin{align}
\sum_{n=n_{i_0} -  \gep^{-1} m_{i_0} }^{n_{i_0} + \gep^{-1} m_{i_0}}  \!\!\!  \bP\big( \bS_n =\x \big) &= \frac{1+o(1)}{a_{n_{i_0}}^{(1)} \cdots a_{n_{i_0}}^{(d)}}  \sum_{n=n_{i_0} - \gep^{-1} m_{i_0}}^{n_{i_0} + \gep^{-1} m_{i_0}} \!\!\!
g_{\bga}\Big( t_1+ \kappa_1 \frac{n-n_{i_0}}{m_{i_0}} , \ldots, t_d+ \kappa_d \frac{n-n_{i_0}}{m_{i_0}} \Big)  \notag \\
& = (1+o(1)) \frac{ a_{n_{i_0}}^{(i_0)}}{a_{n_{i_0}}^{(1)} \cdots a_{n_{i_0}}^{(d)}}  \int_{-\gep^{-1}}^{\gep^{-1}} g_{\bga}\big(  t_1 +\kappa_1 u , \ldots, t_d+ \kappa_d u \big) \dd u\, .
\label{conclusion:main}
\end{align}
The last identity holds thanks to a Riemann sum approximation, as $m_{i_0}=a_{n_{i_0}}^{(i_0)} \to+\infty$.

\subsection{Last part in \eqref{split3}}
\label{sec:casIIlast}

We prove that there are constants $C_{10},C_{11}$ such that 
for every $r \ge C_{10}\, m_{i_0}$
\begin{equation}
\label{lastsplit3}
\sum_{n=n_{i_0} + r}^{+\infty} \bP(\bS_n = \x) \le \frac{C_{11}\, a_{n_{i_0}}^{(i_0)}}{a_{n_{i_0}}^{(1)} \cdots a_{n_{i_0}}^{(d)}}  \Big( \frac{r}{m_{i_0}} n_{i_0} \gp_{i_0}(r) r^{-\gamma_{i_0}}  + e^{- c (r /m_{i_0})^2}\ind_{\{\ga_{i_0}=2\}}\Big) .
\end{equation}
For $r_{\gep}:=\gep^{-1} m_{i_0} =\gep^{-1}a_{n_{i_0}}^{(i_0)}$, we have $n_{i_0} \gp_{i_0}(r_{\gep}) r_{\gep}^{-\gamma_{i_0}} \le c \gep^{\gamma_{i_0}}$ thanks to the definition \eqref{def:an} of $a_{n_{i_0}}^{(i_0)}$: we therefore get that
\begin{equation}
\label{conclusionlargen1}
\sum_{n=n_{i_0}+ \gep^{-1} m_{i_0} }^{+\infty}\bP\big(  \bS_n =\x \big) \le \frac{C'_{11}\, a_{n_{i_0}}^{(i_0)}}{a_{n_{i_0}}^{(1)}\cdots a_{n_{i_0}}^{(d)} } \big( \gep^{\gamma_{i_0}-1} + e^{- c\gep^{-2}}\big)\, .
\end{equation}

Let us now prove~\eqref{lastsplit3}.
For any $n\ge n_{i_0} + r$ with $r\ge C_{10} m_{i_0}$ and $C_{10}$ large enough, we have that $x_{i_0}-b_{n}^{(i_0)} = (n_{i_0}-n)\mu_{i_0} \le  - a_n^{(i_0)}$: we can use Theorem~\ref{thm:locallimit1} to get that 
\begin{align*}
\bP\big(  \bS_n =\x \big) \le \frac{C}{a_{n}^{(1)} \cdots a_n^{(d)}} \Big( n \gp_{i_0}(  n-n_{i_0} ) (n-n_{i_0})^{-\gamma_{i_0}} +  e^{ - c (n-n_{i_0})^2 /(a_n^{(i_0)} )^2 } \ind_{\{\alpha_{i_0}=2\}}   \Big) \, .
\end{align*}
(We used that $a_{n}^{(i_0)}\leq c' m_{i_0}$ for $n\geq n_{i_0}$.)
We therefore get
\begin{align}
\sum_{n=n_{i_0}+r}^{+\infty}\bP\big(  \bS_n =\x \big) \le &  \sum_{n=n_{i_0}+r }^{+\infty} \frac{C n }{a_{n}^{(1)} \cdots a_{n}^{(d)}}  \gp_{i_0}(n-n_{i_0}) (n-n_{i_0})^{-\gamma_{i_0}}
\notag\\
&\qquad  \qquad
  +\sum_{n=n_{i_0}+r }^{+\infty} \frac{C}{a_{n}^{(1)} \cdots a_{n}^{(d)}} e^{ - c (n-n_{i_0})^2/(a_n^{(i_0)} )^2 } \ind_{\{\alpha_{i_0}=2\}}\, .
\label{largen1}
\end{align}
Let us deal with the first sum. If $r\le n_{i_0}$, it is bounded above by a constant times
\begin{align}
\sum_{n=n_{i_0}+r }^{2 n_{i_0}}   & \frac{n}{a_{n}^{(1)} \cdots a_n^{(d)} } \gp_{i_0}(  n-n_{i_0} ) (n-n_{i_0})^{-\gamma_{i_0}}  + \sum_{n > 2n_{i_0}}  \frac{n}{a_{n}^{(1)} \cdots a_n^{(d)}} \gp_{i_0}(  n) n^{-\gamma_{i_0}} \notag\\
&\le  \frac{ c n_{i_0}}{a_{n_{i_0}}^{(1)} \cdots a_{n_{i_0}}^{(d)}}  \gp_{i_0}(r) r^{1-\gamma_{i_0}}
 +  c \gp_{i_0}(n_{i_0}) \frac{n_{i_0}^{2-\gamma_{i_0}}}{a_{n_{i_0}}^{(1)} \cdots a_{n_{i_0}}^{(d)} } \le  \frac{ c' n_{i_0}}{a_{n_{i_0}}^{(1)} \cdots a_{n_{i_0}}^{(d)}}  \gp_{i_0}(r) r^{1-\gamma_{i_0}}\, .
 \label{largen1-bis}
\end{align}
For the first sum we used that $\gamma_{i_0}>1$, and for the second sum that the sequence under summation is regularly varying with index $1 - \gamma_{i_0}- \sum_{i=1}^d  \ga_{i_0}^{-1}<-1$.
In the case $r\ge n_{i_0}$, the first term in \eqref{largen1} is bounded by a constant times
\[  c \gp_{i_0}(r) \frac{r^{2-\gamma_{i_0}}}{a_{r}^{(1)} \cdots a_{r}^{(d)} }
\le \frac{c' n_{i_0}}{ a_{n_{i_0}}^{(1)} \cdots a_{n_{i_0}}^{(d)}} \gp_{i_0}(r) r^{1-\gamma_{i_0}} \, .
\]

For the second sum in \eqref{largen1} ($\ga_{i_0}=2$),  we get in the case $r\leq n_{i_0}$ that it is bounded by
\begin{align*}
&\frac{C}{a_{n_{i_0}}^{(1)} \cdots a_{n_{i_0}}^{(d)} }  \Big( \sum_{n=n_{i_0}+r }^{2n_{i_0}}  e^{ - c ((n-n_{i_0}) / m_{i_0})^2   }
 + \sum_{n> 2n_{i_0}} e^{- c (n /a_{n}^{(i_0)})^2 } \Big) \\ 
& \qquad \qquad \le   \frac{C }{a_{n_{i_0}}^{(1)} \cdots a_{n_{i_0}}^{(d)} }  \Big( m_{i_0} e^{ - c (r / m_{i_0})^2 } + m_{i_0} e^{- c' (n_{i_0} /m_{i_0})^2   }  \Big) \le  \frac{C a_{n_{i_0}}^{(i_0)}}{a_{n_{i_0}}^{(1)} \cdots a_{n_{i_0}}^{(d)} } e^{ - c (r / m_{i_0})^2 } \, .
\end{align*}
For $r\ge n_{i_0}$ the same bound holds with $a_{n_{i_0}}^{(i_0)} e^{ - c (r / m_{i_0})^2 }$ replaced by $a_r^{(i_0)} e^{ - c (r / a_r^{(i_0)})^2 }$, which is bounded by $a_{n_{i_0}}^{(i_0)} e^{ - c (r / a_r^{(i_0)})^2 }$. This term is therefore negligible compared to~\eqref{largen1-bis} (in the case $r\geq n_{i_0}$).  This concludes the proof of \eqref{lastsplit3}.



\subsection{First part in \eqref{split3}}
We again split the sum into two parts:
\begin{equation}
\label{smalln}
\sum_{n=1}^{n_{i_0} -  \gep^{-1} m_{i_0} } \bP(\bS_n =\x) = \sum_{n=1}^{n_{i_0}/2}  \bP(\bS_n =\x) +  \sum_{n=n_{i_0}/2+1}^{\gep^{-1} m_{i_0} }\bP(\bS_n =\x)\, .
\end{equation}

The second part in \eqref{smalln} can be treated in the same manner as for \eqref{lastsplit3}: we have for any $C_{10} m_{i_0} \le  r\le n_{i_0}/2$ 
\begin{equation}
\label{firstsplit3}
 \sum_{n=n_{i_0}/2+1}^{n_{i_0}  - r }\bP(\bS_n =\x) \le \frac{C a_{n_{i_0}}^{(i_0)}}{a_{n_{i_0}}^{(1)} \cdots a_{n_{i_0}}^{(d)}}  \Big( \frac{r}{m_{i_0}}  n_{i_0}\gp_{i_0}(r) r^{-\gamma_{i_0}} + e^{- c (r/m_{i_0})^2}\ind_{\{\ga_{i_0}=2\}}\Big)\, .
\end{equation}
Indeed, this comes from the same argument as for~\eqref{largen1}-\eqref{largen1-bis}---we are able to use Theorem~\ref{thm:locallimit1} since $x_{i_0}- b_n^{(i_0)} \geq a_n^{(i_0)}$ for $n\leq n_{i_0}- C_{10} m_{i_0}$.
Then, using~\eqref{firstsplit3} with $r_{\gep}= \gep^{-1}m_{i_0}$, we get as for~\eqref{conclusionlargen1} that
\begin{equation}
\label{conclusionsmalln1}
\sum_{n=n_{i_0}/2+1}^{n_{i_0}  - \gep^{-1} m_{i_0}}\bP(\bS_n =\x) 
 \le \frac{C a_{n_{i_0}}^{(i_0)}}{ a_{n_{i_0}}^{(1)} \cdots a_{n_{i_0}}^{(d)}} \big( \gep^{\gamma_1-1} +e^{-c\gep^{-2}}\big)\, .
\end{equation}

It remains to control the first term in~\eqref{smalln}, and this is where we use one of our assumptions in Theorem~\ref{thm:ren1}.

\paragraph{\it (i) If $\sum_{i=1}^{d} \ga_i^{-1} <2$ or (ii) $\gamma_{i_0}>\sum_{i\neq i_0} \ga_i^{-1}$}
We invoke Theorem~\ref{thm:locallimit1}: there is a constant $C$ such that uniformly for $n\le n_{i_0}/2$ (so $x_{i_0}-b_n^{(i_0)} \ge  c n_{i_0} \ge a_{n}^{(i_0)}$) we have
\begin{equation}
\label{smallnlocal}
 \bP\big( \bS_n =\x \big) \le \frac{C}{ a_n^{(1)} \cdots a_n^{(d)}} \Big( n \gp_{i_0}( n_{i_0})  n_{i_0}^{-\gamma_{i_0}} + e^{ - c (n_{i_0}/a_n^{(i_0)})^2 } \ind_{\{\alpha_1=2\}}  \Big)\, .
\end{equation}
First of all, bounding below $a_n^{(i)}$ by a constant, we have
\begin{equation}
\label{termexpo}
\sum_{n=1}^{n_{i_0}/2}  \frac{1}{ a_n^{(1)} \cdots a_n^{(d)}} e^{ - c (n_{i_0}/a_{n}^{(i_0)})^2 } \ind_{\{\alpha_1=2\}} \le C n_{i_0} e^{- c (n_{i_0} /m_{i_0})^2}  = o(1) \frac{a_{n_{i_0}}^{(i_0)}}{ a_{n_{i_0}}^{(1)} \cdots a_{n_{i_0}}^{(d)} }\, ,
\end{equation}
the last identity being valid since $e^{- c (n_{i_0} / m_{i_0})^2}$ decays faster than any power of $n_{i_0}$.

$\ast$ If $\sum_{i=1}^{d} \ga_i^{-1} <2$ we get that $n/(a_n^{(1)} \cdots a_n^{(d)})$ is regularly varying with exponent larger than $-1$, so that
\begin{equation}
\label{termnonexpo1}
\sum_{n=1}^{n_{i_0}/2}  \frac{C}{ a_n^{(1)} \cdots a_n^{(d)}}n \gp_{i_0}( n_{i_0})  n_{i_0}^{-\gamma_{i_0}} \le  \frac{ C n_{i_0}^{2-\gamma_{i_0}} \gp_{i_0}(n_{i_0}) }{  a_{n_{i_0}}^{(1)} \cdots a_{n_{i_0}}^{(d)}}  =  o(1) \frac{a_{n_{i_0}}^{(i_0)}}{ a_{n_{i_0}}^{(1)} \cdots a_{n_{i_0}}^{(d)} } \, .
\end{equation}
To obtain the $o(1)$, we used that $\gp_{i_0}( n_{i_0})  n_{i_0}^{2-\gamma_{i_0}} / a_{n_{i_0}}^{(i_0)}$ is regularly varying with exponent $2-\gamma_{i_0}-1/\ga_{i_0} <0$ (recall that $\gamma_{i_0}\geq \ga_{i_0} >1$).

$\ast$ In the case where $\sum_{i=1}^{d} \ga_i^{-1} \ge 2$ with $\gamma_{i_0} > \sum_{i\neq i_0} \ga_i^{-1}$, then 
\begin{equation}
\label{termnonexpo2}
\sum_{n=1}^{n_{i_0}/2}  \frac{n}{ a_n^{(1)} \cdots a_n^{(d)}}n \gp_{i_0}( n_{i_0})  n_{i_0}^{-\gamma_{i_0}} \le \tilde \gp( n_{i_0}) n_{i_0}^{-\gamma_{i_0}} = o(1) \frac{a_{n_{i_0}}^{(i_0)}}{ a_{n_{i_0}}^{(1)} \cdots a_{n_{i_0}}^{(d)} } 
\end{equation}
since the sum grows like a slowly varying function $\tilde \gp$ (or remains bounded).  The  $o(1)$ comes from the fact that  $a_{n_{i_0}}^{(i_0)} / (a_{n_{i_0}}^{(1)} \cdots a_{n_{i_0}}^{(d)})$  is regularly varying with exponent larger than~$-\gamma_{i_0}$.

%

\paragraph{\it (iii) If Assumption~\ref{hyp:2} holds}
For $n\le n_{i_0}/2$, we have $x_{i_0} -b_n^{(i_0)} \ge c n_{i_0} \ge a_{n}^{(i_0)}$:  Theorem~\ref{thm:locallimit2} gives that 
\[
\bP\big( \bS_n =\x \big) \le \frac{c}{ \prod_{i=1}^d\big(  |x_i-b_n^{(i)}|\vee a_{n}^{(i)} \big)} \Big(   n \gp(n_{i_0}) n_{i_0}^{-\gamma_{i_0}} + e^{- c n_{i_0} / a_{n}^{(i_0)}} \ind_{\{\ga_{i_0} =2\}}\Big)\, .
\]
We notice that if $b_n^{(i)}\equiv 0$, then since we assumed that $t_i \neq 0$, we have that $|x_i| \ge c a_{n_{i_0}}^{(i)}$ for all $n\le n_{i_0}/2$, provided $n_{i_0}$ is large. Otherwise, we write $|x_i- b_n^{(i)}| \ge |b_{n_{i_0}}^{(i)} - b_n^{(i)}| - |x_i -b_{n_{i_0}}^{(i)}|$. Since we work along the favorite direction we have $|x_i -b_{n_{i_0}}^{(i)}| \leq c a_{n_{i_0}}^{(i)}$. 
Also, using that $b_n^{(i)}$ is regularly varying with exponent $1$ we have that $|b_{n_{i_0}}^{(i)} - b_n^{(i)}| \ge c |b_{n_{i_0}}^{(i)}|$ uniformly for $n \le n_{i_0}/2$. Therefore, using that $|b_n^{(i)}|/a_n^{(i)} \to +\infty$, for $n\le n_{i_0}/2$ we have $|x_i-b_n^{(i)}| \ge c' b_{n_{i_0}}^{(i_0)} \geq a_{n_{i_0}}^{(i_0)}$ provided that $n_{i_0}$ is large enough.
All together, recalling also that $|x_{i_0} - b_{n_{i_0}}^{(i_0)} | \ge c n_{i_0}$, we get that
\[
\bP\big( \bS_n =\x \big) \le \frac{c' a_{n_{i_0}}^{(i_0)}}{ a_{n_{i_0}}^{(1)} \cdots a_{n_{i_0}}^{(d)}}    n \gp(n_{i_0}) n_{i_0}^{-(1+\gamma_{i_0})} +  \frac{c'}{ a_{n_{i_0}}^{(1)} \cdots a_{n_{i_0}}^{(d)}}e^{- c (n_{i_0} / m_{i_0})^2} \ind_{\{\ga_{i_0} =2\}}\, .
\]
Summing the second term over $n\leq n_{i_0}/2$, \eqref{termexpo} already gives that it is negligible compared to \eqref{conclusion:main}.
For the other term, we  have  
\begin{align}
\sum_{n=1}^{n_{i_0}/2} \frac{a_{n_{i_0}}^{(i_0)}}{ a_{n_{i_0}}^{(1)} \cdots a_{n_{i_0}}^{(d)}}    n \gp(n_{i_0}) n_{i_0}^{-(1+\gamma_{i_0})} \le \frac{C a_{n_{i_0}}^{(i_0)}}{ a_{n_{i_0}}^{(1)} \cdots a_{n_{i_0}}^{(d)}}  n_{i_0}^{1-\gamma_{i_0}} \gp(n_{i_0}) = o(1) \frac{a_{n_{i_0}}^{(i_0)}}{ a_{n_{i_0}}^{(1)} \cdots a_{n_{i_0}}^{(d)} } \, ,
\label{conclusionsmalln0}
\end{align}
the last identity holding since $\gamma_{i_0}>1$.

As a conclusion, we obtain in all cases (i)-(ii)-(iii) that
\begin{equation}
\label{conclusionsmalln2}
\sum_{n=1}^{n_{i_0}/2} \bP(\bS_n =\x) = o(1) \frac{a_{n_{i_0}}^{(i_0)}}{ a_{n_{i_0}}^{(1)} \cdots a_{n_{i_0}}^{(d)} } \, .
\end{equation}

\subsection*{Conclusion}

Combining \eqref{conclusion:main} with \eqref{conclusionlargen1}, \eqref{conclusionsmalln1} and \eqref{conclusionsmalln2}, then letting $n\to+\infty$ and finally $\gep\downarrow 0$, we get the conclusion~\eqref{casIfavorite}.

\section{Proof for case III: $\ga_{i_0}=1$}
\label{sec:casIII}

In this section, we prove Theorem~\ref{thm:ga=1}.

\subsection{Preliminaries}
\label{sec:casIII-prelim}
Recall that $b_n^{(i)}\equiv 0$ for all $i < i_0$, and that $\ga_{i_0}=1$ with $b_n^{(i_0)} \not \equiv 0$ (and  $|b_{n}^{(i_0)}|/a_{n}^{(i_0)}\to +\infty$). Recall that $n_{i_0}$ is such that $b_{n_{i_0}}^{(i_0)} =x_{i_0}$ ($b_{n_{i_0}}^{(i_0)}$ and $x_{i_0}$ need to have the same sign). 
First, we stress that $\mu_{i_0}(a_n^{(i_0)}) \sim \mu_{i_0}(|b_n^{(i_0)}|)$ (this is trivial if $\mu_{i_0}\in \bbR^*$, and we refer to Lemma~4.3 in \cite{cf:B17} in the case $|\mu_{i_0}|=+\infty$ or $0$ with $p_{i_0}\neq q_{i_0}$): we get that $\mu_{i_0}(a_{n}^{(i_0)}) \sim \mu_{i_0}(|x_{i_0}|)$, and hence $x_{i_0}=b_{n_{i_0}}^{(i_0)} \sim n_{i_0} \mu_{i_0} (|x_{i_0}|)$. We therefore conclude that $n_{i_0} \sim x_{i_0}/\mu_{i_0}(|x_{i_0}|)$ as $|x_{i_0}|\to+\infty$ (provided that $x_{i_0}$ and $\mu_{i_0}(|x_{i_0}|)$ have the same sign).

We also define
\begin{align}
\label{def:m}
m_{i_0}  := \frac{a_{n_{i_0}}^{(i_0)}}{| \mu_{i_0}(a_{n_{i_0}}^{(i_0)})| }\sim n_{i_0} \frac{L_{i_0}(a_{n_{i_0}}^{(i_0)})}{ |\mu_{i_0}(a_{n_{i_0}}^{(i_0)} )|} = o(n_{i_0})\, .
\end{align}
We used the definition \eqref{def:an} of $a_{n}^{(i_0)}$ for the asymptotic equivalence, and then that $L_{i_0}(x) = o(\mu_{i_0}(x))$ (both if $\mu_{i_0}\in \bbR^*$ or $p_{i_0}=q_{i_0}$, thanks to \cite[Prop.~1.5.9.a]{cf:BGT}). We stress that the typical number of steps for the $i_0$-th coordinate to reach $x_{i_0}$ is $n_{i_0}+O(m_{i_0})$. The intuition is that, when looking for which $n$ we have that $x_{i_0}- b_n^{(i_0)} (= b_{n_{i_0}}^{(i_0)} - b_n^{(i_0)})$ is of order $a_{n}^{(i_0)} \sim  a_{n_{i_0}}^{(i_0)}$,  and using that $|b_n - b_{n_{i_0}}|$ is roughly of the order of $(n-n_{i_0}) |\mu_{i_0}(a_{n_{i_0}}^{(i_0)})|$ (see \eqref{eq:rangen} below for details) we find that $n - n_{i_0}$ has to be of the order of $ a_{n_{i_0}}^{(i_0)} / |\mu_{i_0}(a_{n_{i_0}}^{(i_0)})| =:m_{i_0}$.
This intuition is confirmed in \cite{cf:AA,cf:HR79}, where it is shown that $(N(x_{i_0}) - n_{i_0})/m_{i_0}$ converges in distribution as $|x_{i_0}| \to +\infty$, where $N(x_{i_0}) = \inf\{n, \, S_n^{(i_0)}>x_{i_0}\}$ is the first passage time to $x_{i_0}$ (if $x_{i_0}>0$).

Then we fix $\gep>0$, and again split $G(\x)$ into three parts:
\begin{equation}
\label{split3-3}
G(\x)  =  \bigg( \sum_{n<n_{i_0} - \gep^{-1} m_{i_0} } +   \sum_{n=n_{i_0} - \gep^{-1} m_{i_0}}^{n_{i_0} + \gep^{-1} m_{i_0} } + \sum_{n>n_{i_0}+ \gep^{-1}m_{i_0} } \bigg) \bP\big( \bS_n =\x \big) \, .
\end{equation}
As suggested above, the main contribution is the middle sum.
In the following, we work with $x_{i_0}$ and $\mu_{i_0}(x_{i_0})$ positive, simply to avoid the use of absolute values.

\subsection{Main contribution}
For the middle sum in \eqref{split3-3}, the fact that we work along the favorite scaling tells that we can use the local limit theorem~\eqref{LLT}, and get that
\begin{equation}
\label{maintermga=1}
\sum_{n=n_{i_0} - \gep^{-1}  m_{i_0} }^{n_{i_0} + \gep^{-1}  m_{i_0} }  \bP\big( \bS_n =\x \big)
=  \sum_{n=n_{i_0} - \gep^{-1}  m_{i_0} }^{n_{i_0} + \gep^{-1}  m_{i_0} } 
\frac{1+o(1)}{a_n^{(1)}\cdots a_n^{(d)}} g_{\bga}\Big( \frac{x_{i_0}-b_n^{(1)}}{a_n^{(1)}}, \ldots, \frac{x_d - b_n^{(d)} }{a_{n}^{(d)}} \Big).
\end{equation}
Note that, for the range of $n$ considered, we have that $n = (1+o(1)) n_{i_0}$ (recall~\eqref{def:m}), so that $a_n^{(i)} = (1+o(1)) a_{n_{i_0}}^{(i)}$.
 
$\ast$ if $i>i_0$ or $\ga_i <1$ then  $b_n^{(i)} \equiv 0$: thanks to  our assumption \eqref{def:favdir3}, we get that
\[  \frac{x_i}{a_{n_{i_0}}^{(i)}}= (1+o(1)) t_i \, .\]

 $\ast$ if $\ga_i=1$, then we get that for $|n-n_{i_0} |\leq \gep^{-1} m_{i_0}$ 
\begin{equation}
\label{eq:rangen}
\frac{x_i-b_n^{(i)} }{ a_{n_{i_0}}^{(i)}}  = \frac{x_i-b_{n_{i_0}}^{(i)} }{ a_{n_{i_0}}^{(i)}} +  \frac{(n_{i_0} -n)\mu_i(a_{n_{i_0}}^{(i)})}{a_{n_{i_0}}^{(i)}} +  \frac{n_{i_0}}{a_{n_{i_0}}^{(i)}} \big( \mu_i(a_{n_{i_0}}^{(i)}) - \mu_i(a_n^{(i)}) \big)\,. 
\end{equation}
The first part goes to $t_i\in \bbR$, thanks to~\eqref{def:favdir3}.
The last part goes to $0$ thanks to Claim~\ref{claim:mu} (recall $n_{i_0} \sim a_{n_{i_0}}^{(i)} L_i(a_{n_{i_0}}^{(i)})^{-1}$), since $a_n^{(i)}/a_{n_{i_0}}^{(i)}$ goes to $1$. For the middle part, we use assumption~\eqref{tildea} to get that $\mu_i(a_{n_{i_0}}^{(i)})/a_{n_{i_0}}^{(i)} \sim \tilde a_{i,i_0}  m_{i_0}$. In the end, we obtain that uniformly for $|n-n_{i_0}| \leq \gep^{-1}m_{i_0}$
\begin{equation}
\label{tes:favdir2}
\frac{x_i-b_n^{(i)} }{ a_{n_{i_0}}^{(i)}}  = t_i + \frac{ \tilde a_{i,i_0}}{ m_{i_0}} (n_{i_0} -n) +o(1) \, .
\end{equation}
Using this in the sum \eqref{maintermga=1}, and with the definition $\tilde \kappa_i =\tilde a_{i,i_0} \ind_{\{i\geq i_0\}}$ (with $\tilde a_{i,i_0}=0$ if $\ga_i<1$),  we get thanks to the continuity of $g_{\bga}$ that the right-hand side of~\eqref{maintermga=1} is
\begin{align}
 \frac{1+o(1)}{a_{n_{i_0}}^{(1)} \cdots a_{n_{i_0}}^{(d)} } &
\sum_{n=n_{i_0} - \gep^{-1} m_{i_0} }^{n_{i_0} +  \gep^{-1} m_{i_0} }   g_{\bga} \Big( t_1+ \frac{ \tilde \kappa_1}{m_{i_0}} (n_{i_0} -n), \ldots, t_d+ \frac{ \tilde \kappa_d }{ m_{i_0} } (n_{i_0} -n) \Big)  \notag\\
&   = (1+o(1))\frac{ m_{i_0}}{ a_{n_{i_0}}^{(1)} \cdots a_{n_{i_0}}^{(d)} } \int_{-\gep^{-1}}^{\gep^{-1}} g_{\bga} \big( t_1+ \tilde \kappa_1 u , \ldots, t_d+  \tilde \kappa_d u \big) \dd u\, ,
\label{conclusionmainga=1}
\end{align}
where we used a Riemann-sum approximation to obtain the last integral.

\subsection{Third term in \eqref{split3-3}}

First of all, let us stress that there is a constant $C_{12}$ such that  for $2 n_{i_0} \ge n \ge n_{i_0} + C_{12}\, m_{i_0}$, we have  
\begin{align}
x_{i_0}-b_{n}^{(i_0)} &= (n_{i_0} - n ) \mu_{i_0}(a_{n_{i_0}}^{(i_0)}) + n \big(\mu_{i_0}(a_{n_{i_0}}^{(i_0)}) - \mu_{i_0}(a_n^{(i_0)}) \big) \notag\\
& \leq  c (n_{i_0} - n ) \mu_{i_0}(a_{n_{i_0}}^{(i_0)}) \le -  a_n^{(i_0)}\, .
\label{remarquex-b}
\end{align}
We used that $\mu_{i_0}(a_{n_{i_0}}^{(i_0)}) - \mu_{i_0}(a_n^{(i_0)}) = O(L_{i_0}(a_n^{(i_0)})  )$ for $n_{i_0}\leq n \le 2 n_{i_0}$ (see Claim~\ref{claim:mu}), and the fact that $n_{i_0} L_{i_0}(a_n^{(i_0)}) \leq c a_{n_{i_0}}^{(i_0)}$. For $n\geq 2n_{i_0}$, we have that $x_{i_0}-b_{n}^{(i_0)} \leq - c b_{n_{i_0}}^{(i_0)} \leq - a_{n_{i_0}}^{(i_0)}$: the first inequality comes from the fact that $b_{n}^{(i_0)}$ is regularly varying with exponent~$1$, the second from the fact that $b_{n}^{(i_0)}/a_n^{(i_0)} \to +\infty$.

Therefore, we can use Theorem~\ref{thm:locallimit2} (recall $\ga_{i_0}=1$) to get that for all $n \ge n_{i_0} + C_{12} m_{i_0}$,
\begin{align}
\label{use1:locallimit2}
\bP\big(  \bS_n =\x \big) \le \frac{C a_{n}^{(i_0)}}{a_{n}^{(1)} \cdots a_n^{(d)}}  \, n L_{i_0}\big( |x_{i_0}-b_{n}^{(i_0)}| \big) \big| x_{i_0}-b_{n}^{(i_0)}   \big|^{-2}  \, .
\end{align}
Then, for any $n_{i_0}\geq r \ge C_{12} \, m_{i_0}$, setting $j=n-n_{i_0}$ so $|x_{i_0}-b_{n}^{(i_0)}| \ge c j \mu_{i_0}( a_{n_{i_0}}^{(i_0)})$ (see \eqref{remarquex-b}), 
\begin{align}
\sum_{n=n_{i_0}+r}^{2 n_{i_0}}\bP\big(  \bS_n =\x \big) & \le  \frac{C a_{n_{i_0}}^{(i_0)}}{a_{n_{i_0}}^{(1)} \cdots a_{n_{i_0}}^{(d)}}   \sum_{j=r }^{2 n_{i_0}}  n_{i_0}  L_{i_0}\big( j \mu_{i_0}(a_{n_{i_0}}^{(i_0)}) \big) \big(  j \mu_{i_0}(a_{n_{i_0}}^{(i_0)})\big)^{-2} \notag\\
&\leq \frac{C'  a_{n_{i_0}}^{(i_0)} }{a_{n_{i_0}}^{(1)} \cdots a_{n_{i_0}}^{(d)}}  \times n_{i_0}\, \frac{1}{\mu_{i_0}(a_{n_{i_0}}^{(i_0)})}    L_{i_0} \big( r\mu_{i_0}(a_{n_{i_0}}^{(i_0)}) \big) \big( r \mu_{i_0}(a_{n_{i_0}}^{(i_0)})\big)^{-1} \, .
\label{8.9}
\end{align}
Then, setting $r=\gep^{-1} m_{i_0}$, and using the definition \eqref{def:an} of $a_n^{(i_0)}$, we get that 
\begin{equation}
\label{conclusionalpha1-3}
\sum_{n=n_{i_0}+\gep^{-1 }m_{i_0}}^{2 n_{i_0}} \bP\big(  \bS_n =\x \big) \leq   \frac{C'' \gep  m_{i_0}}{a_{n_{i_0}}^{(1)} \cdots a_{n_{i_0}}^{(d)}}   \, .
\end{equation} 
For the sum with $n\ge 2 n_{i_0}$, we use \eqref{use1:locallimit2} with the fact that $x_{i_0} -b_n^{(i_0)}\leq - cb_n^{(i_0)}$: 
\begin{align}
\label{lastsplitga=1}
\sum_{n=2n_{i_0}}^{+\infty}\bP\big(  \bS_n =\x \big)
& \leq \sum_{n=2 n_{i_0}}^{+\infty} \frac{ C  a_{n}^{(i_0)} }{a_{n}^{(1)} \cdots a_{n}^{(d)}} \times   n L_{i_0}(b_n^{(i_0)}) (b_n^{(i_0)})^{-2} \,  \notag\\
& \leq \frac{ C'  a_{n_{i_0}}^{(i_0)} }{a_{n_{i_0}}^{(1)} \cdots a_{n_{i_0}}^{(d)}} \times   n_{i_0}^2 L_{i_0}(b_{n_{i_0}}^{(i_0)}) (b_{n_{i_0}}^{(i_0)})^{-2} = \frac{o(1) m_{i_0}}{a_{n_{i_0}}^{(1)} \cdots a_{n_{i_0}}^{(d)}}\, .
\end{align}
For the summation, we used  the sequence under summation is regularly varying with exponent smaller than $-1$.
For the last $o(1)$, we used that $b_{n_{i_0}}^{(i_0)} \sim n_{i_0}\mu_{i_0}(a_{n_{i_0}}^{(i_0)}) \sim n_{i_0}\mu_{i_0}(b_{n_{i_0}}^{(i_0)})$, and that $L_{i_0}(b_{n_{i_0}}^{(i_0)})/\mu_{i_0}(b_{n_{i_0}}^{(i_0)}) \to 0$.

\subsection{First term in \eqref{split3-3}}

As in \eqref{remarquex-b}, we have $x_{i_0}- b_n^{(i_0)} \geq a_{n}^{(i_0)}$ for $n_{i_0}/2 \le n\leq n_{i_0} - C_{12}\, m_{i_0}$. Hence Theorem~\ref{thm:locallimit2} gives the same bound as~\eqref{use1:locallimit2}: for any $r \geq C_{12}\, m_{i_0}$, setting $j= n_{i_0}- n $ ($x_{i_0} -b_{n}^{(i_0)} \leq - c j \mu_{i_0}(a_{n_{i_0}}^{(i_0)})$), we have as in \eqref{8.9}
\begin{align}
\sum_{n=n_{i_0}/2}^{n_{i_0}- r}\bP\big(  \bS_n =\x \big) & \le \frac{C a_{n_{i_0}}^{(i_0)} }{a_{n_{i_0}}^{(1)} \cdots a_{n_{i_0}}^{(d)}} \sum_{j=r}^{n_{i_0}/2} n_{i_0} L_{i_0}\big( j \mu_{i_0}(a_{n_{i_0}}^{(i_0)})\big) \big( j \mu_{i_0}(a_{n_{i_0}}^{(i_0)}  \big)^{-2}  \notag\\
&\leq  \frac{C m_{i_0}}{a_{n_{i_0}}^{(1)} \cdots a_{n_{i_0}}^{(d)}} \times  n_{i_0}  L_{i_0} \big( r\mu_{i_0}(a_{n_{i_0}}^{(i_0)}) \big) \big( r \mu_{i_0}(a_{n_{i_0}}^{(i_0)})\big)^{-1} \, .
\label{conclusionalpha1-4}
\end{align}
With $r=\gep^{-1} m_{i_0}$, we obtain the same upper bound as in \eqref{conclusionalpha1-3}.

For the term $n\leq n_{i_0}/2$, we use that $x_{i_0} -b_n^{(i_0)}\geq c b_{n_{i_0}}^{(i_0)}$ provided that $n_{i_0}$ is large enough. 
Since we work along the favorite direction \eqref{def:favdir3}, we have that $|x_i -b_n^{(i)} | \ge c a_{n_{i_0}}^{(i)}$ for all $n\leq n_{i_0}/2$ (recall we assumed $t_i \neq 0$ if $b_n^{(i)}\equiv 0$):
Theorem~\ref{thm:locallimit2} gives that 
\[\bP\big(  \bS_n =\x \big) \leq  \frac{C a_{n_{i_0}}^{(i_0)}}{a_{n_{i_0}}^{(1)} \cdots a_{n_{i_0}}^{(d)}} \,  n   L_{i_0}(b_{n_{i_0}}^{(i_0)}) (b_{n_{i_0}}^{(i_0)})^{-2} \, . \]
Hence, similarly to~\eqref{lastsplitga=1}, we get that
\begin{align}
\sum_{n=1}^{n_{i_0}/2} \bP\big(  \bS_n =\x \big)
& \leq \frac{C' a_{n_{i_0}}^{(i_0)} }{a_{n_{i_0}}^{(1)} \cdots a_{n_{i_0}}^{(d)}} \times n_{i_0}^2 L_{i_0}(b_{n_{i_0}}^{(i_0)}) (b_{n_{i_0}}^{(i_0)})^{-1}  = \frac{o(1) m_{i_0}}{a_{n_{i_0}}^{(1)} \cdots a_{n_{i_0}}^{(d)}}   \, .
\label{conclusionalpha1-5}
\end{align}

\subsection*{Conclusion}
As for the previous sections, collecting \eqref{conclusionmainga=1} together with \eqref{conclusionalpha1-3}-\eqref{lastsplitga=1} and \eqref{conclusionalpha1-4}-\eqref{conclusionalpha1-5}, then letting $n\to+\infty$ and finally $\gep\downarrow 0$, we get the conclusion~\eqref{casIIIfavorite}.

\section{Proofs when $\mathbf{x}$ is away from the favorite direction or scaling}
\label{sec:away}

In this section, we prove the renewal estimates when away from the favorite direction or scaling, \textit{i.e.}\ we prove Theorems~\ref{thm:awayI} (in Section~\ref{sec:thmawayI}), \ref{thm:casII-IIIb} (in Section~\ref{sec:casII-IIIawayb}), and \ref{thm:casII-IIIa} (in Section~\ref{sec:casII-IIIawaya}).
Again, let us work with all $x_i$'s positive in this section, to avoid the use of  absolute values.
Recall also that we work in dimension $d=2$ with $\bga\neq (2,2)$ and under Assumption~\ref{hyp:2}.

\subsection{Case I, proof of Theorem~\ref{thm:awayI}}
\label{sec:thmawayI}

Recall that $n_i$ is defined up to asymptotic equivalence by $a_{n_i}^{(i)} \sim x_i$, and $i_0, i_1$ are such that $n_{i_0} = \min\{ n_1, n_2\}$, $n_{i_1} = \max\{n_1,n_2\}$. In such a way, we have that $x_i/a_{n_{i_0}}^{(i)} \ge c$ for $i=1,2$.
Let us work in the case where $x_{i_1}/a_{n_{i_0}}^{(i_1)} \ge C_{13}$ for some large constant $C_{13}$ (otherwise one falls in the favorite scaling~\eqref{def:favdir2}): it is equivalent to having $n_{i_1} /n_{i_0}$  larger than some large constant $C'_{13}$.
We let $n_{i_0} \le m \le n_{i_1}$ (we optimize its value below), and decompose $G(\x)$ into two parts
\begin{equation}
\label{awaysplit2}
G(\x) = \sum_{n=1}^m \bP(\bS_n = \x) + \sum_{n=m+1}^{\infty} \bP(\bS_n =\x)\, .
\end{equation}

For the first part, since $x_{i_1}\geq c a_{n}^{(i_1)}$ for $n\leq m \le n_{i_1}$, Theorem~\ref{thm:locallimit2} gives us that
\begin{align*}
\bP(\bS_n =\x) &\le \frac{C}{  (x_1 \vee a_n^{(1)})  (x_2 \vee a_{n}^{(2)})}  \Big( n \gp_{i_1}(x_{i_1}) x_{i_1}^{-\gamma_{i_1}} + e^{- c (x_{i_1} /a_n^{(i_{1})})^2} \ind_{\{\ga_{i_1}=2\}}\Big) \\
&\le   \frac{C}{  a_{n_{i_0}}^{(i_0)} x_{i_1} } \,  \Big( \frac{n}{n_{i_1}} + e^{- c (a_{n_{i_1}}^{(i_1)}/a_n^{(i_1)})^2} \ind_{\{\ga_{i_1}=2\}} \Big) \leq   \frac{C}{  a_{n_{i_0}}^{(1)}  a_{n_{i_0}}^{(2)} } \cdot  \frac{a_{n_{i_0}}^{(i_1)}}{a_{n_{i_1}}^{(i_1)}}  \frac{n}{n_{i_1}} \, .
\end{align*}
For the second inequality, we used that $x_{i_0} \vee a_{n}^{(i_0)} \geq a_{n_{i_0}}^{(i_0)}$, that  $x_{i_1} \geq a_{n}^{(i_1)}$ for all $n \leq n_{i_0}$, and also that  $x_{i_1} \sim a_{n_{i_1}}^{(i_{1})}$, so that in particular $\gp_{i_1}(x_{i_1}) x_{i_1}^{-\gamma_{i_1}} \le c /n_{i_1} $ by definition of $a_{n_{i_1}}^{(i_1)}$. 
Therefore, we get that
\begin{align}
\sum_{n=1}^{m} \bP(\bS_n =\x) &\le \frac{C' n_{i_0} }{ a_{n_{i_0}}^{(1)}  a_{n_{i_0}}^{(2)}} \times \frac{a_{n_{i_0}}^{(i_1)}}{a_{n_{i_1}}^{(i_1)}} \frac{m}{n_{i_1}} \frac{m}{n_{i_0}} \, .
\label{smallerm}
\end{align}

For the second part in \eqref{awaysplit2}, we fix some $\gd>0$ (small), and we use the local limit theorem \eqref{LLT} to get that (using that $\ga_1^{-1}+\ga_2^{-1}>1$)
\begin{align}
\sum_{n=m+1}^{+\infty} \bP( \bS_n = \x ) & \le \sum_{n=m +1}^{+\infty} \frac{C}{ a_{n}^{(1)}  a_n^{(2)}}   \le \frac{c m}{ a_{m}^{(1)}  a_{m}^{(2)}} \notag\\
&\le \frac{ c_{\gd} n_{i_0}}{a_{n_{i_0}}^{(1)}  a_{n_{i_0}}^{(2)} } \times \Big( \frac{m}{n_{i_0}}\Big)^{-\kappa_{\gd}}
\label{largerm}
\end{align}
with $\kappa_{\gd} = \ga_1^{-1} + \ga_2^{-1}-1 -\gd >0$. We used in the last inequality that $n/(a_n^{(1)} a_n^{(2)})$ is regularly varying with exponent $1 - \ga_1^{-1} -\ga_2^{-1}$, together with Potter's bound.

Then, it remains to optimize our choice of $m$: combining~\eqref{largerm} with \eqref{smallerm}, and using that there is a constant $c_{\gd}$ such that $ a_{n_{i_0}}^{(i_1)}/ a_{n_{i_1}}^{(i_1)} \le c_{\gd}(n_{i_1}/n_{i_0})^{-1/\ga_{i_1} +\gd}$, we get that
\begin{align}
  \label{optimizeinn}
  G(\x) & \le \frac{C n_{i_0} }{ a_{n_{i_0}}^{(1)}  a_{n_{i_0}}^{(2)}} \Big(   \Big(\frac{n_{i_1}}{n_{i_0}} \Big)^{-1-\frac{1}{\ga_{i_1}} +\gd}  \Big(\frac{m}{n_{i_0}}\Big)^2  + \Big(\frac{m}{n_{i_0}} \Big)^{-\kappa_{\gd}}\Big) \\
  & \le   \frac{C n_{i_0} }{ a_{n_{i_0}}^{(1)} \cdots a_{n_{i_0}}^{(d)}}   \times \Big(\frac{n_{i_1}}{n_{i_0}} \Big)^{-\kappa_{\gd}(1+1/\ga_i +\gd)/(2+\kappa_{\gd})}\, .
\notag
\end{align}
For the last inequality, we optimized in $m$, by choosing $m/n_{i_0} = (n_{i_1}/n_{i_0})^{ (1+1/\ga_{i_1} -\gd)/(2+\kappa_{\gd})}$. This gives the first part of the statement, \textit{i.e.}~\eqref{casIIaway}.

\smallskip
In the case where $(\bS_n)_{n\geq 0}$ is a renewal process, \textit{i.e.}\ $\bX_1\geq 0$, (in particular $\ga_{i}\in (0,1)$), then one has a much sharper estimate than~\eqref{largerm}. Indeed, we have the following large deviation result: for all $n \ge C'_{13} n_1$ (so that $x_{i_0} \leq   a_{n}^{(i_0)}/4$),
\begin{equation}
\bP(\bS_n = \x)  \le \frac{C}{a_n^{(1)}  a_n^{(2)}} \bP(S_{\lfloor n/2 \rfloor}^{(i_0)} \le x_{i_0}/2 ) \le  \frac{C}{a_n^{(1)}  a_n^{(2)}} e^{- c (x_{i_0}/a_n^{(i_0)})^{- \zeta_{i_0} } } \, ,
\end{equation} 
for some exponent $\zeta_{i_0}$ (whose value depends on $\ga_{i_0}$).
The first inequality follows from the same argument as for~\eqref{splithalf}-\eqref{firstpiece}, we leave the details to the reader. The second inequality is a  large deviation result, see for instance~\cite[Lemmas~A.3]{cf:AB}.
As a consequence, we have the analogous bound as in \eqref{largerm},
\[
\sum_{n=m}^{+\infty} \bP(\bS_n = \x)  \le   \frac{c_{\gd} n_{i_0}}{a_{n_{i_0}}^{(1)} a_{n_{i_0}}^{(2)}}  e^{  - c (a_{n_{i_0}}^{(i_0)} / a_m^{(i_0)} )^{- \zeta_{i_0} }  } \, .
\]
The last part decays faster than any power of $m/n_{i_0}$, so that \eqref{optimizeinn} holds with $\kappa_{\gd}$ replaced by an arbitrarily large constant. This gives the second statement of Theorem~\ref{thm:awayI}.

\subsection{Case II-III (a), proof of Theorem~\ref{thm:casII-IIIb}}
\label{sec:casII-IIIawayb}

Here, we assume $\ga_1, \ga_2 \geq 1$ and  for $i=1,2$:  either $\mu_i \in \bbR^*$, or $\ga_i=1$ with $p_i\neq q_i$.
Recall that $n_i$ is such that $b_{n_i}^{(i)} =x_i$ (we work in the case where they are both positive), and $m_i =a_{n_i}^{(i)} /\mu_i(a_{n_i}^{(i)})$ (one can replace $\mu_{i} (a_{n_i}^{(i)})$ by $\mu_{i}$ if $\ga_i>1$). Note that in any case we have $m_i = o(n_i)$ as $n_i\to+\infty$.

\subsubsection{A preliminary estimate}
We prove first the following result, that will be useful in this section and the next one: for $i=1,2$
\begin{equation}
\label{sumlog1}
\begin{split}
\tif   m \in [m_i ,n_i], & \quad \sum_{n=n_{i}- m}^{ n_{i} + m}  \frac{1}{a_n^{(i)} \vee |x_i - b_n^{(i)}|}\leq  \frac{C}{\mu_{i}(a_{n_i}^{(i)})} \log \big(1+ \frac{m}{m_{i}} \big); \\
\tif m \ge 2 n_i,&  \quad \sum_{n=1}^{m}  \frac{1}{a_n^{(i)} \vee |x_i - b_n^{(i)}|} \leq  \frac{C}{ \mu_{i}(a_{n_i}^{(i)}) \wedge \mu_i( a_m^{(i)})} \log  m \, .
\end{split}
\end{equation}

To show this, recall that $|x_i-b_n^{(i)}| \leq  ca_{n_i}^{(i)}$ for all $n$ between $n_i-m_i$ and $n_i+m_i$ (see \eqref{test:favdir1} and \eqref{tes:favdir2}).
Therefore, we have that
\[ \sum_{n=n_{i}- m_i}^{ n_{i} + m_i}  \frac{1}{a_n^{(i)} \vee |x_i - b_n^{(i)}|} \leq  \frac{c m_i}{a_n^{(i)}} \leq \frac{c}{\mu_i(a_{n_i}^{(i)} )} \, .\]

Then, for $n_i + m_i \le n\leq 2n_i$ (so $|x_i-b_n^{(i)}| \geq  c' a_{n_i}^{(i)}$, see \eqref{remarquex-b} in the case $\ga_i =1$), setting $j=n-n_i$ we have $|x_i - b_n^{(i)}|\geq c j\mu_i(a_{n_i}^{(i)})$ (this is trivial if $\ga_i>1$): we obtain for $ m\in [m_i, n_i]$
\[\sum_{n=n_i+ m_i}^{n_i+m} \frac{1}{a_n^{(i)} \vee |x_i - b_n^{(i)}|} 
\leq \sum_{j=m_i}^{m} \frac{c}{j\mu_i(a_{n_i}^{(i)})} \leq \frac{c}{\mu_i(a_{n_i}^{(i)})} \log \big(1+\frac{m}{m_i} \big) \, .
\]
A similar argument holds for the sum between $n-m$ and $n- n_i$, and this concludes the proof of the first line of \eqref{sumlog1}.

For $n\ge 2n_i$, we have $|x_i - b_{n}^{(i)}| \geq c b_n^{(i)}$ (which is larger than $a_n^{(i)}$, recall $b_n^{(i)}/a_n^{(i)}\to+\infty$). Therefore, using $b_n^{(i)} = n \mu_i(a_n^{(i)})$, we get that
\[
\sum_{n=2 n_i}^{m} \frac{1}{a_n^{(i)} \vee |x_i - b_n^{(i)}|} \leq \sum_{n=2n_i}^{m}\frac{c}{n \mu_i(a_n^{(i)})} \leq \frac{C}{\mu_i(x_1) \wedge \mu_i(b_{m}^{(i)})} \log (m/n_i)
\]
The last inequality comes from the fact that: either $\mu_i$ is a positive constant (so the bound is trivial); or $\mu_{i}=+\infty$ (if $\ga_i=1$, $p_i>q_i$) so $\mu_i(x) \sim (p_i-q_i) \ell(x)$ with $\ell(\cdot)$ non-decreasing, so $\mu_i(a_n^{(i)})\geq c\mu_i(a_{n_i}^{(i)})$; or $\mu_i=0$ (if $\ga_i=1$, $p_i<q_i$) so $\mu_i(x) \sim (q_i-p_i) \ell^{\star}(x)$ with $\ell^{\star}(\cdot)$ non-increasing, so $\mu_i(a_n^{(i)})\geq c\mu_i(a_{m}^{(i)})$.

\subsubsection{The case $n_1\leq n_2\leq 2n_1$}

Let us mention that the case $n_2\leq n_1\leq 2 n_2$ would be treated symmetrically.
We let $m= (n_2- n_1)/2$ so that $n_1+m = n_2-m$, and we assume that $m\geq m_1\vee m_2$ (otherwise we are in the favorite direction).
We write
\begin{equation}
\label{almostbalanced}
G(\x) = \Big( \sum_{n=0}^{n_1+m} + \sum_{n=n_2-m+1}^{+\infty} \Big) \bP(\bS_n = \x) \, .
\end{equation}

For the first term, we split it according to whether $n < n_1-m$ or $n\geq n_1-m$.  For $n \in (n_1-m, n_1+m)$, and  since $n \leq n_2-m_2$, we have  that $|x_2-b_n^{(2)}| \geq c m \mu_2(a_{n_2}^{(2)})$ (see~\eqref{remarquex-b}):  Theorem~\ref{thm:locallimit2} gives that
\begin{align*}
\bP(\bS_n = \x) \leq &\frac{C}{a_n^{(1)}\vee |x_1- b_n^{(1)} |} \\
& \times \Big(  n_2 \gp_{2}\big( m\mu_2(a_{n_2}^{(2)}) \big)  \big( m \mu_2(a_{n_2}^{(2)}) \big)^{-(1+\gamma_{2})}  + \frac{1}{a_n^{(2)}} e^{- c  (m \mu_2(a_{n_2}^{(2)}) /a_n^{(2)})^2} \ind_{\{\ga_{1}=2\}} \Big) \, . 
\end{align*}
Then, by \eqref{sumlog1}, and using that $n_2 \gp(a_{n_2}^{(2)}) (a_{n_2}^{(2)})^{-(1+\ga_2)} \leq  C/a_{n_2}^{(2)}$ (together with Potter's bound), we obtain for any $\gd>0$
\begin{align*}
\sum_{n=n_1-m}^{n_1+m} \bP&(\bS_n = \x) \\
&\leq  \frac{C_{\gd} \log(m/m_1)}{a_{n_2}^{(2)}\mu_{1}(a_{n_1}^{(1)})}  \times  \Big(   \Big( \frac{m}{m_2}\Big)^{- (1+\ga_2)+\gd} \big( m \mu_2(a_{n_2}^{(2)}) \big)^{\ga_2-\gamma_{2}}  +  e^{- c (m /m_2)^2 } \ind_{\{\ga_{2}=2\}} \Big)\, .
\end{align*}
Notice that $\ga_2=\gamma_2$ if $\ga_2 \in (0,2)$: we can rewrite the above as
\begin{align}
\label{finterm1}
\sum_{n=n_1-m}^{n_1+m} \bP(\bS_n = \x) \leq & \frac{C'_{\gd} \log(m/m_1)}{a_{n_2}^{(2)}\mu_{1}(a_{n_1}^{(1)})}  \times   \Big( \frac{m}{m_2}\Big)^{- (1+\ga_2)+\gd} \times R^{(2)}(m) \, ,
\end{align}
where we set $R^{(i)}(m) =1$ if $\ga_i\in (0,2)$ and $R^{(i)}(m) =m^{2-\gamma_i}  + e^{- c' (m/m_i)^2 }$ if $\ga_i=2$ (in which case $\mu_i(a_{n_i}^{(i)})$ is a constant).

For the terms with $n\leq n_1-m$, we use again Theorem~\ref{thm:locallimit2}: since $|x_1- b_n^{(1)}| \geq c m \mu_1(a_{n_1}^{(1)})$ (see~\eqref{remarquex-b}), and setting $j = n_2-n$ so that $|x_{2}-b_n^{(2)}| \geq c j \mu_2(a_{n_2}^{(2)})$, we have 
\begin{align*}
\bP&(\bS_n =\x) \\
&\leq \frac{C}{m \mu_2(a_{n_2}^{(2)})} \times \Big( n \gp_1(j \mu_1(a_{n_1}^{(1)})) \big(j \mu_1(a_{n_1}^{(1)}) \big)^{- (1+\gamma_{1})} + \frac{1}{a_{n}^{(1)}} e^{- c (j \mu_1(a_{n_1}^{(1)})/a_n^{(1)})^2}\ind_{\{\ga_1=2\}} \Big) \, .
\end{align*}
Therefore, summing over $j$ as done in \eqref{conclusionalpha1-4}-\eqref{conclusionalpha1-5} (in the case $\ga_1=1$, the case $\ga_1>1$ is identical), we obtain as in \eqref{finterm1}
\begin{align}
\sum_{n=1}^{n_1- m} \bP(\bS_n =\x)& \leq \frac{C}{m \mu_2(a_{n_2}^{(2)})}   \Big( \frac{n_1}{\mu_1(a_{n_1}^{(1)})} \gp_1(m \mu_1(a_{n_1}^{(1)})) \big( m\mu_1(a_{n_1}^{(1)}) \big)^{-\gamma_1}  +   e^{- c (m/m_1)^2} \ind_{\{\ga_1=2\}} \Big) \notag \\
& \leq  \frac{C_{\gd}}{a_{n_2}^{(2)} \mu_1(a_{n_1}^{(1)})}   \Big(\frac{m}{m_2}\Big)^{-1}  \Big( \frac{m}{m_1} \Big)^{-\ga_1+\gd}  \times R^{(1)}(m)\, .
\label{finterm2}
\end{align}

\smallskip
Similarly, we can treat the cases $n_2- m\le n \leq n_2+m $ and $n\geq n_2+m$, and we get that 
\begin{align*}
\sum_{n=n_2-m}^{n_2+m} \bP(\bS_n =\x)& \leq \frac{C'_{\gd} \log(m/m_2)}{a_{n_1}^{(1)}\mu_{2}(a_{n_2}^{(2)})}    \Big( \frac{m}{m_1}\Big)^{-(1+\ga_1) +\gd} \times R^{(1)}(m) \, ,  \\
\sum_{n=n_2+m}^{+\infty} \bP(\bS_n =\x)& \leq \frac{C'_{\gd}}{a_{n_1}^{(1)} \mu_2(a_{n_2}^{(2)})} \Big( \frac{m}{m_1}\Big)^{-1}   \Big( \frac{m}{m_2} \Big)^{-\ga_2+\gd}   \times R^{(2)}(m)\, .
\end{align*}
Notice that $a_{n_1}^{(1)} \mu_2(a_{n_2}^{(2)}) = a_{n_2}^{(2)} \mu_1(a_{n_1}^{(1)}) \times (m_1/m_2)$, so that we can re-write the above as 
\begin{align}
\label{finterm3}
\sum_{n=n_2-m}^{n_2+m} \bP(\bS_n =\x)& \leq \frac{C \log(m/m_2)}{a_{n_2}^{(2)}\mu_{1}(a_{n_1}^{(1)})} \Big( \frac{m}{m_2} \Big)^{-1}   \Big( \frac{m}{m_1}\Big)^{-\ga_1 +\gd}  R^{(1)}(m)\, ,  \\
\sum_{n=n_2+m}^{+\infty} \bP(\bS_n =\x)& \leq \frac{C}{a_{n_2}^{(2)} \mu_1(a_{n_1}^{(1)})}   \Big( \frac{m}{m_2} \Big)^{-(1+\ga_2)+\gd}  R^{(2)}(m) \, .
\label{finterm4}
\end{align}

All together, combining \eqref{finterm1}-\eqref{finterm2} and \eqref{finterm3}-\eqref{finterm4}, and bounding $\log(m/m_i)$ by a constant times $ (m/m_i)^{\gd}$, we can conclude that 
\begin{align*}
G(\x)  \leq \frac{C''_{\gd}}{a_n^{(2)} \mu_1(a_{n_1}^{(1)})} \Big(\frac{m}{m_2}\Big)^{-1+\gd} \Big( \frac{m}{m_1}\Big)^{\gd} \times \bigg\{ \Big( \frac{m}{m_1}\Big)^{-\ga_1}  R^{(1)}(m) +   \Big( \frac{m}{m_2} \Big)^{-\ga_2}  R^{(2)}(m)  \bigg\}\, ,
\end{align*}
which concludes the proof of \eqref{casII-IIIawaybi}, recalling that we chose $m= (n_2-n_1)/2$.

\subsubsection{The case $n_2\geq 2 n_1$}

Again, the case  $n_2\leq n_1/2$ would be treated symmetrically.
When $n_2\geq 2 n_1$, we choose $m=3n_2/4$ if $\gamma_1\leq \gamma_2$, and $m= n_2^{\gamma_2/\gamma_1} \vee (3 n_1/2)$ if $\gamma_1>\gamma_2$. We write
\begin{equation}
\label{allez}
G(\x) = \Big( \sum_{n=1}^{m} + \sum_{n=m+1}^{+\infty}\Big) \bP(\bS_n=\x)\, .
\end{equation}

For the first term, since $n\le m \leq 3n_2/4$, for any $\gd'$ there is a constant $c_{\gd'}$ such that $|x_2- b_n^{(2)}| \geq   c_{\gd'} n_2^{1-\gd'}$, so that Theorem~\ref{thm:locallimit2} gives that for any $\gd>0$ we have a constant $c_{\gd}$ such that, for $n\le m$,
\[
\bP(\bS_n=\x) \leq \frac{C_{\gd}}{ a_n^{(1)} \vee |x_1 - b_n^{(1)}|} \times \Big( m  \times (n_2)^{-(1+\gamma_2) +\gd}  + \frac{1}{a_{n_2}^{(2)}} e^{- c (n_2^{1-\gd'} / a_{n_2}^{(2)})^2 } \ind_{\{\ga_2=2 \}}\Big) \, .
\] 
Here, the second term decays faster than any power of $n_2$ (if the term is present, $a_{n_2}^{(2)}$ is regularly varying with exponent $1/2$), so it is negligible compared to the first term.
Then summing over $n\le m$ and using \eqref{sumlog1}, we obtain that
\begin{align}
\label{fin2term1}
\sum_{n=1}^m  \bP(\bS_n=\x) \leq   C'_{\gd} m^{1+\gd}  (n_2)^{-(1+\gamma_2) +\gd}\, .
\end{align}

For the second term in \eqref{allez}, 
we consider three parts: $m< n < 3n_2/4$; $ 3n_2/4 \leq n \leq 2n_2 $; and $n > 2n_2$.
For $m < n \leq  3n_2/4$, we use again that $|x_2 - b_n^{(2)}| \geq c_{\gd'} n_2^{1-\gd'}$ and  $|x_1- b_n^{(1)}| \geq c b_n^{(1)}$ (recall $m\geq 3n_1/2$), so that Theorem~\ref{thm:locallimit2} again gives that for any $\gd>0$
\[
\bP(\bS_n=\x) \leq C_{\gd} n_2^{-1+\gd} \times \Big( n (b_n^{(1)})^{-(1+\gamma_1)+\gd}  + \frac{1}{a_n^{(1)}} e^{- c  (b_n^{(1)} / a_{n}^{(1)} )^2} \ind_{\{\ga_1=2 \}}\Big) \leq   C_{\gd} n_2^{-1+ \bar \gd} \times n^{-\gamma_1-\gd}\, .
\]
For the last inequality, we discarded the exponential term since it decays faster than any power in $n$, and used that  $n\geq m \geq n_2^{1\wedge (\gamma_2/\gamma_1)}$ to bound $n_2^{\gd} n^{2\gd}$ by $n_2^{\bar\gd} n^{-\gd}$ (by picking $\bar \gd= \gd+ 3\gd (1\wedge (\gamma_2/\gamma_1))$) .
Summing over $n \geq m$, and since $\gamma_1+\gd>1$, we get that
\begin{align}
\label{fin2term2}
\sum_{n=m}^{3n_2/4}  \bP(\bS_n=\x)  \leq C'_{\gd} n_2^{- 1+\bar \gd} m^{1-\gamma_1} \, .
\end{align}

For $3n_2/4 \le n \leq 2n_2$, we use that $|x_1- b_n^{(1)}| \geq c n_2^{1-\gd}$ to get by Theorem~\ref{thm:locallimit2} that (discarding the exponential term as above)
\[ \bP(\bS_n=\x) \leq  \frac{C_{\gd} }{a_n^{(1)}\vee |x_2-b_n^{(2)}|} \times n_2 \, (n_2)^{- (1+\gamma_1) +\gd} \, , \]
so that, summing over $n$, \eqref{sumlog1} gives that $\sum_{n=3n_2/4}^{2 n_2} \bP(\bS_n =\x) \leq C'_{\gd} n_2^{-\gamma_1 +2\gd} $ which is smaller than a constant times $ n_2^{-1 +2\gd} m^{1-\gamma_1}$ since $m \leq  n_2$ (and $\gamma_1\geq 1$).

For $n > 2 n_2$, we use that both $|x_1-b_n^{(1)}|$ and $|x_2- b_n^{(2)}|$ are larger than $c_{\gd'} n^{1-\gd'}$: Theorem~\ref{thm:locallimit2} gives that $\bP(\bS_n =\x) \leq C n^{- (1+\gamma_2) + \gd}$. Then, summing over $n\geq 2n_2$ gives that
$\sum_{n> 2n_2} \bP(\bS_n =\x) \leq C_{\gd} n_2^{-\gamma_2 +2\gd} \leq  C'_{\gd} n_2^{-1 +2\gd} m^{1-\gamma_1}$.

To conclude the argument, we get that $G(\x)$ is bounded by a constant times
\[
 m^{1+\gd} n_2^{-(1+\gamma_2) +\gd} + n_2^{-1 +\gd}  m^{1-\gamma_1}
\leq C
\begin{cases}
n_1 \times n_2^{-(1+\gamma_2) +\gd'} &  \tif n_2^{\gamma_2/\gamma_1} \leq n_1 \, , \\
n_2^{-\gamma_2 + (\frac{\gamma_2}{\gamma_1}-1) +\gd'} &  \tif \gamma_1>\gamma_2 , n_2^{\gamma_2/\gamma_1} > n_1 \, ,\\
n_2^{-\gamma_2 +\gd'} &  \tif \gamma_1\leq \gamma_2\, ,
\end{cases}
\]
where we used the definition of $m$ in the last inequality (indeed, the optimal $m$ is $n_2^{\gamma_2/\gamma_1}$ but we have the additional conditions that $m\geq n_1$ and $n\leq n_2$).
This concludes the proof of \eqref{casII-IIIawaybii}.

\subsubsection*{About Assumption~\eqref{eq:Will}}
Let us explain briefly why assuming~\eqref{eq:Will} does not improve much the bounds \eqref{balancedawayga>1}-\eqref{balancedawayga=1}---we consider the case when $n_1\leq n_2\leq 2 n_1$. Set again $m= n_2-n_1$.
For
$n\leq n_2- m$ ($= n_1+m$) we have that 
$|b_n^{(2)} -x_2| \geq c (n- n_2) \mu_2(a_{n_2}) $ so that Theorem~\ref{thm:locallimit3} gives that  $\bP(\bS_n=\x)$ is bounded by a constant times
\[n_2 \gp\big( (n- n_2) \mu_2(a_{n_2}) \big) \big( (n- n_2) \mu_2(a_{n_2}) \big)^{- (2+\gamma)} + \frac{1}{(a_n)^2}e^{- c((n- n_2)\mu_2(a_{n_2}) /a_n)^2} \ind_{\{\ga=2\}}\, . \]

Hence, summing over $j = n_2-n $ between $m$ and $n_2$, and using that $n_2 \leq \gp(a_{n_2}) (a_{n_2})^{-\ga}$ (and $m_2=a_{n_2}/\mu_2(a_{n_2})$) we get that
\begin{align*}
\sum_{n=0}^{n_1 +m} \bP(\bS_n = \x)  &\leq \frac{C}{\mu_2(a_{n_2})}  n_2 \gp\big( m \mu_2(x_2) \big)  \big( m \mu_2(a_{n_2}) \big)^{- (1+\gamma)} + \frac{C  }{a_{n_2} \mu_2(a_{n_2})}e^{- c (m/a_{n_2})^2} \ind_{\{\ga=2\}} \, \\
& \leq \frac{C}{ a_{n_2} \mu_2(a_{n_2})}   \Big( \Big( \frac{m}{m_2}\Big)^{- (1+\ga)+\gd} (m)^{\ga-\gamma} + e^{- c (m/m_2)^2} \ind_{\{\ga=2\}} \Big)\, , 
\end{align*}
which gives the upper bound in \eqref{finterm1} up to the factor $\log (m/m_1)$. Similar bounds hold for the other terms, showing that the use of Theorem~\ref{thm:locallimit3} does not bring any real improvement.

\subsection{Case II-III (b), proof of Theorem~\ref{thm:casII-IIIa}}
\label{sec:casII-IIIawaya}

Here, we assume that  $b_n^{(i_1)}\equiv 0$  and that either $\ga_{i_1} \geq 1$ and $\mu_{i_0}\in \bbR^*$, or $\ga_{i_1}=1$  with $p_{i_0}\neq q_{i_0}$.
Recall the definitions~\eqref{def:ni} of $n_i$: in particular, $n_{i_0}\sim x_{i_0} /\mu_{i_0} (a_{n_{i_0}}^{(i_0)})$ (we work in the case where both $x_{i_0}$ and $\mu_{i_0} (a_{n_{i_0}}^{(i_0)})$ are positive), and $a_{n_{i_1}}^{(i_1)} \sim |x_{i_1}|$. We also define $m_{i_0}:= a_{n_{i_0}}^{(i_0)}/\mu_{i_0}(a_{n_{i_0}}^{(i_0)})$ as above.
The case where $n_{i_1}/n_{i_{0}}$ is bounded away from $0$ and $+\infty$ corresponds to the favorite scaling \eqref{def:favdir}.

\subsubsection{The case $n_{i_1}/n_{i_0}$ small}
\label{sec:casII-IIIawaya1}
Let us start with the case when $n_{i_1} \leq c n_{i_0}$ with $c$ a small constant.
We split $G(\x)$ into 
\begin{equation}
\label{cas1:2parts}
G(\x) = \Big( \sum_{n=1}^{ n_{i_0}/2 } + \sum_{n= n_{i_0}/2 +1}^{+\infty}
 \Big) \bP(\bS_n =\x) \, .
\end{equation}

For the second term, we can use the results of Section~\ref{sec:casII} for the case $\ga_{i_0}>1$ (combine \eqref{conclusion:main}-\eqref{conclusionlargen1}-\eqref{conclusionsmalln1}) and of Section~\ref{sec:casIII} for the case $\ga_{i_0}=1$ (combine \eqref{conclusionmainga=1}-\eqref{conclusionalpha1-3}-\eqref{lastsplitga=1}-\eqref{conclusionalpha1-4}).
We get
\begin{align}
\label{eq:casII-IIIaway1111}
\sum_{n= n_{i_0}/2 +1}^{+\infty} \bP(\bS_n =\x) \leq  \frac{C}{  \mu_{i_0}(a_{n_{i_0}}^{(i_0)}) a_{n_{i_0}}^{(i_1)} } \, .
\end{align}
Recall that if  $\mu_{i_0} \in \bbR_+^*$, then we can replace $ \mu_{i_0}(a_{n_{i_0}}^{(i_0)})$ by a constant.

For the first term in \eqref{cas1:2parts}, we use that for $n\leq n_{i_0}/2$ we have that  $x_{i_0} - b_n^{(i_0)} \geq x_{i_0}/4 \geq a_n^{(i_0)}$ so that Theorem~\ref{thm:locallimit2} gives 
\begin{align}
\bP(\bS_n = \x) & \leq \frac{C}{|x_{i_1}| \vee a_n^{(i_1)}}  \Big(  n \gp_{i_0}( x_{i_0}) x_{i_0}^{- (1+\gamma_{i_0})} +  \frac{1}{a_n^{(i_0)}} e^{- c x_{i_0} / a_n^{(i_0)}} \ind_{\{ \ga_{i_0}=2\}}\Big) \notag\\
& \leq \frac{C'}{|x_{i_1}| \vee a_n^{(i_1)}}  \times  n \gp_{i_0}( x_{i_0}) x_{i_0}^{- (1+\gamma_{i_0})} \, .
\label{bornebonne}
\end{align}
We discarded the second (exponential) term since it decays faster than any power of $n_{i_0}$.
Summing over $n$, we get that (recall $|x_{i_1}| = a_{n_{i_1}}^{(i_1)}$)
\begin{align}
\sum_{n=1}^{n_{i_0}/2} \bP(\bS_n =\x) &\leq C \gp_{i_0}( x_{i_0}) x_{i_0}^{- (1+\gamma_{i_0})} \Big( \sum_{n=1}^{n_{i_1}} \frac{n}{|x_{i_1}|} + \sum_{n=n_{i_1}+1}^{n_{i_0}} \frac{n}{a_n^{(i_1)}} \Big) \notag \\
&\leq  \gp_{i_0}( x_{i_0}) x_{i_0}^{- (1+\gamma_{i_0})}  \times 
\begin{cases}
C_{\gd}\, n_{i_1}^{2- 1/\ga_{i_1} } n_{i_0}^{\gd} & \ \tif \ga_{i_1} \leq 1/2 , \\
C\, n_{i_0}^2 /a_{n_{i_0}}^{(i_1)} & \ \tif \ga_{i_1} >  1/2 .
\end{cases}
\label{away:smalln}
 \end{align}
We used that $n/a_n^{(i_1)}$ is regularly varying with exponent $1- 1/\ga_{i_1}$, which is smaller (resp.\ larger) than $-1$ if $\ga_{i_1} >1/2$ (resp.~if $\ga_{i_1}<1/2$).

Notice that since  $n_{i_0} \sim x_{i_0}/ \mu_{i_0}(a_{n_{i_0}}^{(i_0)})$, we get that
\begin{equation}
\label{ga>12}
 \gp_{i_0}( x_{i_0}) x_{i_0}^{- (1+\gamma_{i_0})} \frac{n_{i_0}^2}{a_{n_{i_0}}^{(i_0)}} \leq  \frac{C}{a_{n_{i_0}}^{(i_1)} \mu_{i_0}(x_{i_0})}  \frac{\gp_{i_0}(x_{i_0})}{\mu_{i_0}(a_{n_{i_0}}^{(i_0)})} (x_{i_0})^{1-\gamma_{i_0}} \leq \frac{C}{ \mu_{i_0}(a_{n_{i_0}}^{(i_0)}) a_{n_{i_0}}^{(i_1)} } \, .
\end{equation}
Indeed, either $\gamma_{i_0}>1$, or $\gamma_{i_0}=1$ in which case  $\gp_{i_0}(x_{i_0})/\mu_{i_0}(x_{i_0}) \to 0$ as $x_{i_0}\to +\infty$. 
Together with~\eqref{eq:casII-IIIaway1111}, this gives~\eqref{casIaway}.

%

\subsubsection*{The case of a renewal process}

If $\bS_n$ is a renewal process (necessarily $\ga_{i_1}<1$),  we can improve the bound \eqref{casIaway}.
For a fixed $\gd>0$, we set $m:= n_{i_1} \times (n_{i_0}/n_{i_1})^{\gd}$, which is larger than $n_{i_1}$, but smaller than $n_{i_0}/2$. We write
\begin{align*}
G(\x) = \Big( \sum_{n=1}^{m} + \sum_{n=m+1}^{+\infty} \Big) \bP(\bS_n = \x) \, .
\end{align*}

The first term is treated as above, using \eqref{bornebonne} (note also that $x_{i_1} \vee a_{n}^{(i_1)} \geq c a_{n_{i_1}}^{(i_1)}$):
\begin{align*}
\sum_{n=1}^{m}\bP(\bS_n = \x) &\leq C \frac{m^2}{a_{n_{i_1}}^{(1)}} \gp_{i_0}(x_{i_0}) x_{i_0}^{-(1+\gamma_{i_0})} \leq C_{\gd}\, (n_{i_1})^{2- \frac{1}{\ga_{i_1}}} x_{i_0}^{-(1+\gamma_{i_0})+2\gd} \, ,
\end{align*}
where we used that $m:= n_{i_1} \times (n_{i_0}/n_{i_1})^{\gd}$, together with  $n_{i_0}\leq x_{i_0}^{1+\gd}$, and also $a_n^{(i_1)} \geq n_{i_1}^{1/\ga_{i_1} -\gd}$.

For the remaining term, we use that, 
\begin{align*}
\sum_{n>m}\bP(\bS_n = \x) & = \bP\big( \exists\ n>m \text{ such that } \bS_n =\x \big)  \leq \bP \big( S_{m}^{(i_1)}  \leq x_{i_1} \big) \\
& \leq \exp\big( - c (a_m^{(i_1)} / a_{n_{i_1}}^{(i_1)} )^{- \zeta_{i_1} } \big) \leq  \exp\big( - \big( n_{i_0}/n_{i_1} \big)^{\zeta_{\gd}} \big)\, .
\end{align*}
for some $\zeta_{\gd}>0$.
The second line follows from the large deviation result~\cite[Lemmas~A.3]{cf:AB} (we have $x_{i_1}\geq   a_{m}^{(i_1)}/2$), with an exponent $\zeta_{i_1}$ that depends on $\ga_{i_1}<1$.
The second inequality comes from the definition of $m= n_{i_1} \times (n_{i_0}/n_{i_1})^{\gd}$.

\subsubsection{The case $n_{i_1}/n_{i_0}$ large}
\label{sec:casII-IIIawaya2}

Let us take some  $m \in (2 n_{i_0},n_{i_1}/2)$ (the choice is optimized below), and we write
\begin{equation}
\label{eq:awayII-IIIa2}
G(\x) = \Big( \sum_{n=1}^{m} + \sum_{n= m+1}^{+\infty}  \Big) \bP(\bS_n =\x) \, .
\end{equation}
For the first term, we us that $x_{i_1}-b_n^{(i_1)} \geq c x_{i_1} \geq a_n^{(i_1)}$ ($n\leq n_{i_1}/2$), so that thanks to Theorem~\ref{thm:locallimit2} we have
\begin{align}
\bP(\bS_n = \x) \leq \frac{C}{ |x_{i_0}- b_{n}^{(i_0)}| \vee a_n^{(i_0)}} \times \frac{1}{x_{i_1}}  \Big( m \gp_{i_1}(x_{i_1}) x_{i_1}^{-\gamma_{i_1}} + e^{ - c (x_{i_1} / a_{n}^{(i_1)})^2 } \ind_{\{\ga_{i_1}=2\}} \Big)\, .
\label{eq:casII-IIIawaya2111}
\end{align}
Therefore, summing over $n\leq m$ and using \eqref{sumlog1} above, we get that
\begin{align}
\sum_{n=1}^m \bP(\bS_n =\x)
&\leq \frac{C \log m }{ \bar \mu_{i_0}(n_{i_0}, m)   } \,  \frac{1}{x_{i_1}} \Big( m \gp_{i_1}(x_{i_1}) x_{i_1}^{-\gamma_{i_1}} + e^{ - c (x_{i_1} / a_m^{(i_1)})^2 }\ind_{\{\ga_{i_1}=2\}}\Big)\, ,
\label{awayII-IIIa211}
\end{align}
with $ \bar \mu_{i_0}(n_{i_0}, m) :=  \mu_{i_0}(a_{n_{i_0}}^{(i_0)}) \wedge \mu_{i_0}(a_m^{(i_0)}) \geq n_{i_0}^{-\gd} m^{-\gd/2}$.

For the second term in \eqref{eq:awayII-IIIa2},  we use that $|x_{i_0}-b_n^{(i_0)}| \geq c b_n^{(i_0)}$ for $n > 2n_{i_0}$, so that Theorem~\ref{thm:locallimit2} gives us
\[
\bP(\bS_n = \x) \leq \frac{C}{x_{i_1} \vee a_n^{(i_1)}}  n \gp_{i_0}( b_n^{(i_0)} ) (b_n^{(i_0)})^{-(1+\gamma_{i_0})}  \, .
\]
As in \eqref{bornebonne}, we discarded the exponential term appearing in the case $\ga_{i_0}=2$, since it decays faster than any power of $n$.
Therefore, in the case $\gamma_{i_0}>1$, we have
\begin{equation}
\sum_{n=m+1}^{+\infty} \bP(\bS_n = \x)  \leq \frac{C}{a_{n_{i_1}}^{(i_1)}} \sum_{n\geq m} \gp_{i_0}( n ) n^{-\gamma_{i_0}}   \leq \frac{C}{a_{n_{i_1}}^{(i_1)}} \gp_{i_0}(m) m^{1-\gamma_{i_0}}\, . 
\label{awayII-IIIa212}
\end{equation}
For the case $\gamma_{i_0}=1$, we refer to the bound \eqref{generalbound} below: we find that the sum is bounded by a constant times $(a_{n_{i_1}}^{(i_1)} \mu_{i_0}(a_{n_{i_0}}^{(i_0)}))^{-1}$.

Combining \eqref{awayII-IIIa211} and \eqref{awayII-IIIa212}, and using $x_{i_1}=a_{n_{i_1}}^{(i_1)}$, we get that in the case $\gamma_{i_0}>1$
\begin{align}
\label{notgeneralbound}
G(\x) & \leq \frac{C_{\gd}}{a_{n_{i_1}}^{(i_1)}} \times  \Big( m^{1-\gamma_{i_0} +\gd}  +  m^{\gd} n_{i_0}^{\gd}\,   \big( m  x_{i_1}^{-\gamma_{i_1} +\gd}  +   e^{- c (x_{i_1} / a_{m}^{(i_1)})^2 } \ind_{\{ \ga_{i_1}=2 \} }  \big) \Big) \, .
\end{align}
Then, we are left with the choice of $m$: we take $m\leq n_{i_1}^{1-\gd}$ so that the last term decays faster than any power of $m$, and we only have to optimize $m^{1+\gd}  \times (m^{-\gamma_{i_0}} + x_{i_1}^{-\gamma_1} (n_{i_0} x_{i_1})^{\gd})$.
We therefore choose $m \geq  (x_{i_1})^{\gamma_{i_1}/\gamma_{i_0}} \vee n_{i_0}$. In the end, we obtain that the parenthesis in \eqref{notgeneralbound} is bounded by
$m^{1+\gd}  (n_{i_0} x_{i_1})^{\gd} x_{i_1}^{-\gamma_{i_1}} \leq m_{\gd'} x_{i_1}^{-\gamma_{i_1}}$, where we have put $m_{\gd'}= (|x_{i_1}|^{\gamma_{i_1}/\gamma_{i_0} +\gd'} \vee (n_{i_0})^{1+\gd'}) \wedge (n_{i_1})^{1-\gd'}$
for some $\gd'$ slightly larger than $\gd$
(if $(n_{i_1})^{1-\gd'} < (n_{i_0})^{1+\gd'}$ then take $m_\gd=+\infty$). 
This gives~\eqref{casIawayii}.

\smallskip
Alternatively, we also obtain from Theorem~\ref{thm:locallimit2} that for any $n$ (using  $x_{i_1} \vee a_n^{(i_1)} \geq a_{n_{i_1}}^{(i_1)}$)
\begin{align}
\bP(\bS_n = \x) &\leq \frac{C}{a_{n_{i_1}}^{(i_1)}}  \times \frac{1}{a_n^{(i_0)} \vee |x_{i_0} - b_n^{(i_0)}| }  \notag \\
&\times  \Big( n \gp_{i_0}( |x_{i_0} - b_n^{(i_0)}|)|x_{i_0} - b_n^{(i_0)}|^{-\gamma_{i_0}}  + e^{- c (|x_{i_0} - b_n^{(i_0)}| /a_n^{(i_0)})^2} \ind_{\{\ga_{i_0}=2\}}\Big) \,
\label{eq:casII-IIIawaya21112}
\end{align}
Summing over $n$, and treating the different parts of the sum  according to whether $n\leq n_{i_0} - m_{i_0}$, $n\in (n_{i_0}- m_{i_0}, n_{i_0}+m_{i_0})$ (in which range $|x_{i_0} - b_n^{(i_0)}|\leq a_n^{(i_0)}$) or $n\geq n+m_{i_0}$, we obtain as in Sections~\ref{sec:casII}-\ref{sec:casIII} that
\begin{equation}
\label{generalbound}
\sum_{n=1}^{+\infty}\bP(\bS_n = \x) \leq \frac{C}{a_{n_{i_1}}^{(i_1)}}  \times \frac{1}{\mu_{i_0}( a_{n_{i_0}}^{(i_0)})}\, . 
\end{equation}
This gives a general bound, in case the above~\eqref{notgeneralbound} does not give a satisfactory bound (for instance if $m_{\gd'}=+\infty$).

\subsubsection*{The case of a renewal process}

In that case, we have that $\bP(\bS_n =\x) = 0$ for all $n\geq x_{i_0} \sim n_{i_0} \mu(a_{n_{i_0}}^{(i_0)})$ (notice that $\mu_{i_0}(x)\geq 1$).
Hence, we have that 
\[G(\x) = \sum_{n=1}^{2 n_{i_0} \mu_{i_0}(a_{n_{i_0}}^{(i_0)})} \bP(\bS_n = \x) \leq  \frac{C \log n_{i_0}}{\mu_{i_0}(a_{n_{i_0}}^{(i_0)}) a_{n_{i_1}}^{(i_1)}} \Big( n_{i_0} \gp_{i_1}(x_{i_1}) x_{i_1}^{-\gamma_{i_1}} + e^{- c (a_{n_{i_1}}^{(i_1)}/a_{n_{i_0}}^{(i_1)} )^2}  \ind_{\{\ga_{i_1}=2\}}\Big)   \, ,\]
where we used \eqref{awayII-IIIa211} with $m= 2 n_{i_0} \mu_{i_0}(a_{n_{i_0}}^{(i_0)})$, and  also that $\mu_{i_0}(a_{n_{i_0}}^{(i_0)}) \geq \mu_{i_0}(a_m^{(i_0)})$ since $\mu$ is non-decreasing. Noting that $a_{n_{i_1}}^{(i_1)}/a_{n_{i_0}}^{(i_1)}$ is bounded below by $(n_{i_1}/n_{i_2})^{(1-\gd)/2}$ in the case $\ga_{i_1}=2$, 
this gives the bound in Theorem~\ref{thm:casII-IIIa}.

\subsection*{Acknowledgment}
The author wishes to thank Ron Doney for useful comments and for pointing out some references.


\begin{appendix}

\section{Generalized domains of attraction  and multivariate regular variation}
\label{appA}

\subsection{A few words on generalized domains of attraction}
We stress that the convergence \eqref{cf:stableconv} is  a special case of generalized domains of attractions (called operator-stable distributions): in general, the renormalization matrix $A_n$ is invertible, and does not need to be diagonal as in our case.  A few relevant and historical references are Sharpe \cite{cf:Sharpe69}, Hudson \cite{cf:Hud80}, Hahn and Klass \cite{cf:HK79, cf:HK85}, and a comprehensive overview of the subject can be found in \cite{cf:MSbook}. We stress that a local limit theorem exists in general, see~\cite{cf:Grif}.
We also mention that when $A_n$ is diagonal, all marginals $X^{(i)}$ are in the domain of attraction of an $\ga_i$-stable distribution, which is not necessarily the case in operator-stable distributions, cf.\ \cite{cf:MS98}.

Sharpe \cite{cf:Sharpe69} found that one can decompose a multivariate (operator-)stable distribution into the  product of two marginals, one normal and one strictly non-normal. In our setting, it means that if we set $d_0 =\max\{i;\ga_i=2\}$, then the stable law $\bZ$ in \eqref{cf:stableconv} has two independent components: $(Z_1,\ldots, Z_{d_0})$ normal, and $(Z_{d_0+1}, \ldots, Z_d)$ strictly non-normal, the convergence of these two marginals being enough for the joint convergence see \cite{cf:RG79, cf:Meer91}. Then, we refer to \cite{cf:MSbook} for a characterization of the convergence to an operator-stable distribution (either normal or strictly non-normal),
 in terms of regular variation in $\bbR^d$ of the distribution of $\bX_1$ (this is a generalization of Feller conditions  \cite[\S\ IX.8]{cf:Feller}, \textit{i.e.}~\eqref{hyp:XY}, to the multivariate case).

In the simpler case we are interested in, that is when the matrix $A_n$ is diagonal, Resnick and Greenwood \cite{cf:RG79} (resp.~Haan Omey and Resnick \cite{cf:HOR84}) first gave a characterization of the domains of attraction in dimension~$2$ (resp.~$d$), with the help of a (simpler) theory of regular variation in $\bbR^d$. 
We summarize it below, but we first recall the definition of regularly varying function in $\bbR^d$, as introduced in \cite{cf:RG79,cf:HOR84}.

\subsection{About regular variation in $\bbR^d$, and convergence to stable distributions}

The theory of regular variations in $\bbR$ is well established, and an exhaustive and seminal reference is \cite{cf:BGT}.
The study of regular variation in $\bbR^d$ turns out to be very rich, and has also been extensively studied, starting with \cite{cf:RG79,cf:HOR84,cf:Meer88}: we refer to \cite[Part II]{cf:MSbook} and references therein for more details. Here we give a brief (simplified) definition in the special case we are interested in.

\smallskip
First, a function $r:\bbR_+ \to \bbR_+^d$ is said to be regularly varying with exponent $\beta = (\beta_1,\ldots, \beta_d)$ if the components $r_i(\cdot)$ are (one-dimensional) regularly varying functions with respective indices $\beta_i$.
Then, we say that the function $f: \bbR^d \to \bbR$ is regularly varying at $+\infty$ (resp.~$0$), if there exists a regularly varying function $r: \bbR_+ \to \bbR_+^d$ (called auxiliary function) with index $\beta\in \bbR_+^{d}$ (resp.~$ \beta\in \bbR_-^d$), and $\gep \in\{-1,+1\}$ such that
\begin{equation}
\label{def:regvarfun}
 \lim_{t\to +\infty}  \frac{ f\big( r(t)  \x  \big)}{t^{\gep}} = \phi(\x)\,  \qquad \forall \, \x\in\bbR^d \setminus \{0\}\, .
\end{equation}
The function $\phi: \bbR^d \setminus \{0\} \to \bbR^d$ verifies $\phi(\lambda^{\beta} \x) = \lambda^{\gep}  \phi(\x)$ for all $\x\neq \mathbf{0}$ and $\lambda>0$, where we denoted $\lambda^{a} = (\lambda^{a_1}, \ldots, \lambda^{a_d})$. Then, $\rho=\gep \gb^{-1}$ is called the index of regular variation of~$f$.

Similarly, a measure $\pi$ is regularly varying at $+\infty$ (resp.~$0$) if there exists a regularly varying function $r: \bbR_+ \to \bbR_+^d$ with index $\beta\in \bbR_+^{d}$ (resp.~$ \beta\in \bbR_-^d$), and $\gep \in\{-1,+1\}$ such that, 
\begin{equation}
\label{def:regvarmeas}
 \lim_{t\to +\infty}   \frac{\pi \big( r(t) {\rm d}\x \big)}{t^{\gep}} = \varpi({\rm d}\x)\, .
\end{equation}
for some measure $\varpi$ which cannot be supported on any proper subspace of $\bbR^d$, and which verifies $\varpi(\lambda^{\beta} {\rm d}\x) = \lambda^{\gep} \varpi( {\rm d} \x)$. Then $\rho=\gep \gb^{-1}$ is called the index of regular variations of $\pi$.

\smallskip
We are now ready to give a necessary and sufficient condition for $\bS$ to be in the domain of attraction of an $\bga$-stable distribution (in the case of a diagonal $A_n$), as stated in \cite{cf:RG79,cf:HOR84}. Since the convergence of the two marginals $(Z_1,\ldots, Z_{d_0})$ (to a normal law) and $(Z_{d_0+1}, \ldots, Z_d)$ (to a strictly non-normal law) is enough, we state the results in the case where $d_0=d$ or $d_0=0$.
First, $\bS$ is in the domain of attraction of a non-degenerate normal law if and only if the truncated second moment function $\mathfrak{S}(\x):=\bE \big[ \langle \bX_1, \x\rangle^{2} \ind_{\{ |\langle \bX_1, \x \rangle | < 1 \}} \big]$ is regularly varying at $+\infty$ with index $(2,\ldots, 2)$. On the other hand, $\bS$ is in the domain of attraction of a  strictly non-normal law if and only if $\bP_{\bX_1}(\cdot)$ is regularly varying at infinity with index $(\rho_1,\ldots,\rho_d) \in (-2,0)^d$ (with $\rho_i=-\alpha_i$, the scaling sequences being $a_n^{(i)} = r_i(n)$, with $r_i(\cdot)$ defined by \eqref{def:regvarmeas}). We stress that having $\bP(\bX_1 > \x )$ regularly varying at $+\infty$ with index $-(\gamma_1,\ldots, \gamma_d)$ is a sufficient condition  sufficient condition for being in the domain of attraction of an $\bga$-stable distribution, with $\bga = (\ga_1,\ldots, \ga_d)$, $\ga_i=\gamma_i\wedge 2$.

\smallskip 
Finally, let us give two examples of regularly varying distribution of $\bX_1$ ($\bbN^d$-valued) we have in mind, that can be thought as ``fully independent'' and ``fully dependent'' cases---one can easily think about other, intermediate, cases.

\begin{example}
\label{ex:indep}
There are exponents $(\gamma_i)_{1\le i\le  d}$ and regularly varying functions $(\gp_i)_{1\le i\le d}$ such that
\begin{equation}
\bP(\bX_1 \ge \x) = \prod_{i=1}^d \gp_i(x_i) \, x_i^{- \gamma_i}\, , \qquad \text{for } \x\in\bbN^d \, .
\end{equation}
We have  $\bar F_i(x)\sim \gp_i(x_i) \, x_i^{- \gamma_i}$, and $\bP(\bX_1 > \x) $ is regularly varying at $+\infty$ with index $-(\gamma_1,\ldots, \gamma_d)$.
\end{example}
Indeed, if we set $r_i(t)$ such that $\bar F_i( r_i(t) ) \sim t^{-1}$ as $t\to+\infty$,  we have that  $r(t) = (r_1(t), \ldots, r_d(t)) $ is regularly varying with index $(\gamma_1^{-1}, \ldots, \gamma_d^{-1})$. Then, for any $\x \in \bbR^d \setminus \{0\}$, we get
\[\lim_{t\to\infty} t \bP( \bX_1 > r(t) \x ) = \phi(\x) \quad  \text{with}  \quad \phi(\x) = \sum_{i=1}^d  x_i^{-\gamma_i} \ind_{\{ x_j =0 \, \forall j\ne i\}} \, .\]
This shows that $\bbP(\bX_1 > \x)$ is regularly varying with index~$-(\gamma_1,\ldots,\gamma_d)$.

\begin{example}
\label{ex:depend}
There exist positive exponents $\gb,\, (\gb_i)_{1\le i\le d}$, and  slowly varying functions $\psi, \, (\psi_i)_{1\le i\le d}$ such that 
\begin{equation}
\bP(\bX_1 \ge  \x) =  \psi\Big(\sum_{i=1}^d \psi_i(x_i) x_i^{\beta_i} \Big)  \times  \Big(\sum_{i=1}^d \psi_i(x_i) x_i^{\beta_i} \Big)^{-\beta}  \, , \quad \text{for } \x\in\bbN^d \, .
\end{equation}
We have that $\bar F_i(x)$ is regularly varying with index $-\gamma_i := -\beta \beta_i$, and $\bP( \bX_1 \geq  \x)$ is regularly varying at $+\infty$ with index $-(\gamma_1, \ldots, \gamma_d)$.
\end{example}
Again, setting $r_i(t)$ such that $\bar F_i( r_i(t) ) \sim t^{-1}$ as $t\to+\infty$, we have that $r(t)$ is regularly varying with index $(\gamma_1^{-1}, \ldots, \gamma_d^{-1})$.  For any $\x \in \bbR^d \setminus \{0\}$, we then have that
\[ \lim_{t\to\infty} t \, \bP\big( \bX_1 > r(t) \x \big) = \phi(\x) \qquad \text{with } \phi(\x) = \Big( \sum_{i=1}^d  x_i^{\beta_i} \Big)^{-\beta} \, . \]
This shows that $\bbP(\bX_1>\x)$ is regularly varying with index~$-(\gamma_1,\ldots,\gamma_d)$.


\section{Comments on Assumption~\ref{hyp:2}}
\label{appB}

Let us  comment on conditions \eqref{cond:h} in Assumption~\ref{hyp:2}, and in particular 
on the summability of $h_{x_i}^{(i)}(|x_j|) /(1+|x_j|)$.
Indeed, in view of the discussion below Assumption~\ref{hyp:2}, a natural idea would be to simply bound $\bP(X_1^{(j)} \in [x_j,2x_j]\,  \forall j\neq i \mid X_1^{(i)} \in [x_i,2x_i])$ by~$1$, so items (ii)-(iii) in \eqref{cond:h} would not be necessary.
However, by doing so, one would not be able to derive the bound \eqref{hyp:Doney1} for each coordinate (there would be an extra factor $(\log x_i)^{d-1}$), but having \eqref{hyp:Doney1} turns out to be essential in our study.
Similarly, condition (iii) in \eqref{cond:h} may appear artificial, but it is here for technical purposes, in order to be able to bound $h_u^{(i)}(v)$ uniformly for $u$ in some interval (recall the proof of Theorem~\ref{thm:locallimit2} in Section~\ref{sec:localharder}). 
We want to stress here that the condition \eqref{cond:h} is actually very weak, and is verified in natural examples that come to mind.

\subsection*{About Example~\ref{ex2}}

We now show that Assumption~\ref{hyp:2} is verified in the case of Example~\ref{ex2}. For simplicity, we present calculations in the case $d=2$ without slowly varying function ($\psi$ is a constant):  for $\x\in \bbN^2$, 
$\bP(\bX_1= \x ) = c_0 \big( x_1^{\gb_1} + x_2^{\gb_2} \big)^{-\gb}$, with $\gb_1,\gb_2>0$ and $\gb> \gb_1^{-1} +\gb_2^{-1}$, for some constant $c_0$.
We can write, setting $\gamma_1= \gb_1 (\gb - \gb_1^{-1} -\gb_2^{-1})$,
\begin{align}
 \label{rewriteex2}
 \bP \big( \bX_1= (x_1,x_2) \big) & = c_0  x_1^{- (1+\gamma_1)}   x_1^{-\gb_1/\gb_2} \Big ( 1+ \frac{x_2^{\gb_2}}{x_1^{\gb_1}}  \Big)^{-\gb}     \\
 & =  \frac{c_0}{x_2} \  x_1^{- (1+\gamma_1)}  \Big( \frac{x_2^{\gb_2}}{x_1^{\gb_1}} \Big)^{1/\gb_2} \Big ( 1+ \frac{x_2^{\gb_2}}{x_1^{\gb_1}}  \Big)^{-\gb}
\notag
\end{align}
Hence, we have the bound \eqref{eq:hyp2}, with
\begin{equation}
\label{realh}
h^{(1)}_u(v) =\frac{v}{u^{\gb_1 /\gb_2}}  \Big ( 1+  \Big(\frac{v}{u^{\gb_1/\gb_2}} \Big)^{\gb_2}   \Big)^{-\gb} \leq \min \Big\{   \frac{ v}{u^{\gb_1/\gb_2}} ,  \Big( \frac{v}{u^{\gb_1/\gb_2}}\Big)^{1-\gb\gb_2} \Big\},
\end{equation}
the last inequality coming from considering whether $v$ is smaller or larger than $u^{\gb_1/\gb_2}$.
It remains to verify that $h_u^{(1)}(v)$ verify \eqref{cond:h}: for item (i), one easily verifies that $h_u^{(1)}(v)$ is bounded by $1$, since $\gb>1/\gb_2$; item (iii) is also trivial. For item (ii), we sum over $v$ depending on whether $v\leq u^{\gb_1/\gb_2}$ or $v>  u^{\gb_1/\gb_2}$
\begin{align*}
\sum_{v\geq 1} \frac{ h^{(1)}_u(v) }{v} \leq \sum_{v=1}^{u^{\gb_1/\gb_2} }  \frac{1}{ u^{\gb_1/\gb_2}} + \frac{1}{u^{(1 -\gb \gb_2) \gb_1/\gb_2}} \sum_{v >u^{\gb_1/\gb_2} }    v^{ -\gb \gb_2 } 
\leq 1+ c\, ,
\end{align*}
and the constant does not depend on $u$ (we have that  $\sum_{v >u^{\gb_1/\gb_2} }    v^{ -\gb \gb_2 } \leq c (u^{\gb_1/\gb_2})^{1-\gb \gb_2}$ since $\gb\gb_2>1$).
Similarly, we have $ \bP \big( \bX_1= (x_1,x_2) \big) = \frac{c_0}{x_1} x_2^{-(1+\gamma_2)} h_{x_2}^{(2)}(x_1)$, with $\gamma_2:=\gb_2 (\gb - \gb_1^{-1}-\gb_2^{-1})$ and  $h_u^{(2)}(v)$ as defined in \eqref{realh} but with $\gb_1$ and $\gb_2$ swapped.

Moreover, using a Riemann sum approximation, we get that 
$\sum_{v\geq 1} h^{(1)}_u(v)/v$ converges to $\int_{0}^{\infty} t (1+t^{\gb_2})^{-\gb}  \mathrm{d}t$ as $u\to +\infty$ (recall~\eqref{realh}).
Going back to \eqref{rewriteex2} and summing over $x_2$, we therefore get that $\bP(X_1^{(1)} =x_1) \sim c_1 x_1^{-(1+\gamma_1)}$ as $x_1\to+\infty$.


This can be generalized to the setting of Example~\ref{ex2}: we get that Assumption~\ref{hyp:2} is verified, and we find that there is a constant $c_2$ (depending only on $\gb,\gb_i$) such that
\begin{equation}
\label{eq:ex1}
\bP (X_1^{(i)} =x_i)   \sim c_2 \, \psi( x_i^{\gb_i}) \times (x_i)^{- (1+\gamma_i)} \qquad  \text{as } x_i\to+\infty\, , 
\end{equation}
with $\gamma_i :=\gb_i \big( \gb - \sum_{i=1}^d \gb_i^{-1} \big)$.
Details are left to the reader.
\end{appendix}

\end{document}